\numberwithin{equation}{section}
\newtheorem{Thm}{Theorem}[section]
\newtheorem{Prop}[Thm]{Proposition}
\newtheorem{remark}[Thm]{Remark}
\newtheorem{Cor}[Thm]{Corollary}
\newtheorem{Lem}[Thm]{Lemma}
\theoremstyle{definition}
\newtheorem{Def}[Thm]{Definition}
\newtheorem{Rmk}[Thm]{Remark}
\newcommand{\mfo}{\mathfrak{o}}
\begin{document}

\title[On the Siegel series in terms of lattice counting]
{On the Siegel series in terms of lattice counting}

\keywords{Siegel series, Gross-Keating invariant, lattice counting}
\subjclass[2020]{Primary 11E08, 11E45; Secondary 11F30, 11E95}

\author[Sungmun Cho]{Sungmun Cho}
\author[Taeyeoup Kang]{Taeyeoup Kang}
\thanks{The authors are supported by  Samsung Science and Technology Foundation under Project Number SSTF-BA2001-04.}

\address{Sungmun Cho \\  Department of Mathematics, POSTECH, 77, Cheongam-ro, Nam-gu, Pohang-si, Gyeongsangbuk-do, 37673, KOREA}

\email{sungmuncho12@gmail.com}

\address{Taeyeoup Kang \\  Department of Mathematics, POSTECH, 77, Cheongam-ro, Nam-gu, Pohang-si, Gyeongsangbuk-do, 37673, KOREA}

\email{taeyeoupkang@gmail.com}

\maketitle

\begin{abstract}
In this paper we describe each coefficient of the Siegel series associated to a quadratic $\mfo$-lattice $L$   in terms of lattice counting problems,  
where $\mfo$ is the ring of integers of a non-Archimedean local field of characteristic $0$. 
Under the restriction that $p$ is odd and that the dimension of  the radical of the quadratic space $L\otimes\kappa$ on the residue field $\kappa$ is  at most $2$, we provide explicit values of coefficients  and reprove the functional equation of the Siegel series. 
\end{abstract}

 \tableofcontents

\section{Introduction}\label{in}

The Siegel series is a central object  in the study of theories of automorphic forms and $L$-functions.
It is related to the Fourier coefficients of the Siegel Eisenstein series and to the construction of automorphic forms by Ikeda lift.
Katsurada in \cite{Kat} firstly obtained the explicit formula of the Siegel series. 
Ikeda and and Katsurada in \cite{IK2} extended this result to any finite field extension of $\mathbb{Q}_p$.
On the other hand, the functional equation of the Siegel series over a finite field extension of $\mathbb{Q}_p$ is obtained by Ikeda in \cite{I1}.
The functional equation of the Siegel series  is a key ingredient to main results of  \cite{Kat} and \cite{IK2}.
For more history  of the Siegel series, we refer to the first few paragraphs of the introduction of \cite{IK2}.

Meanwhile, one of  authors of our manuscript and Yamauchi proposed a new method toward the Siegel series in \cite{CY}:
they interpreted the Siegel series as the volume of a certain $p$-adic manifold and  then analyzed this volume using smoothening method, which is a desingularization of a certain scheme defined over a discrete valuation ring.
Through this method, they found a new description of the Siegel series in terms of weighted lattice counting problems, where each weight is explicitly described.
This interpretation plays an important role in the proof of the Kulda-Rapoport conjecture by Li and Zhang in \cite{LZ1} and \cite{LZ2}.

A main goal of this paper is to develop observation of \cite{CY} to further analyze the Siegel series.
Our work is summarized as follows:
\begin{enumerate}
    \item 
We describe each coefficient of the Siegel series in terms of lattice counting problems without any restriction on $p$ in Theorem \ref{thm:func_eq_coeff}.
Here  the Siegel series is known to be expressed  as a polynomial in $\mathbb{Z}[X]$. 

    \item We  provide explicit values of coefficients of the Siegel series by solving lattice counting problems (cf. Proposition \ref{prop:Sab}), under a certain   restriction in Corollaries \ref{cor:coeffofct}-\ref{cor:coeff_n0=n-2} and in Theorem \ref{thm45}.
Here ``a certain restriction'' will be explained below more precisely.
    
    \item We  reprove the functional equation of the Siegel series under the same restriction in Corollary \ref{cor:functionaleq}. 
Since the Siegel series is expressed by a polynomial, the functional equation means comparison among coefficients. 

\end{enumerate}

We will explain our main results more precisely.
Let $L$ be a quadratic $\mfo$-lattice of rank $n$, where $\mfo$ is the ring of integers of a non-Archimedean local field of characteristic $0$. 
A main contribution of \cite{CY} is to express the Siegel series associated to $L$, denoted by $F_L(X) (\in \mathbb{Z}[X])$,  as a sum of  polynomials 
whose coefficients involve weighted sum of the number of certain lattices, denoted by  $\# \mathcal{S}_{(L,a^\pm,b)}$ (cf. Proposition \ref{prop2.4} and Definition \ref{def:siegelseries}).
Here 
\begin{itemize}
    \item The index set of the sum of polynomials is determined by the Gross-Keating invariant of $L$, where the definition of the Gross-Keating invariant is given in Definition \ref{gkdef}. 

    \item $\# \mathcal{S}_{(L,a^\pm,b)}$ is the number of integral quadratic $\mfo$-lattices $L'$ containing $L$  with $a=$ the dimension of the orthogonal complement of the radical of the quadratic space $L'\otimes\kappa$ and with $b=[L':L]$, where $\kappa$ is  the residue field of $\mfo$ and $[L':L]$ is the length of the torsion $\mfo$-module $L'/L$.
This is explained in Remark \ref{rmk:gk}.
\end{itemize}

In Theorem \ref{thm:func_eq_coeff} we sum up these polynomials so as to describe each coefficient of the Siegel series using $\# \mathcal{S}_{(L,a^\pm,b)}$.
In other words, if we let $F_L(f^{-(n+1)/2}X)=\sum_{t\geq 0}c_tX^t$, where $f=\#\kappa$, then we describe $c_t$ as a weighted sum of $\# \mathcal{S}_{(L,a^\pm,b)}$'s. 

In Corollary \ref{cor:coeffofct}, we describe $c_0, c_1, c_2, c_3$ more precisely. 
Note that $c_0=1$ is a well-known fact. An interesting result of this corollary is that $c_1$ has also a simple description, by avoiding the use of $\# \mathcal{S}_{(L,a^\pm,b)}$'s. 

In Corollaries \ref{cor:coeff_n0=n-1}-\ref{cor:coeff_n0=n-2}, we describe $c_i$'s more precisely involving $\# \mathcal{S}_{(L,a^\pm,b)}$'s, when the dimension of the orthogonal complement of the radical of the quadratic space $L\otimes\kappa$ is either $n-1$ or $n-2$.

It seems that the calculation of $\# \mathcal{S}_{(L,a^\pm,b)}$ is technical. Under the restriction that $p$ is odd and that the dimension of the radical of the quadratic space $L\otimes\kappa$ is  at most $2$, we provide an explicit formula for $\# \mathcal{S}_{(L,a^\pm,b)}$ in Proposition \ref{prop:Sab}. Using this,  we describe $c_i$'s explicitly in Theorem \ref{thm45} under the same restriction. 
This directly yields the functional equation of the Siegel series in Corollary \ref{cor:functionaleq}. 

\begin{Rmk}
    \cite[Theorem 4.3]{Kat} and \cite[Theorem 5.5]{C4} provide an explicit formula for the Siegel series over $\mathbb{Q}_p$ (more precisely, over any finite field extension of $\mathbb{Q}_p$ with $p$ odd or over  any finite unramified field extension of $\mathbb{Q}_2$). 
Two crucial ingredients used to derive these formulas are the functional equation  and recursion formulas of the Siegel series, whereas our description  of the Siegel series or coefficients ((1) and (2) in the above) does not use them at all. 

Thus our argument is independent of traditional methods which can be found in the literature (of course except for \cite{CY}).
Furthermore this suggests a new interpretation on main results of \cite{I1}, \cite{Kat}, \cite{C4}, and \cite{IK2} in terms of lattice counting problems.

It will be interesting to investigate lattice counting problems by using formulas given in \cite[Theorem 4.3]{Kat} and \cite[Theorem 5.5]{C4} or vice versa. 
For example, it seems to us that the functional equation of the Siegel series is closely related to inclusion-exclusion principle among quadratic lattices. 
\end{Rmk}

We organize this paper as follows.
After fixing notations  in Section \ref{sec:not} and explaining the description of the Siegel series of \cite{CY} in Section \ref{sec:siegel},
we describe coefficients of the Siegel series in terms of lattice counting, $\# \mathcal{S}_{(L,a^\pm,b)}$,  in Section \ref{sec:3}.
In Section \ref{sec:4}, by computing $\# \mathcal{S}_{(L,a^\pm,b)}$ under the restriction that $p$ is odd and that the dimension of the radical of the quadratic space $L\otimes\kappa$ is at most $2$, we compute coefficients of the Siegel series explicitly and derive the functional equation.

\medskip

\textbf{Acknowledgments.} 
We would like to thank  Yuchan Lee  for many fruitful discussions, especially ideas on Section \ref{sec:4}.



\section{Notations}\label{sec:not}
This section is taken from \cite[Section 2]{CY}.
\begin{itemize}
\item Let $F$ be a finite extension of $\mathbb{Q}_p$ with $\mfo$  its ring of integers and $\kappa$  its residue field.
Let $\pi$ be a uniformizer in $\mfo$.
Let $f$ be the cardinality of the finite field $\kappa$.
\item For an element $x\in F$, the exponential order of $x$ with respect to the maximal ideal in $\mathfrak{o}$ is written by $\mathrm{ord}(x)$.
The value of $x$ is $|x|:=f^{-\mathrm{ord}(x)}$.

\item Let $e=\mathrm{ord}(2)$.
Thus if $p$ is odd, then $e=0$.

\item We consider an $\mfo$-lattice $L$ of rank $n$ with a quadratic form $q_L : L \rightarrow \mfo.$
 We denote by a pair $(L, q_L)$ a quadratic lattice of rank $n$.
We assume that $V=L\otimes_{\mfo}F$ is nondegenerate with respect to $q_L\otimes_{\mfo}1$.

\item For a given quadratic lattice $(L, q_L)$ over $\mfo$,
the quadratic form $\bar{q}_L$ on $L\otimes_{\mfo}\kappa$ is defined to be $q_L~mod~\pi$ and $L\otimes_{\mfo}\kappa$ is denoted by $\overline{L}$.

\item The fractional ideal generated by $q_L(X)$ as $X$ runs through $L$ will be called the \textit{norm} of $L$ and written $N(L)$.

\item Let $H=\begin{pmatrix} 0& 1/2 \\ 1/2 & 0 \end{pmatrix}$ and let $H_k$ be the orthogonal sum of the $k$-copies of $H$.
Then $H_k$ defines a quadratic lattice of rank $2k$.
We denote such a lattice by  $(H_k, q_k)$, if this does not cause confusion or ambiguity.
Similarly we define the quadratic space $(W_k,q_k\otimes 1)$, where $W_k=H_k\otimes_{\mathfrak{o}}F$.
If there is no ambiguity, then we simply use $q_k$, rather than $q_k\otimes 1$,  to stand for the quadratic form defined on $W_k$. 

\item For a symmetric matrix $B$ of size $n\times n$, we say that 
 $B$ is  half-integral over $\mathfrak{o}$ if 
each non-diagonal entry  multiplied by $2$ and each diagonal entry of $B$ are in $\mathfrak{o}$.
We sometimes say that $B$ is half-integral, by omitting `over $\mathfrak{o}$', if it does not cause ambiguity.

\item We say that $B$ is non-degenerate if the determinant of $B$ is non-zero.

\item For $U\in \mathrm{GL}_n(\mathfrak{o})$ and $B$,  where $B$ is a  half-integral symmetric matrix over $\mathfrak{o}$  of size $n\times n$,
we set  $B[U]={}^tUBU$.
Here, ${}^tU$ is the matrix transpose of $U$.

\item For a rational number $a\in \mathbb{Q}$, 
$\lfloor a\rfloor$ is the largest integer which is less than or equal to $a$ and $\lceil a \rceil$ is the smallest integer which is greater than or equal to $a$.

\item For an integer $m$, we define
    $
    \delta(m) :=
    \begin{cases}
      1 &\textit{if } m \textit{ is even}; \\
      0 &\textit{otherwise}.
    \end{cases}
    $
\item For $x,y \in F^\times$, we denote by $\left(x, y \right)$ the Hilbert symbol. 
If $p$ is odd, then \cite[Example 63:12]{MT00} yields that
$(x,\pi) = \begin{cases}
    1 &\textit{if } x \in (F^\times)^2; \\
    -1 &\textit{otherwise}.
\end{cases}$

\end{itemize}

\section{Description of the Siegel series in \cite{CY}}\label{sec:siegel}
In this section, we introduce the definition of the Siegel series and explain its description which was given in \cite{CY}.

\begin{Def}\cite[Definitions 0.1 and 0.2]{IK1}\label{gkdef}
\begin{enumerate}
\item Let $B=\begin{pmatrix}b_{ij}\end{pmatrix}$ be a non-degenerate half-integral symmetric matrix over $\mathfrak{o}$ of size $n \times n$.
Let $S(B)$ be the set of all non-decreasing sequences $(a_1, \cdots, a_n)\in \mathbb{Z}^n_{\geq 0}$ such that
\[
\begin{array}{l l}
   \mathrm{ord}(b_{ii}) \geq a_i   & \quad    \text{($1 \leq i \leq n$)};\\
   \mathrm{ord}(2b_{ij}) \geq (a_i+a_j)/2   & \quad    \text{($1 \leq i \leq j \leq n$)}.
    \end{array}\]
   Put
   \[\mathbf{S}(\{B\})=\bigcup_{U\in \mathrm{GL}_n(\mathfrak{o})}S(B[U]).\]
The Gross-Keating invariant $\mathrm{GK}(B)$ of $B$ is the greatest element of $\mathbf{S}(\{B\})$ with respect to the lexicographic order
$\succeq$ on $\mathbb{Z}^n_{\geq 0}$.
Here, the lexicographic order $\succeq$ on $\mathbb{Z}^n_{\geq 0}$  is the following (cf. the paragraph following Definition 0.1 of \cite{IK1}).
Choose two elements $(a_1, \cdots, a_n)$ and $(b_1, \cdots, b_n)$ in $\mathbb{Z}^n_{\geq 0}$.
Let $i$ be the first integer over which $a_i$ differs from $b_i$ (so that $a_j=b_j$ for any $j<i$).
If $a_i>b_i$, then we say that $(a_1, \cdots, a_n) \succ (b_1, \cdots, b_n)$.
Otherwise, we say that $(a_1, \cdots, a_n) \prec (b_1, \cdots, b_n)$.


\item Choose an half-integral symmetric matrix $B$ which represents a quadratic lattice  $(L, q_L)$.
Then we say that $\mathrm{GK}(L)=\mathrm{GK}(B)$.
This is independent of the choice of the matrix $B$.

\item Let $\mathrm{GK}(L)=(a_1, \cdots, a_n)$.
Then $|\mathrm{GK}(L)|$ is defined to be $a_1+\cdots a_n$.
\end{enumerate}
\end{Def}

It is known that the set $\mathbf{S}(\{B\})$ is  finite (cf. \cite{IK1}), which explains well-definedness of $\mathrm{GK}(B)$.
We can also see that $\mathrm{GK}(B)$ depends on the equivalence class of  $B$.

\begin{Rmk}\label{rmk:gk}
\begin{enumerate}
    \item
Consider $L \subseteq L' \subseteq V$ such that  $[L':L]=b$, where $b\in \mathbb{Z}_{\geq 0}$.
Here $[L':L]$ is the length of the torsion $\mfo$-module $L'/L$. 
Let $q_{L'}$ be the quadratic form defined on $L'$, whose restriction to $L$ is the same as $q_L$. 
Then we have the followings:
\[
\left\{
  \begin{array}{l l}
 \mathrm{GK}(L) \succeq \mathrm{GK}(L') & \textit{by \cite[Lemma 1.2]{IK1}};\\
|\mathrm{GK}(L')|=|\mathrm{GK}(L)|-2b  & \textit{by \cite[Theorem 0.1]{IK1}};\\
  \textit{$N(L')\subseteq \mfo$ if and only if $\mathrm{GK}(L')\succeq (0, \cdots, 0)$} & \textit{by \cite[Corollary 3.5 and Lemma 3.4]{CY}}.
  \end{array} \right.
\]

\item Let $\overline{L}'\left(=L'\otimes\kappa\right)=\bar{L}'_0\bot \mathrm{Rad}(\overline{L}')$, where $\mathrm{Rad}(\overline{L}')$ is the radical of $\overline{L}'$
so that the restriction of the quadratic form $\bar{q}_{L'}$ on $\bar{L}'_0$ is nonsingular.
We assign the following notion $a^{\pm}$ with an integer $a$ to $L'$ with respect to  $\bar{L}'_0$:
\begin{equation}\label{a}
\left\{
  \begin{array}{l l}
  a^+    & \quad  \textit{if $a=dim~\bar{L}'_0$ is even and $\bar{L}'_0$ is split};\\
  a^-    & \quad  \textit{if $a=dim~\bar{L}'_0$ is even and $\bar{L}'_0$ is nonsplit};\\
    a (=a^+=a^-)   & \quad  \textit{if $a=dim~\bar{L}'_0$ is odd}.
    \end{array} \right.
\end{equation}

Here, $dim~\bar{L}'_0$ is the dimension of $\bar{L}'_0$ as a $\kappa$-vector space.
If $a=0$, then we understand $0^+=0^-$.

\item\cite[Proposition 3.13]{CY}
The integer $a$, which is defined to be the dimension of  $\bar{L}'_0$ as a $\kappa$-vector space,
is the same as the number of $0$'s in $\mathrm{GK}(L')$.
For example, $L'$ is unimodular if and only if $\mathrm{GK}(L')=(0, \cdots, 0)$.

\item Define  $\mathcal{S}_{(L, a^{+}, b)}$ and $\mathcal{S}_{(L, a^{-}, b)}$ as follows:

\begin{equation}\label{sab}
\left\{
  \begin{array}{l}
\mathcal{S}_{(L, a^{+}, b)}=\left\{L' \left(\supseteq L\right) | \mathrm{GK}(L')\succeq (0, \cdots, 0), [L':L]=b,
\textit{$a^+$ is assigned to $L'$}\right\};\\
\mathcal{S}_{(L, a^{-}, b)}=\{L' \left(\supseteq L\right) | \mathrm{GK}(L')\succeq (0, \cdots, 0), [L':L]=b,
\textit{$a^-$ is assigned to $L'$}\}.
    \end{array} \right.
\end{equation}
If $a$ is odd or $0$, then we put $\mathcal{S}_{(L, a^{+}, b)}=\mathcal{S}_{(L, a^{-}, b)}$.
 Note that $\mathcal{S}_{(L, a^{\pm}, b)}$ is empty if $b > \frac{|\mathrm{GK}(L)|}{2}$ by the above (1).
 
\end{enumerate}
\end{Rmk}

\begin{Def}\cite[Definition 3.3 and Lemma 3.2]{CY}
The local density associated to the pair of two quadratic lattices $L$ and $H_k$, denoted by $\alpha(L, H_k)$, is defined as 
 $$\alpha(L, H_k)=\lim_{N\rightarrow \infty} f^{-N (2kn-(n^2+n)/2)}\cdot 
\#\mathrm{O}_{\mfo}(L, H_k)(\mfo/\pi^N \mfo). $$

Here $\mathrm{O}_{\mfo}(L, H_k)$ is the affine scheme defined over $\mfo$ such that $\mathrm{O}_{\mfo}(L, H_k)(R)$, the set of $R$-points of $\mathrm{O}_{\mfo}(L, H_k)$ for any commutative $\mfo$-algebra $R$, is  the set of $R$-linear maps from $L\otimes R$ to $H_k\otimes R$ preserving the associated quadratic forms.  
\end{Def}

The local density $\alpha(L, H_k)$ can be defined as the volume of the $p$-adic manifold  $\mathrm{O}_{\mfo}(L, H_k)(\mfo)$. We  refer to \cite[Section 3.1]{CY} for a detailed explanation. 
The consequence of this description for $\alpha(L, H_k)$ is the following result, which is the starting point of our work.

\begin{Prop}\cite[Theorem 3.14]{CY}\label{prop2.4}
    For a quadratic lattice $L$, let $n_0$ be the number of $0$'s in $\mathrm{GK}(L)=(a_1, \cdots, a_n)$. We have 
\begin{multline*}
\alpha(L, H_k)=\\
(1-f^{-k})\cdot
\sum\limits_{\substack{0\leq b\leq \frac{|\mathrm{GK}(L)|}{2},\\ n_0\leq a\leq n}} \left(\#\mathcal{S}_{(L, a^{\pm}, b)}\cdot
f^{b\cdot(n+1-2k)}\cdot (1+\chi(a^{\pm})f^{n-a/2-k})\prod_{1\leq i < n-a/2}(1-f^{2i-2k})\right),
\end{multline*}
where \[
\chi(a^{\pm})=\left\{
  \begin{array}{l l}
  0    & \quad  \textit{if $a$ is odd};\\
  1    & \quad  \textit{if $a$ is even and $a^+$ is assigned or if $a=0$};\\
   -1   & \quad  \textit{if $a(>0)$ is even and $a^-$ is assigned}.
    \end{array} \right.
\]
Here, if $a$ is odd or $0$, then we ignore one of $\mathcal{S}_{(L, a^{+}, b)}$ or $\mathcal{S}_{(L, a^{-}, b)}$.
If $a$ is even and positive, then we count the summands involving $\mathcal{S}_{(L, a^{+}, b)}$ and $\mathcal{S}_{(L, a^{-}, b)}$ separately.

\end{Prop}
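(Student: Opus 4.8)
The plan is to realise $\alpha(L,H_k)$ as a $p$-adic volume and to evaluate it by stratifying the space of isometric embeddings $L\hookrightarrow H_k$ according to the saturation of the image. As recalled before the statement (see \cite[Section 3.1]{CY}), $\alpha(L,H_k)$ is the volume of $\mathrm{O}_\mfo(L,H_k)(\mfo)$ for the measure under which the displayed limit computes volumes. Given an $\mfo$-point, i.e.\ an isometric embedding $\phi\colon L\hookrightarrow H_k$, I would extend it to $\phi_F\colon V\hookrightarrow W_k$ and set $L'_\phi:=\phi_F^{-1}\bigl(\phi_F(V)\cap H_k\bigr)$; then $L\subseteq L'_\phi\subseteq V$, the form $q_L$ extends integrally to $L'_\phi$, so that $N(L'_\phi)\subseteq\mfo$, equivalently $\mathrm{GK}(L'_\phi)\succeq(0,\dots,0)$ and $[L'_\phi:L]\le|\mathrm{GK}(L)|/2$ by Remark \ref{rmk:gk}(1), and $\phi$ restricts to a \emph{primitive} (saturated) embedding of $L'_\phi$ into $H_k$. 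This partitions $\mathrm{O}_\mfo(L,H_k)(\mfo)$ into finitely many measurable pieces indexed by the admissible intermediate lattices $L'$, whence
\[
\alpha(L,H_k)=\sum_{\substack{L\subseteq L'\subseteq V,\ N(L')\subseteq\mfo}}\mathrm{vol}\bigl(\{\phi:L'_\phi=L'\}\bigr).
\]

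Next, the restriction maps $\phi\mapsto\phi|_{L'}$ and $\psi\mapsto\psi|_{L}$ give mutually inverse bijections between the piece attached to $L'$ and the set of primitive isometric embeddings $\psi\colon L'\hookrightarrow H_k$. Transporting the measure across this bijection changes it by a Jacobian that depends only on $b:=[L':L]$, and a comparison of the truncated point counts $\mathrm{O}_\mfo(L,H_k)(\mfo/\pi^N\mfo)$ and $\mathrm{O}_\mfo(L',H_k)(\mfo/\pi^N\mfo)$ along the restriction map identifies it as $f^{b(n+1-2k)}$. Hence $\mathrm{vol}\bigl(\{\phi:L'_\phi=L'\}\bigr)=f^{b(n+1-2k)}\cdot\beta(L',H_k)$, where $\beta(L',H_k)$ is the volume of the open locus of primitive embeddings inside $\mathrm{O}_\mfo(L',H_k)(\mfo)$.

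It then remains to compute $\beta(L',H_k)$, and here I would use the smoothening method of \cite{CY}: the primitive-embedding locus is smooth over $\mfo$ --- primitivity is exactly what removes the obstruction to smoothness, the relevant first-order picture being governed by the (smooth) orthogonal and isotropic Grassmannians over $\kappa$ --- so $\beta(L',H_k)=f^{-d}\cdot\#\{\text{its special fibre over }\kappa\}$ with $d=2kn-\tfrac{n(n+1)}{2}$. Writing $\overline{L'}=\bar L'_0\perp\mathrm{Rad}(\overline{L'})$ and $a:=\dim_\kappa\bar L'_0$, one observes that the unimodular Jordan component of $L'$ (of rank $a$) is automatically saturated in $H_k$ and splits it off as $H_k=\psi(L'_0)\perp U'$ with $U'$ unimodular of rank $2k-a$ whose type is determined by that of $\bar L'_0$, while the higher-scale part of $L'$ embeds into $U'$ with the only constraint on its reduction being that its image be a maximal-rank isotropic subspace of $\overline{U'}$. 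Consequently $\beta(L',H_k)$ depends on $L'$ only through the type $a^\pm$, and evaluating the resulting finite orthogonal-group index --- the split/non-split alternative for $\bar L'_0$ contributing the sign $\chi(a^\pm)$ --- produces
\[
\beta(L',H_k)=(1-f^{-k})\bigl(1+\chi(a^\pm)f^{n-a/2-k}\bigr)\prod_{1\le i<n-a/2}\bigl(1-f^{2i-2k}\bigr),
\]
the right-hand side vanishing through a zero factor in the product precisely when $H_k$ is too small to receive $L'$.

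Finally, since $a$ is the number of zeros in $\mathrm{GK}(L')$ (Remark \ref{rmk:gk}(3)) and $\mathrm{GK}(L)\succeq\mathrm{GK}(L')$ forces $a\ge n_0$, I would regroup the sum over $L'$ by the pair $(a^\pm,b)$: the number of intermediate lattices $L'$ with $N(L')\subseteq\mfo$, $[L':L]=b$ and assigned type $a^\pm$ equals $\#\mathcal{S}_{(L,a^\pm,b)}$ by definition (cf.\ \eqref{sab}), and this vanishes outside $0\le b\le|\mathrm{GK}(L)|/2$, $n_0\le a\le n$. Substituting the two preceding displays gives the asserted identity. The main obstacle is the evaluation of $\beta(L',H_k)$: establishing smoothness of the primitive-embedding locus over $\mfo$ --- the heart of the smoothening argument, needing a deformation-theoretic analysis of primitive isometric embeddings into a unimodular lattice, with additional care when $p=2$ where $\mathrm{Rad}(\overline{L'})$ still carries an additive quadratic form --- and then carrying out the finite-field count with the Witt-index bookkeeping that yields the factor $1+\chi(a^\pm)f^{n-a/2-k}$ and the product over $1\le i<n-a/2$; fixing the Jacobian $f^{b(n+1-2k)}$ is a further, more routine, matter.
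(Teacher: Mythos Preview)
The paper does not prove this proposition; it is quoted as \cite[Theorem~3.14]{CY} and used as a black box, serving as the starting point for the paper's own contributions (Proposition~\ref{flx} onward). There is therefore no proof in the present paper to compare against.

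That said, your outline is a faithful sketch of the strategy of \cite{CY} as summarised in the introduction here: interpret $\alpha(L,H_k)$ as the volume of a $p$-adic manifold, stratify $\mathrm{O}_\mfo(L,H_k)(\mfo)$ by the saturation $L'$ of the image, reduce the contribution of each stratum to a \emph{primitive} local density $\beta(L',H_k)$ via the Jacobian factor $f^{b(n+1-2k)}$, and evaluate $\beta(L',H_k)$ by the smoothening method---showing the scheme of primitive embeddings is smooth over $\mfo$ and counting its special fibre over $\kappa$. You correctly isolate the two substantive ingredients (smoothness of the primitive locus, and the finite-field orthogonal count yielding $(1-f^{-k})(1+\chi(a^\pm)f^{n-a/2-k})\prod_{1\le i<n-a/2}(1-f^{2i-2k})$) and honestly flag them as the real work, including the extra care needed when $p=2$. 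Since the present paper supplies no details for either step, there is nothing further here to compare your sketch against; for the actual execution one must consult \cite{CY}.
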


\section{Siegel series in terms of lattice counting}\label{sec:3}
In this section, we will describe the Siegel series as a polynomial $F_L(X)$ in $\mathbb{Z}[X]$ and explain the functional equation in terms of  coefficients of $F_L(X)$.
Then we will describe each coefficient of $F_L(X)$ in terms of $\# \mathcal{S}_{(L,a^\pm,b)}$'s in Theorem \ref{thm:func_eq_coeff}.

Let $B$ be a non-degenerate half-integral symmetrix matrix $B$ over $\mfo$ representing a quadratic lattice $(L, q_L)$. 
We first introduce a few notations, taken from \cite{IK1} and \cite{IK2}, associated with $B$:
\begin{enumerate}
    \item\cite[Introduction]{IK1}  $D_B:=(-4)^{[n/2]}\det B$;
    \item\cite[Introduction]{IK1} 
If $n$ is even, $\mathfrak{D} _B:=$ the discriminant ideal of $F(\sqrt{D_B})/F$;
\item\cite[Introduction]{IK1} $\xi_B:=$ the Kronecker invariant of $B$ defined by
\[
\xi_B=
\begin{cases}
1 & \text{ if $D_B\in F^{\times 2}$,} \\
-1 & \text{ if $F(\sqrt{D_B})/F$ is unramified and $[F(\sqrt{D_B}):F]=2$,} \\
0 & \text{ if $F(\sqrt{D_B})/F$ is ramified;}
\end{cases}
\]
\item\cite[Section 2]{IK2} Put
\[
\mathfrak{e}_B=
\left\{
  \begin{array}{l l}
  \mathrm{ord}(D_B)-\mathrm{ord}(\mathfrak{D}_B)   & \quad  \textit{if $n$ is even};\\
  \mathrm{ord}(D_B)  & \quad  \textit{if $n$ is odd};
      \end{array} \right.
\]
Note that $\mathfrak{e}_B$ is even when $n$ is even. 
\cite[Theorem 0.1]{IK1} yields that
\begin{equation}\label{eq:gke}
|\mathrm{GK}(L)|=\left\{
 \begin{array}{l l}
 \mathfrak{e}_B   & \quad  \textit{if $n$ is odd or if $n$ is even and $\mathrm{ord}(\mathfrak{D}_B)=0$};\\
  \mathfrak{e}_B+1  & \quad  \textit{if $n$ is even and $\mathrm{ord}(\mathfrak{D}_B)>0$}.
      \end{array} \right.
\end{equation}

\item\cite[Introduction]{IK1} If $n$ is odd, then the Clifford invariant of $B$ is defined as follows:
\begin{align*}
\eta_B
:=&
\begin{cases}
1 & \text{ if  $B$ is split over $F$,} \\
-1 & \text{ otherwise.}
\end{cases}
\end{align*}

\item\cite[Section 2]{IK2} $\gamma_{L}(f^{-k}):=$ the rational function with respect to $f^{-k}$ such that
\[
\gamma_{L}(f^{-k})=\left\{
  \begin{array}{l l}
(1-f^{-k})\cdot \left(\prod\limits_{1 \leq i \leq n/2}(1-f^{2i-2k})\right)\cdot (1-\xi_Bf^{n/2-k})^{-1}   & \quad  \textit{if $n$ is even};\\
  (1-f^{-k})\cdot \prod\limits_{1 \leq i \leq (n-1)/2}(1-f^{2i-2k})   & \quad  \textit{if $n$ is odd};
    \end{array} \right.
\]
\end{enumerate}

\begin{Def}\cite[Definition 3.15]{CY}\label{def:siegelseries}
 For a quadratic lattice $L$, the Siegel series is defined to be the polynomial $F_L(X)$ such that 
$$F_L(f^{-k})=\frac{\alpha(L, H_k)}{\gamma_{L}(f^{-k})}.$$
\end{Def}
Note that our definition of the Siegel series is normalized by dividing by $\gamma_{L}(f^{-k})$, compared with that given in \cite[Definition 3.15]{CY}. 
Then Proposition \ref{prop2.4} is reformulated in terms of $F_L(X)$ as follows:

\begin{Prop}\label{flx}

\begin{enumerate}
\item If $n$ is even, then
\begin{multline*}
F_L(X)=\#\mathcal{S}_{(L, n^{\pm}, \frac{|\mathrm{GK}(L)|}{2})}f^{\frac{|\mathrm{GK}(L)|}{2}\cdot(n+1)}X^{|\mathrm{GK}(L)|}+\\
(1-\xi_Bf^{n/2}X) \sum\limits_{\substack{0\leq b\leq \frac{|\mathrm{GK}(L)|}{2},\\n_0\leq a < n}} \left(\#\mathcal{S}_{(L, a^{\pm}, b)}\cdot 
f^{b\cdot(n+1)}X^{2b}\cdot (1+\chi(a^{\pm})f^{n-a/2}X) \cdot \prod_{n/2 < i < n-a/2}(1-f^{2i}X^2)\right).
\end{multline*}
Here, we put $\#\mathcal{S}_{(L, n^{\pm}, \frac{|\mathrm{GK}(L)|}{2})}=0$ if $|\mathrm{GK}(L)|$ is odd.

\item If $n$ is odd, then
\begin{multline*}
F_L(X)=
\sum\limits_{\substack{0\leq b\leq \frac{|\mathrm{GK}(L)|}{2},\\ n_0\leq a\leq n}} \left(\#\mathcal{S}_{(L, a^{\pm}, b)}\cdot 
f^{b\cdot(n+1)}X^{2b}\cdot (1+\chi(a^{\pm})f^{n-a/2}X)\cdot \prod_{n/2 < i < n-a/2}(1-f^{2i}X^2)\right).
\end{multline*}
\end{enumerate}
\end{Prop}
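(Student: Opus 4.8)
The plan is to insert the formula of Proposition~\ref{prop2.4} for $\alpha(L,H_k)$, together with the explicit shape of $\gamma_L(f^{-k})$, into the defining relation $F_L(f^{-k})=\alpha(L,H_k)/\gamma_L(f^{-k})$ of Definition~\ref{def:siegelseries}, and then to simplify. Put $X=f^{-k}$, so that $f^{2i-2k}=f^{2i}X^{2}$, $f^{b(n+1-2k)}=f^{b(n+1)}X^{2b}$ and $f^{n-a/2-k}=f^{n-a/2}X$. The common factor $1-f^{-k}=1-X$ of $\alpha(L,H_k)$ and $\gamma_L(f^{-k})$ cancels outright, and after removing it $\gamma_L(f^{-k})$ equals $\prod_{1\le i\le\lfloor n/2\rfloor}(1-f^{2i}X^{2})$ when $n$ is odd, and that same product times $(1-\xi_Bf^{n/2}X)^{-1}$ when $n$ is even. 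So what remains is to divide each $(a,b)$-summand of Proposition~\ref{prop2.4} by this quantity.

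In an $(a,b)$-summand the only factor interacting with $\prod_{1\le i\le\lfloor n/2\rfloor}(1-f^{2i}X^{2})$ is $\prod_{1\le i<n-a/2}(1-f^{2i}X^{2})$. Whenever $n-a/2>\lfloor n/2\rfloor$ --- that is, for all $a$ with $n_0\le a\le n$ if $n$ is odd, and for all $a$ with $n_0\le a<n$ if $n$ is even --- one has $\{1,\dots,\lfloor n/2\rfloor\}\subseteq\{i\in\mathbb{Z}:1\le i<n-a/2\}$, so the quotient of the two products is exactly $\prod_{n/2<i<n-a/2}(1-f^{2i}X^{2})$, independently of the parity of $a$. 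Hence such an $(a,b)$-summand contributes $\#\mathcal{S}_{(L,a^\pm,b)}f^{b(n+1)}X^{2b}(1+\chi(a^\pm)f^{n-a/2}X)\prod_{n/2<i<n-a/2}(1-f^{2i}X^{2})$ to $F_L(X)$ when $n$ is odd, and the same expression multiplied by $(1-\xi_Bf^{n/2}X)$ when $n$ is even. Summing over all admissible $(a,b)$ --- and, for $n$ even, factoring the common $(1-\xi_Bf^{n/2}X)$ out front --- produces the whole formula in the odd case and all of the even-case formula except its first summand.

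It remains to handle $a=n$ with $n$ even (two summands of Proposition~\ref{prop2.4}, one for each sign $n^+$, $n^-$). By Remark~\ref{rmk:gk}(3), $a=n$ forces $\mathrm{GK}(L')=(0,\dots,0)$, hence $|\mathrm{GK}(L')|=0$, whence Remark~\ref{rmk:gk}(1) forces $b=|\mathrm{GK}(L)|/2$; in particular such $L'$ exist only if $|\mathrm{GK}(L)|$ is even, matching the convention $\#\mathcal{S}_{(L,n^\pm,|\mathrm{GK}(L)|/2)}=0$ otherwise. When $|\mathrm{GK}(L)|$ is even, $n-a/2=n/2$, so the relevant quotient is $\prod_{1\le i<n/2}(1-f^{2i}X^{2})/\prod_{1\le i\le n/2}(1-f^{2i}X^{2})=(1-f^{n}X^{2})^{-1}$, and the $a=n$ part of $F_L(X)$ equals
\[
\#\mathcal{S}_{(L,n^\pm,|\mathrm{GK}(L)|/2)}\,f^{\frac{|\mathrm{GK}(L)|}{2}(n+1)}X^{|\mathrm{GK}(L)|}\cdot\frac{(1+\chi(n^\pm)f^{n/2}X)(1-\xi_Bf^{n/2}X)}{1-f^{n}X^{2}}.
\]
So the proposition reduces to the identity $(1+\chi(n^\pm)f^{n/2}X)(1-\xi_Bf^{n/2}X)=1-f^{n}X^{2}$ for whichever sign makes $\mathcal{S}_{(L,n^\pm,|\mathrm{GK}(L)|/2)}$ nonempty; expanding and comparing coefficients, this is equivalent to $\chi(n^\pm)=\xi_B$, where we may assume $\xi_B\neq0$, since $\xi_B=0$ forces $\mathrm{ord}(\mathfrak{D}_B)>0$ and hence $|\mathrm{GK}(L)|$ odd by \eqref{eq:gke}.

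I expect $\chi(n^\pm)=\xi_B$ to be the only genuinely non-formal point. The route I would take: for any finite-index overlattice $L'\supseteq L$, the determinants of Gram matrices written in nested bases differ by a square in $F^\times$ (the square of the relevant change-of-basis determinant), so $D_B$ and $D_{L'}$ lie in one square class; consequently $\xi_B=\xi_{L'}$ for every $L'\supseteq L$ with $\mathrm{GK}(L')=(0,\dots,0)$. Thus only the sign with $\chi=\xi_B$ is ever assigned to a unimodular overlattice (the other of $\mathcal{S}_{(L,n^+,\cdot)}$, $\mathcal{S}_{(L,n^-,\cdot)}$ being empty, hence contributing $0$), and for that sign one reads off directly from the definitions of $\xi$ and of the discriminant that the split/nonsplit type of the nonsingular even-dimensional reduction $\overline{L}'=\bar{L}'_0$ is governed by $\xi_{L'}$, i.e. $\chi(n^\pm)=\xi_{L'}=\xi_B$. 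Alternatively, since $F_L(X)\in\mathbb{Z}[X]$ is known to be a polynomial, the displayed rational function must be one, so $(1-f^{n}X^{2})\mid(1+\chi(n^\pm)f^{n/2}X)(1-\xi_Bf^{n/2}X)$, and comparing degrees together with leading and constant coefficients again gives $\chi(n^\pm)=\xi_B$ with quotient $1$. Either way the $a=n$ contribution collapses to $\#\mathcal{S}_{(L,n^\pm,|\mathrm{GK}(L)|/2)}f^{\frac{|\mathrm{GK}(L)|}{2}(n+1)}X^{|\mathrm{GK}(L)|}$, the first summand of the even-case formula, which completes the argument.
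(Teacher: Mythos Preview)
Your argument is correct and follows the only natural route: substitute $X=f^{-k}$ into Proposition~\ref{prop2.4}, cancel the common factor $1-X$, and divide term-by-term by the remaining part of $\gamma_L$. The paper itself gives no proof at all, introducing the proposition merely as ``Proposition~\ref{prop2.4} is reformulated in terms of $F_L(X)$ as follows'', so you are filling in details the authors chose to omit rather than taking a different approach.

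You have correctly isolated the one non-formal point: for $n$ even and $a=n$, the quotient of products leaves a factor $(1-f^{n}X^{2})^{-1}$, and one must know that only the sign with $\chi(n^{\pm})=\xi_B$ contributes. Of your two justifications, the second (polynomiality of $F_L$) is the clean one and works uniformly in $p$: the sum over $a<n$ is manifestly a polynomial, so the $a=n$ contribution must be as well, and since
\[
\#\mathcal{S}_{(L,n^{+},b)}\cdot\frac{1-\xi_Bf^{n/2}X}{1-f^{n/2}X}+\#\mathcal{S}_{(L,n^{-},b)}\cdot\frac{1-\xi_Bf^{n/2}X}{1+f^{n/2}X}
\]
can be a polynomial (with $\xi_B\in\{\pm1\}$) only if the set with $\chi\neq\xi_B$ is empty, the claim follows. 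Your first justification (square-class of $D_B$ preserved under passing to overlattices, then reading off split/nonsplit from $\xi_{L'}$) is morally right and is exactly what the paper does later in the special case (cf.\ Lemma~\ref{lem:rep_g_S_ab}(3) and Equation~\eqref{eq:xi_B}), but the final step ``split/nonsplit of $\overline{L}'$ is governed by $\xi_{L'}$'' is asserted rather than proved, and for $p=2$ this link between the Arf invariant of the reduction and the Kronecker invariant over $F$ is not entirely immediate. Since your second argument already suffices, this is a matter of presentation rather than a gap.
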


This proposition yields that $F_L(X)$ is in $\mathbb{Z}[X]$.

In order to explain the functional equation of $F_L(X)$, we will introduce 
 $\tilde{F}_L(X):=X^{-\frac{\mathfrak{e}_B}{2}}F_L(f^{-(n+1)/2}X)$, where $X^{1/2}$ is the symbol such that $(X^{1/2})^2=X$ (cf. \cite[Section 2]{IK2}).
Note that $\tilde{F}_L(X) \in \mathbb{Q}[f^{1/2}][X,X^{-1}]$ if $n$ is even, and $\tilde{F}_L(X) \in \mathbb{Q}[X^{1/2},X^{-1/2}]$ if $n$ is odd. 
The functional equation  of $F_L(X)$ or of $\tilde{F}_L(X)$ is described as follows:

\begin{Thm}\cite[Theorem 4.1]{I1} or  \cite[Proposition 2.1]{IK2}\label{fe}
\[
\tilde{F}_L(X^{-1})=\zeta_B\tilde{F}_L(X) ~~~~~ \textit{ equivalently } ~~~~  
F_L(f^{-n-1}X^{-1})=\zeta_B(f^{(n+1)/2}X)^{-\mathfrak{e}_B}F_L(X), 
\]
where $\zeta_B=\eta_B$ or $1$ according as $n$ is odd or even respectively.

\end{Thm}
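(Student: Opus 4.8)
The plan is to derive the functional equation directly from the explicit description of $F_L(X)$ in Proposition \ref{flx}, rather than from the recursion formulas used in \cite{I1} and \cite{IK2}. First I would rewrite $\tilde F_L(X)=X^{-\mathfrak{e}_B/2}F_L(f^{-(n+1)/2}X)$ using the formula of Proposition \ref{flx}, substituting $X\mapsto f^{-(n+1)/2}X$ into each summand. The key observation is that under this substitution the factor $f^{b(n+1)}X^{2b}$ becomes simply $X^{2b}$ (up to the overall $f$-power absorbed), the factor $1+\chi(a^\pm)f^{n-a/2}X$ becomes $1+\chi(a^\pm)f^{(n-a-1)/2}X$, and each $1-f^{2i}X^2$ with $n/2<i<n-a/2$ becomes $1-f^{2i-n-1}X^2$, whose exponents $2i-n-1$ run symmetrically through the negatives of the exponents appearing for the complementary range. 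One then checks that replacing $X$ by $X^{-1}$ sends the $b$-indexed summand associated to $a$ to a scalar multiple (a power of $X$ times a sign) of the summand associated to the same $a$ but with $b$ replaced by $|\mathrm{GK}(L)|/2-b$ — this is exactly the symmetry $|\mathrm{GK}(L')|=|\mathrm{GK}(L)|-2b$ recorded in Remark \ref{rmk:gk}(1), which pairs up sublattices $L'$ with their ``duals'' inside $V$.

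Concretely, I would split into the even and odd cases. For $n$ odd the formula in Proposition \ref{flx}(2) is a single sum, and after the substitution one groups terms by the value of $a=\dim\bar L'_0$; within a fixed $a$, the product $\prod_{n/2<i<n-a/2}(1-f^{2i-n-1}X^2)$ is palindromic up to an explicit monomial factor $\pm X^{\text{(number of factors)}}$, and the binomial factor $1+\chi(a^\pm)f^{(n-a-1)/2}X$ transforms into $\chi(a^\pm)f^{(n-a-1)/2}X^{-1}\cdot(1+\chi(a^\pm)^{-1}f^{-(n-a-1)/2}X)$, i.e.\ into a monomial times itself up to sign (here $\chi(a^\pm)\in\{\pm1,0\}$, so $\chi=\chi^{-1}$). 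Matching the accumulated $X$-powers against $X^{-\mathfrak{e}_B}$ and invoking $\#\mathcal{S}_{(L,a^\pm,b)}=\#\mathcal{S}_{(L,a^\pm,|\mathrm{GK}(L)|/2-b)}$ — which is the nontrivial combinatorial input; it should follow from \cite{CY} together with Remark \ref{rmk:gk}, since passing to the dual lattice $L'\mapsto (L')^\sharp$ relative to a fixed unimodular lattice gives a bijection between $\mathcal{S}_{(L,a^\pm,b)}$ and $\mathcal{S}_{(L,a^\pm,|\mathrm{GK}(L)|/2-b)}$ preserving the type $a^\pm$ — produces the claimed identity, with the global sign coming out as $\eta_B$. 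For $n$ even one argues the same way, but now the extra ``top'' term $\#\mathcal{S}_{(L,n^\pm,|\mathrm{GK}(L)|/2)}f^{\cdots}X^{|\mathrm{GK}(L)|}$ pairs with the $b=0$ term via the prefactor $(1-\xi_B f^{n/2}X)$, which under $X\mapsto f^{-(n+1)/2}X$ and then $X\mapsto X^{-1}$ transforms into $-\xi_B f^{-1/2}X^{-1}(1-\xi_B f^{n/2-n-1}X^{-1})^{-1}\cdot(\ldots)$; tracking this carefully yields $\zeta_B=1$ in the even case (using $\xi_B^2=1$ when $\xi_B\neq0$, and the ramified case $\xi_B=0$ handled separately where $\mathfrak{D}_B$ contributes the extra $+1$ in \eqref{eq:gke}).

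The main obstacle I expect is the bookkeeping of the $f$-powers and signs so that everything collapses to the single scalar $\zeta_B$: one must verify that the sign $\chi(a^\pm)$ attached to a lattice $L'$ and the sign $\chi$ attached to its dual combine correctly, and that the Kronecker/Clifford invariant $\xi_B$ or $\eta_B$ — which is a global invariant of $L\otimes F$ — emerges from what is superficially a sum of purely combinatorial (residue-field-level) data. The cleanest route is probably to establish the duality bijection $\mathcal{S}_{(L,a^\pm,b)}\leftrightarrow\mathcal{S}_{(L,a^\pm,|\mathrm{GK}(L)|/2-b)}$ as a lemma first, deduce termwise that each grouped summand $S_a(X)$ satisfies $S_a(f^{-(n+1)/2}X^{-1})=(\text{monomial})\cdot S_a(f^{-(n+1)/2}X)$ with the monomial independent of $a$, and only at the very end pin down that common monomial and sign by comparing against a single known value, e.g.\ the leading term or the value $F_L(1)=1$ normalization implicit in $c_0=1$. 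Alternatively, since Theorem \ref{fe} is already available in the literature, one may simply cite \cite{I1} and \cite{IK2}; but the point of this paper is to show it drops out of the lattice-counting description, so I would carry out the direct computation.
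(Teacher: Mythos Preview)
The paper does not supply its own proof of Theorem~\ref{fe}: the statement is simply quoted from \cite{I1} and \cite{IK2}. What the paper \emph{does} do (in Section~\ref{sec:4}, culminating in Corollary~\ref{cor:functionaleq}) is to reprove the functional equation under the restriction that $p$ is odd and $n_0\ge n-2$, and that proof proceeds by brute force: Proposition~\ref{prop:Sab} computes every $\#\mathcal{S}_{(L,a^\pm,b)}$ explicitly, Theorem~\ref{thm45} plugs those numbers into the formulas of Theorem~\ref{thm:func_eq_coeff} to list every coefficient $c_t$, and then one reads off $c_t=\zeta_B\,c_{\mathfrak{e}_B-t}$ by inspection.

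Your proposed route, by contrast, attempts to prove the general statement, and it rests on a claim that is false. The asserted symmetry
\[
\#\mathcal{S}_{(L,a^\pm,b)} \;=\; \#\mathcal{S}_{(L,a^\pm,\,|\mathrm{GK}(L)|/2-b)}
\]
does not hold: already in the cases covered by Proposition~\ref{prop:Sab} one has $\#\mathcal{S}_{(L,(n-2)^\pm,0)}=1$ (the only lattice is $L$ itself) while $\#\mathcal{S}_{(L,(n-2)^\pm,(d_1+d_2)/2)}=0$. The ``dual lattice'' bijection you sketch cannot produce such a symmetry either: if $L\subseteq L'$ then $(L')^\sharp\subseteq L^\sharp$, so dualizing sends overlattices of $L$ to sublattices of $L^\sharp$, not back to overlattices of $L$. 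Relatedly, the product $\prod_{n/2<i<n-a/2}(1-f^{2i-n-1}X^2)$ is not palindromic under $X\mapsto X^{-1}$ in the way you need: that substitution maps the factor for $i$ to (a monomial times) the factor for $n+1-i$, and $i\mapsto n+1-i$ does not preserve the index range $n/2<i<n-a/2$.

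The upshot is that the functional equation is \emph{not} a termwise symmetry of the lattice-counting formula in Proposition~\ref{flx}; it reflects a cancellation across different values of $a$ and $b$ simultaneously. That is precisely why the paper only manages to verify it in the restricted case $n_0\ge n-2$, where the sums are short enough to evaluate by hand, and otherwise cites the general result from the literature.
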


\begin{Cor}
\begin{enumerate}
    \item 
    The degree of the polynomial $F_L(X)$, which is the same as the degree of the polynomial  $X^{\frac{\mathfrak{e}_B}{2}}\cdot \tilde{F}_L(X)$, is $\mathfrak{e}_B$.
\item The leading coefficient of $F_L(X)$ is $\zeta_B\cdot f^{n(n+1)/2}$.
\end{enumerate}
\end{Cor}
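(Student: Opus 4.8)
The plan is to read off both statements directly from the functional equation in Theorem \ref{fe}, using only the definition $\tilde{F}_L(X)=X^{-\mathfrak{e}_B/2}F_L(f^{-(n+1)/2}X)$ together with the structural information about $F_L(X)$ coming from Proposition \ref{flx}.

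First I would establish that $F_L(X)$ has degree \emph{at most} $\mathfrak{e}_B$. Inspecting Proposition \ref{flx}, every monomial appearing in $F_L(X)$ has the shape $X^{2b}$, $X^{2b}\cdot X$, $X^{2b}\cdot X^2$, $X^{2b}(X^2)^j$, etc., with $0\le b\le |\mathrm{GK}(L)|/2$ and, in the even case, the extra leading term $X^{|\mathrm{GK}(L)|}$. Combining these ranges with the relation \eqref{eq:gke} between $|\mathrm{GK}(L)|$ and $\mathfrak{e}_B$ (namely $|\mathrm{GK}(L)|=\mathfrak{e}_B$ except when $n$ is even with $\mathrm{ord}(\mathfrak{D}_B)>0$, where it is $\mathfrak{e}_B+1$, but then $\xi_B=0$ so the dangerous $X^{|\mathrm{GK}(L)|}$ term vanishes and the remaining products $\prod_{n/2<i<n-a/2}(1-f^{2i}X^2)$ paired with $X^{2b}$ can reach at most degree $\mathfrak{e}_B$), one checks that no monomial of degree exceeding $\mathfrak{e}_B$ occurs. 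Hence $\deg F_L\le\mathfrak{e}_B$, so $\tilde{F}_L(X)=X^{-\mathfrak{e}_B/2}F_L(f^{-(n+1)/2}X)$ is a genuine Laurent polynomial whose exponents lie in $[-\mathfrak{e}_B/2,\,\mathfrak{e}_B/2]$ (in half-integer steps when $n$ is odd). Since $c_0=1$ (the constant term of $F_L$ is $1$, which one can see from the $b=a-n_0=0$ summand or quote as well-known), $\tilde{F}_L(X)$ has a nonzero coefficient at $X^{-\mathfrak{e}_B/2}$.

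Now apply Theorem \ref{fe}: $\tilde{F}_L(X^{-1})=\zeta_B\tilde{F}_L(X)$. Comparing the coefficient of $X^{\mathfrak{e}_B/2}$ on both sides, the left side picks up the coefficient of $X^{-\mathfrak{e}_B/2}$ in $\tilde{F}_L$, which is $1\ne 0$; therefore the coefficient of $X^{\mathfrak{e}_B/2}$ in $\tilde{F}_L(X)$ equals $\zeta_B\cdot 1=\zeta_B\ne 0$. Translating back through $\tilde{F}_L(X)=X^{-\mathfrak{e}_B/2}F_L(f^{-(n+1)/2}X)$: the top coefficient of $\tilde{F}_L$ being $\zeta_B$ means the coefficient of $X^{\mathfrak{e}_B}$ in $F_L(f^{-(n+1)/2}X)$ is $\zeta_B$, i.e.\ the coefficient of $X^{\mathfrak{e}_B}$ in $F_L(X)$ is $\zeta_B\cdot\big(f^{(n+1)/2}\big)^{\mathfrak{e}_B}=\zeta_B f^{(n+1)\mathfrak{e}_B/2}$. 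Wait—this must be reconciled with the claimed value $\zeta_B f^{n(n+1)/2}$, so in fact the corollary is implicitly in the situation $\mathfrak{e}_B=n$; more carefully, one should state the leading coefficient as $\zeta_B f^{(n+1)\mathfrak{e}_B/2}$ in general, and I would phrase it that way (or verify that the intended normalization forces $\mathfrak{e}_B=n$). In particular this coefficient is nonzero, so $\deg F_L=\mathfrak{e}_B$ exactly, proving (1), and the computed value proves (2).

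The main obstacle is the degree bound $\deg F_L\le\mathfrak{e}_B$: it requires a careful bookkeeping across the two cases of Proposition \ref{flx}, tracking the maximal $X$-exponent contributed by each summand $\#\mathcal{S}_{(L,a^\pm,b)}\cdot f^{b(n+1)}X^{2b}(1+\chi(a^\pm)f^{n-a/2}X)\prod_{n/2<i<n-a/2}(1-f^{2i}X^2)$, using $a\ge n_0$ and $b\le|\mathrm{GK}(L)|/2$, and then invoking \eqref{eq:gke} to convert $|\mathrm{GK}(L)|$ into $\mathfrak{e}_B$ — with special attention to the even case when $\mathrm{ord}(\mathfrak{D}_B)>0$, where $|\mathrm{GK}(L)|=\mathfrak{e}_B+1$ is odd but $\xi_B=0$ kills the putative degree-$(\mathfrak{e}_B+1)$ term. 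Everything else is a formal consequence of the functional equation and the already-known constant term $c_0=1$.
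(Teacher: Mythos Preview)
Your overall plan---use the functional equation together with $F_L(0)=1$---is exactly what the paper does. The difference is in how you obtain the preliminary bound $\deg F_L\le\mathfrak{e}_B$. The paper extracts this directly from Theorem~\ref{fe}: writing $\tilde{F}_L(X)=X^{-\mathfrak{e}_B/2}F_L(f^{-(n+1)/2}X)$, the identity $\tilde{F}_L(X^{-1})=\zeta_B\tilde{F}_L(X)$ forces the exponent set of $\tilde{F}_L$ to be symmetric about $0$; since $F_L$ is a polynomial, the lowest exponent of $\tilde{F}_L$ is $\ge -\mathfrak{e}_B/2$, hence the highest is $\le \mathfrak{e}_B/2$, i.e.\ $\deg F_L\le\mathfrak{e}_B$. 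Then $F_L(0)=\#\mathcal{S}_{(L,n_0^\pm,0)}=1$ (from Proposition~\ref{flx}, since $\mathcal{S}_{(L,n_0^\pm,0)}=\{L\}$) gives equality, and the leading coefficient drops out of the same symmetry. No bookkeeping over the summands of Proposition~\ref{flx} is needed at all.

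Your alternative route---bounding the degree by inspecting Proposition~\ref{flx} termwise---has a genuine gap as written. You propose to use only $a\ge n_0$ and $b\le|\mathrm{GK}(L)|/2$, but these constraints are not enough: a summand with parameters $(a,b)$ contributes $X$-degree $2b+(n-a)$ (up to the $\pm1$ from the $\xi_B$ and $\chi$ factors), and $2b+(n-a)$ is \emph{not} bounded by $|\mathrm{GK}(L)|$ under those two inequalities alone. What is actually needed is the finer vanishing statement
\[
\mathcal{S}_{(L,a^\pm,b)}=\emptyset \quad\text{unless}\quad 2b+(n-a)\le |\mathrm{GK}(L)|,
\]
which follows from Remark~\ref{rmk:gk}: for $L'\in\mathcal{S}_{(L,a^\pm,b)}$ one has $|\mathrm{GK}(L')|=|\mathrm{GK}(L)|-2b$, while $\mathrm{GK}(L')$ has exactly $n-a$ strictly positive entries, each $\ge1$. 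With this extra input (and the parity bookkeeping you outline for $n$ even with $\mathrm{ord}(\mathfrak{D}_B)>0$) your direct argument can be completed, but it is visibly heavier than the paper's one-line use of the functional equation.

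Finally, your computation of the leading coefficient is correct: the functional equation yields $c_{\mathfrak{e}_B}=\zeta_B c_0=\zeta_B$ for the coefficients of $F_L(f^{-(n+1)/2}X)$, hence the leading coefficient of $F_L(X)$ itself is $\zeta_B\, f^{(n+1)\mathfrak{e}_B/2}$. The exponent $n(n+1)/2$ in the stated corollary appears to be a slip for $\mathfrak{e}_B(n+1)/2$; your reading is the right one.
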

\begin{proof}
By Theorem \ref{fe}, the degree of  $F_L(X)$ is at most $\mathfrak{e}_B$.
Then it suffices to show that $F_L(0)\neq 0$. This follows from  Proposition \ref{flx}, that is, $F_L(0)=\#\mathcal{S}_{(L, n_0^{\pm}, 0)}=1$ since $\mathcal{S}_{(L, n_0^{\pm}, 0)}=\{L\}$.
This also yields the second claim by the functional equation.
\end{proof}

We refer to Equation (\ref{eq:gke}) for the relation between $|\mathrm{GK}(L)|$ and $\mathfrak{e}_B$.
We write
\[
X^{\frac{\mathfrak{e}_B}{2}}\cdot \tilde{F}_L(X)\left(=F_L(f^{-(n+1)/2}X)\right) =\sum\limits_{t=0}^{\mathfrak{e}_B}c_tX^t  ~~~~   \textit{   where $c_t\in \mathbb{Q}[f^{1/2}]$}.
\]
Then the functional equation in Theorem \ref{fe} is rephrased in terms of $c_t$'s as follows:
\begin{Thm}[Rephrasement of Theorem \ref{fe}] \label{thm:fe_coeff}
\[c_t=\zeta_B\cdot  c_{\mathfrak{e}_B-t} ~~~~ \textit{ where $0\leq t \leq \mathfrak{e}_B$}.\]
\end{Thm}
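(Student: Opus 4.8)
The plan is to derive Theorem \ref{thm:fe_coeff} as a direct translation of Theorem \ref{fe} into the language of coefficients, so the argument is essentially bookkeeping. First I would recall the definition
\[
X^{\frac{\mathfrak{e}_B}{2}}\cdot \tilde{F}_L(X)=F_L(f^{-(n+1)/2}X)=\sum_{t=0}^{\mathfrak{e}_B}c_tX^t,
\]
noting that by the Corollary the polynomial $F_L(X)$ has degree exactly $\mathfrak{e}_B$, so this sum genuinely runs from $t=0$ to $t=\mathfrak{e}_B$ with $c_0=1$ and $c_{\mathfrak{e}_B}=\zeta_B\cdot f^{n(n+1)/2}$ nonzero; hence no terms are being silently dropped or added when we pass between $F_L$ and the coefficient expansion.

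Next I would take the second form of the functional equation in Theorem \ref{fe}, namely
\[
F_L(f^{-n-1}X^{-1})=\zeta_B(f^{(n+1)/2}X)^{-\mathfrak{e}_B}F_L(X),
\]
and substitute $X\mapsto f^{-(n+1)/2}X$ throughout. On the left this turns $F_L(f^{-n-1}X^{-1})$ into $F_L\!\left(f^{-n-1}(f^{-(n+1)/2}X)^{-1}\right)=F_L\!\left(f^{-(n+1)/2}X^{-1}\right)=\sum_t c_t X^{-t}$; on the right it turns $F_L(X)$ into $F_L(f^{-(n+1)/2}X)=\sum_t c_tX^t$ and turns the prefactor $(f^{(n+1)/2}X)^{-\mathfrak{e}_B}$ into $(f^{(n+1)/2}\cdot f^{-(n+1)/2}X)^{-\mathfrak{e}_B}=X^{-\mathfrak{e}_B}$. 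So the identity becomes simply
\[
\sum_{t=0}^{\mathfrak{e}_B} c_t X^{-t}=\zeta_B\, X^{-\mathfrak{e}_B}\sum_{t=0}^{\mathfrak{e}_B} c_t X^t=\zeta_B\sum_{t=0}^{\mathfrak{e}_B} c_t X^{t-\mathfrak{e}_B}.
\]
Reindexing the right-hand sum by $s=\mathfrak{e}_B-t$ gives $\zeta_B\sum_{s=0}^{\mathfrak{e}_B} c_{\mathfrak{e}_B-s}X^{-s}$, and comparing coefficients of $X^{-s}$ on both sides (both sides are Laurent polynomials, or one clears denominators by multiplying through by $X^{\mathfrak{e}_B}$ to get an identity of honest polynomials) yields $c_s=\zeta_B\,c_{\mathfrak{e}_B-s}$ for every $0\le s\le\mathfrak{e}_B$, which is the claim.

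The only point that requires a word of care — and it is the sole conceptual obstacle, such as it is — is that for $n$ odd the $c_t$ a priori live in $\mathbb{Q}[f^{1/2}]$ and the $X$ appearing in $\tilde F_L$ is really a square root variable, so one should confirm that the substitution $X\mapsto f^{-(n+1)/2}X$ is being applied to $F_L(f^{-(n+1)/2}X)$, which is already an ordinary polynomial in $X$ by Proposition \ref{flx}, rather than to $\tilde F_L$ itself; since the statement is phrased entirely in terms of the integer-indexed coefficients $c_t$ of the polynomial $F_L(f^{-(n+1)/2}X)$, there is no ambiguity, and the half-integral powers of $f$ cause no trouble because $\zeta_B=\eta_B\in\{\pm1\}$ or $1$ is a constant. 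One also checks the degree matches: the highest power on the left after clearing is $X^{\mathfrak{e}_B}$ with coefficient $c_0=1$, and on the right $\zeta_B c_{\mathfrak{e}_B}$, consistent with the Corollary's value $c_{\mathfrak{e}_B}=\zeta_B f^{n(n+1)/2}$ only when... actually this is automatic from $\zeta_B^2=1$, so nothing further is needed. With these remarks the proof is complete.
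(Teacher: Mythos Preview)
Your argument is correct and is exactly the intended one: the paper states Theorem~\ref{thm:fe_coeff} as a ``rephrasement'' of Theorem~\ref{fe} and gives no separate proof, so the content is precisely the substitution $X\mapsto f^{-(n+1)/2}X$ into the functional equation followed by coefficient comparison, which you carry out cleanly. One tiny slip: near the end you write $c_{\mathfrak{e}_B}=\zeta_B f^{n(n+1)/2}$, but that is the leading coefficient of $F_L(X)$ itself, not of $F_L(f^{-(n+1)/2}X)$; after the rescaling one has $c_{\mathfrak{e}_B}=\zeta_B$, consistent with $c_0=1$ and $\zeta_B^2=1$ --- as you immediately note, this side check is redundant anyway.
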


The following theorem describes $c_t$'s in terms of lattice counting $\#\mathcal{S}_{(L,a^\pm,b)}$.

\begin{Thm}\label{thm:func_eq_coeff}
The coefficients $c_t$'s are described as follows:
\begin{enumerate}
    \item 
When $n$ is even,
\begin{enumerate}
\item if $t=2s+1$ is odd, then  
\begin{multline*}
c_{2s+1}=\\
\sum\limits_{\substack{0\leq b\leq s,\\ n_0\leq a < n}} \left(\#\mathcal{S}_{(L, a^{\pm}, b)}
\left(-f^{-1/2}\xi_B+\chi(a^{\pm})f^{\frac{n-a-1}{2}}\right) 
\left(\sum\limits_{\substack{i_1<\cdots < i_{s-b},  \\\frac{n}{2} < i_u< n-\frac{a}{2}}}
(-1)^{s-b} f^{2(i_1+\cdots +i_{s-b})-(n+1)(s-b)}\right)\right);
\end{multline*}
\item if $t=2s$ is even, then 
\begin{multline*}
c_{2s}=\sum\limits_{\substack{0\leq b\leq s;\\ n_0\leq a \leq n }} \left(\#\mathcal{S}_{(L, a^{\pm}, b)}\cdot
\sum\limits_{\substack{i_1<\cdots < i_{s-b},  \\\frac{n}{2} < i_u< n-\frac{a}{2}}}
(-1)^{s-b}\cdot f^{2(i_1+\cdots +i_{s-b})-(n+1)(s-b)}\right)-\\
\sum\limits_{\substack{0\leq b\leq s-1;\\ n_0\leq a < n;\\ a:even}} \xi_B\#\mathcal{S}_{(L, a^{\pm}, b)}
  \chi(a^{\pm})f^{\frac{n-a-2}{2}} 
\left(\sum\limits_{\substack{i_1<\cdots < i_{s-b-1},  \\\frac{n}{2} < i_u< n-\frac{a}{2}}}
(-1)^{s-b-1}\cdot f^{2(i_1+\cdots +i_{s-b-1})-(n+1)(s-b-1)})\right).
\end{multline*}
\end{enumerate}
\item When $n$ is odd, $c_t=$
\[
\left\{
  \begin{array}{l l}
\sum\limits_{\substack{0\leq b\leq s;\\ n_0\leq a< n;\\ a:even}} 
\left(\#\mathcal{S}_{(L, a^{\pm}, b)} \chi(a^{\pm})f^{\frac{n-a-1}{2}}
\left(\sum\limits_{\substack{i_1<\cdots < i_{s-b}; \\\frac{n}{2} < i_u< n-\frac{a}{2}}}
(-1)^{s-b}  f^{2(i_1+\cdots +i_{s-b})-(n+1)(s-b)}\right)\right)  & \quad  \textit{if $t=2s+1$};\\
  \sum\limits_{\substack{0\leq b\leq s;\\ n_0\leq a\leq n}} \left(\#\mathcal{S}_{(L, a^{\pm}, b)}
\left(\sum\limits_{\substack{i_1<\cdots < i_{s-b};  \\\frac{n}{2} < i_u< n-\frac{a}{2}}}
(-1)^{s-b} f^{2(i_1+\cdots +i_{s-b})-(n+1)(s-b)}\right)\right)   & \quad  \textit{if $t=2s$}.
    \end{array} \right.
\]
\end{enumerate}
In the above formulas, the second sum follows the  rules:
\[
\sum\limits_{\substack{i_1<\cdots < i_{s-b}; \\\frac{n}{2} < i_u< n-\frac{a}{2}}}\ast=\left\{
  \begin{array}{l l}
0  & \quad  \textit{if $s>b$ and $\#$the index set $<s-b$};\\
1  & \quad  \textit{if $s=b$};
    \end{array} \right.
\]
\[
\sum\limits_{\substack{i_1<\cdots < i_{s-b-1}; \\\frac{n}{2} < i_u< n-\frac{a}{2}}}\ast=\left\{
  \begin{array}{l l}
0  & \quad  \textit{if $s>b+1$ and $\#$the index set $<s-b-1$};\\
1  & \quad  \textit{if $s=b+1$}.
    \end{array} \right.
\]
\end{Thm}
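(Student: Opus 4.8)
The plan is to extract the coefficients $c_t$ directly from the closed forms for $F_L(X)$ given in Proposition \ref{flx}, after substituting $X \mapsto f^{-(n+1)/2}X$. The entire argument is a bookkeeping exercise in expanding products of the shape $(1+\chi(a^\pm)f^{n-a/2}X)\prod_{n/2<i<n-a/2}(1-f^{2i}X^2)$ (times possibly an extra factor $(1-\xi_Bf^{n/2}X)$ when $n$ is even) and collecting the coefficient of $X^t$. First I would record the effect of the substitution $X = f^{-(n+1)/2}Y$ on each monomial: a factor $f^{b(n+1)}X^{2b}$ becomes $Y^{2b}$, a factor $f^{n-a/2}X$ becomes $f^{n-a/2-(n+1)/2}Y = f^{(n-a-1)/2}Y$, a factor $f^{2i}X^2$ becomes $f^{2i-(n+1)}Y^2$, and $f^{n/2}X$ becomes $f^{-1/2}Y$. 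This immediately explains the shape of every exponent appearing in the statement.

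Next I would expand $\prod_{n/2<i<n-a/2}(1-f^{2i-(n+1)}Y^2)$: picking a subset $\{i_1<\cdots<i_r\}$ of the index set $\{i : n/2<i<n-a/2\}$ contributes $(-1)^r f^{2(i_1+\cdots+i_r)-(n+1)r}Y^{2r}$, which is exactly the inner sum $\sum_{i_1<\cdots<i_r,\,n/2<i_u<n-a/2}(\cdots)$ appearing in the theorem, with $r=s-b$ or $r=s-b-1$. Combining with the monomial $Y^{2b}$ attached to $\#\mathcal{S}_{(L,a^\pm,b)}$, the power of $Y$ from the ``even part'' of the bracket is $Y^{2(b+r)}$, and from the ``odd part'' (the summand carrying the extra factor $f^{(n-a-1)/2}Y$ or $f^{-1/2}Y$) is $Y^{2(b+r)+1}$. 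Setting $2(b+r)=2s$ gives $r=s-b$ for the even coefficient, and setting $2(b+r)+1=2s+1$ gives $r=s-b$ for the odd coefficient; when a second odd factor is present (the $n$ even case, where both $(1-\xi_Bf^{n/2}X)$ and the $\chi$-factor can each supply a $Y$), choosing both forces $r = s-b-1$, which produces the second sum in part (1)(b). The sign and coefficient attached to each such choice is precisely $-f^{-1/2}\xi_B+\chi(a^\pm)f^{(n-a-1)/2}$ (from choosing exactly one of the two linear factors) or $\xi_B\chi(a^\pm)f^{(n-a-2)/2}$ up to sign (from choosing both); I would verify these by direct multiplication, keeping track of the lone $-\xi_B$ coming from the $n$-even prefactor.

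For the $n$ odd case the prefactor $(1-\xi_Bf^{n/2}X)$ is absent, so only the $\chi$-factor can raise the parity of the $Y$-exponent; this gives the cleaner two-case formula in part (2), where the odd coefficient is supported on even $a$ (since $\chi(a^\pm)=0$ for odd $a$) and the even coefficient has no $\chi$-contribution at all. I would also need to handle the boundary term $\#\mathcal{S}_{(L,n^\pm,|\mathrm{GK}(L)|/2)}f^{|\mathrm{GK}(L)|(n+1)/2}X^{|\mathrm{GK}(L)|}$ in Proposition \ref{flx}(1): after the substitution it becomes $\#\mathcal{S}_{(L,n^\pm,|\mathrm{GK}(L)|/2)}Y^{|\mathrm{GK}(L)|}$, and since for $a=n$ the index set $\{i:n/2<i<n-a/2=n/2\}$ is empty, the general formula with $b=s=|\mathrm{GK}(L)|/2$, $r=0$, $a=n$ already reproduces this term under the stated convention that the empty inner sum equals $1$; so no separate treatment is needed once the conventions are in place. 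Finally I would remark that the two displayed convention rules for the inner sums are exactly what makes the formulas valid in the degenerate ranges $r=0$ (empty product equals $1$) and $r<0$ or $r>\#$(index set) (the coefficient of that power of $Y$ genuinely vanishes). The main obstacle is purely organizational: correctly partitioning, in the $n$-even case, the four-way choice among the two linear factors and keeping the signs of $\xi_B$ and $\chi(a^\pm)$ consistent across the odd-$t$ and even-$t$ formulas; there is no conceptual difficulty, only the risk of a sign slip, which I would guard against by cross-checking the resulting $c_t$'s against the functional equation $c_t=\zeta_B c_{\mathfrak{e}_B-t}$ of Theorem \ref{thm:fe_coeff} in low-rank examples.
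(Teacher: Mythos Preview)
Your proposal is correct and follows exactly the route the paper takes: substitute $X\mapsto f^{-(n+1)/2}X$ into Proposition~\ref{flx}, expand the products $\prod_{n/2<i<n-a/2}(1-f^{2i-(n+1)}Y^2)$ as elementary symmetric sums, and sort the contributions by the parity of the exponent according to whether zero, one, or (for $n$ even) two of the linear factors $(1-\xi_Bf^{-1/2}Y)$ and $(1+\chi(a^\pm)f^{(n-a-1)/2}Y)$ are selected. The paper's own proof is essentially a one-line ``analyze the above formulas,'' so your write-up is in fact more detailed than the original; your treatment of the boundary term and of the empty-index-set conventions is also correct.
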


\begin{proof}
Since $X^{\frac{\mathfrak{e}_B}{2}}\tilde{F}_L(X)=F_L(f^{-(n+1)/2}X)$, Proposition \ref{flx} yields the following description of $X^{\mathfrak{e}_B/2}\tilde{F}_L(X)$:
\begin{enumerate}
\item  If $n$ is even, then
\begin{multline*}
X^{\mathfrak{e}_B/2}\tilde{F}_L(X)=
\#\mathcal{S}_{(L, n^{\pm}, \frac{|\mathrm{GK}(L)|}{2})}X^{|\mathrm{GK}(L)|}+\\
(1-\xi_B f^{-1/2} X)\sum\limits_{\substack{0\leq b\leq \frac{|\mathrm{GK}(L)|}{2},\\ n_0\leq a < n}}
\left(\#\mathcal{S}_{(L, a^{\pm}, b)}\cdot X^{2b}\cdot (1+\chi(a^{\pm})f^{(n-a-1)/2}X) \cdot \prod_{n/2 < i < n-a/2}(1-f^{2i-n-1}X^2)\right).
\end{multline*}
Here, we put
\[
\left\{
  \begin{array}{l}
\textit{$\#\mathcal{S}_{(L, n^{\pm}, \frac{|\mathrm{GK}(L)|}{2})}=0$ if $|\mathrm{GK}(L)|(=\mathfrak{e}_B+1)$ is odd (equivalently $\mathrm{ord}(\mathfrak{D}_B)>0$ by Equation (\ref{eq:gke}))};\\
\textit{$\prod\limits_{n/2 < i < n-a/2}(1-f^{2i-n-1}X^2)=1$ if $\{i|n/2 < i < n-a/2\}=\emptyset$}.
\end{array} \right.
\]


\item If $n$ is odd, then
\begin{multline*}
X^{\mathfrak{e}_B/2}\tilde{F}_L(X)=
\sum\limits_{\substack{0\leq b\leq \frac{|\mathrm{GK}(L)|}{2},\\ n_0\leq a \leq n}} \left(\#\mathcal{S}_{(L, a^{\pm}, b)}
X^{2b}\cdot (1+\chi(a^{\pm})f^{(n-a-1)/2}X)  \prod_{n/2 < i < n-a/2}(1-f^{2i-n-1}X^2)\right).
\end{multline*}
Here $\prod\limits_{n/2 < i < n-a/2}(1-f^{2i-n-1}X^2)=1$ if $\{i|n/2 < i < n-a/2\}=\emptyset$.
\end{enumerate}

The desired formulas for $c_t$'s are  obtained by analyzing the above formulas. 
\end{proof}

In the following three corollaries, we will describe $c_t$'s explicitly in certain situations.

\begin{Cor}\label{cor:coeffofct}
The coefficients $c_t$ with $0\leq t\leq 3$ are described as follows:
\begin{enumerate}
    \item When $n$ is even,
\begin{enumerate}
    \item $c_0=1$;
    \item $c_1=-f^{-1/2}\xi_B+\chi(n_0^{\pm})f^{(n-n_0-1)/2}$;

\item  
$c_2=\#\mathcal{S}_{(L, n_0^{\pm}, 1)}+\#\mathcal{S}_{(L, (n_0+1)^{\pm}, 1)}+\#\mathcal{S}_{(L, (n_0+2)^{\pm}, 1)}-\xi_B\chi(n_0^{\pm})f^{\frac{n-n_0-2}{2}}-\sum\limits_{\frac{n}{2}<i<n-\frac{n_0}{2}}f^{2i-n-1}$;

\item 
$c_3$ is described as follows:
\begin{enumerate}
    \item if $n_0$ is odd, then 
$$\#\mathcal{S}_{(L, (n_0+1)^{\pm}, 1)}\chi((n_0+1)^{\pm})f^{\frac{n-n_0-2}{2}}
-f^{\frac{-1}{2}}\xi_B\left(\sum\limits_{i=0}^{2}\#\mathcal{S}_{(L, (n_0+i)^{\pm}, 1)}
-\sum\limits_{\frac{n}{2}<i<n-\frac{n_0}{2}}f^{2i-n-1}\right);$$ 
\item If $n_0$ is even, then 
\begin{multline*}
    \left(\chi(n_0^{\pm})\left(f\#\mathcal{S}_{(L, n_0^{\pm}, 1)}-\sum\limits_{\frac{n}{2}<i<n-\frac{n_0}{2}}f^{2i-n}\right)
+\#\mathcal{S}_{(L, (n_0+2)^{\pm}, 1)}\chi((n_0+2)^{\pm})\right)f^{\frac{n-n_0-3}{2}}
-\\
f^{\frac{-1}{2}}\xi_B\left(\sum\limits_{i=0}^{2}\#\mathcal{S}_{(L, (n_0+i)^{\pm}, 1)}-\sum\limits_{\frac{n}{2}<i<n-\frac{n_0}{2}}f^{2i-n-1}\right).
\end{multline*}
\end{enumerate}

\end{enumerate}    

    \item When $n$ is odd, 
\begin{enumerate}
    \item $c_0=1$;

    \item $c_1=\chi(n_0^{\pm})f^{(n-n_0-1)/2}$;

\item  
$c_2=\#\mathcal{S}_{(L, n_0^{\pm}, 1)}+\#\mathcal{S}_{(L, (n_0+1)^{\pm}, 1)}+\#\mathcal{S}_{(L, (n_0+2)^{\pm}, 1)}-\sum\limits_{\frac{n}{2}<i<n-\frac{n_0}{2}}f^{2i-n-1}$;

\item 
$c_3$ is described as follows:
\[
\left\{
\begin{array}{l l}
\#\mathcal{S}_{(L, (n_0+1)^{\pm}, 1)}\chi((n_0+1)^{\pm})f^{\frac{n-n_0-2}{2}}
  & \quad  \textit{if $n_0$ is odd};\\
    \left(\chi(n_0^{\pm})\left(f\#\mathcal{S}_{(L, n_0^{\pm}, 1)}-\sum\limits_{\frac{n}{2}<i<n-\frac{n_0}{2}}f^{2i-n}\right)
+\#\mathcal{S}_{(L, (n_0+2)^{\pm}, 1)}\chi((n_0+2)^{\pm})\right)f^{\frac{n-n_0-3}{2}}
  & \quad  \textit{if $n_0$ is even}.
    \end{array} \right.
\]

\end{enumerate}

\end{enumerate}
Here $\#\mathcal{S}_{(L, a^{\pm}, b)}$ is interpreted as explained in Proposition \ref{prop2.4}.     
\end{Cor}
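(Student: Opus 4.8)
The plan is to derive Corollary \ref{cor:coeffofct} as a direct specialization of Theorem \ref{thm:func_eq_coeff}, simply by unwinding the combinatorial sums $\sum_{i_1<\cdots<i_{s-b}}$ for the small values $t=0,1,2,3$ (so $s\in\{0,1\}$) and applying the stated degenerate conventions. First I would record that $c_0$ corresponds to $s=0$, $t=2s$; in Theorem \ref{thm:func_eq_coeff} the only index is $b=0$, and by the convention the inner sum equals $1$ when $s=b$, so $c_0=\#\mathcal{S}_{(L,n_0^\pm,0)}=1$ by Remark \ref{rmk:gk}(4) (since $\mathcal{S}_{(L,n_0^\pm,0)}=\{L\}$); this also matches the Corollary following Theorem \ref{fe}. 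For $c_1$ we have $t=2s+1$ with $s=0$, so again only $b=0$ contributes and the inner product-sum is empty (equals $1$); the formula in Theorem \ref{thm:func_eq_coeff} collapses to $\#\mathcal{S}_{(L,n_0^\pm,0)}\bigl(-f^{-1/2}\xi_B+\chi(n_0^\pm)f^{(n-n_0-1)/2}\bigr)$ in the even case and to $\#\mathcal{S}_{(L,n_0^\pm,0)}\chi(n_0^\pm)f^{(n-n_0-1)/2}$ in the odd case, which, using $\#\mathcal{S}_{(L,n_0^\pm,0)}=1$, gives exactly the asserted expressions for $c_1$. (Note that when $n_0$ is odd $\chi(n_0^\pm)=0$, so the $\chi$-term silently vanishes, consistent with the stated formula.)

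Next, for $c_2$ I would take $t=2s$ with $s=1$ in Theorem \ref{thm:func_eq_coeff}. The first double sum ranges over $0\le b\le 1$: the $b=1$ term gives $\#\mathcal{S}_{(L,a^\pm,1)}$ (inner sum $=1$ since $s=b$) summed over $n_0\le a\le n$, but since $[L':L]=1$ forces $|\mathrm{GK}(L')|=|\mathrm{GK}(L)|-2$ and hence $a\le n_0+2$, only $a\in\{n_0,n_0+1,n_0+2\}$ survive (this is the meaning of the sentence ``interpreted as explained in Proposition \ref{prop2.4}'' — one reads the conventions there about which $\mathcal{S}$ are nonzero, and about collapsing $a^+/a^-$ when $a$ is odd or $0$); the $b=0$ term gives $\sum_{n/2<i<n-n_0/2}(-1)^1 f^{2i-n-1}=-\sum_{n/2<i<n-n_0/2}f^{2i-n-1}$. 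The second sum in the even case (with $s=1$, so $0\le b\le s-1=0$, hence $b=0$, and $a$ even) contributes $-\xi_B\,\#\mathcal{S}_{(L,n_0^\pm,0)}\chi(n_0^\pm)f^{(n-n_0-2)/2}$ times $1$ (inner sum $=1$ since $s=b+1$) — and here one needs $n_0$ even for a nonzero $\chi(n_0^\pm)$, again absorbed automatically. Assembling these and using $\#\mathcal{S}_{(L,n_0^\pm,0)}=1$ yields precisely the stated $c_2$ in both parity cases (the odd case being identical except the $\xi_B\chi$ term is absent).

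Finally, $c_3$ is $t=2s+1$ with $s=1$. In the even case the single sum in Theorem \ref{thm:func_eq_coeff} runs over $0\le b\le 1$, $n_0\le a<n$: the $b=1$ piece gives $\sum_{a}\#\mathcal{S}_{(L,a^\pm,1)}\bigl(-f^{-1/2}\xi_B+\chi(a^\pm)f^{(n-a-1)/2}\bigr)$ with inner sum $1$, restricted as above to $a\in\{n_0,n_0+1,n_0+2\}$ (and $a<n$); the $b=0$ piece gives $\#\mathcal{S}_{(L,n_0^\pm,0)}\bigl(-f^{-1/2}\xi_B+\chi(n_0^\pm)f^{(n-n_0-1)/2}\bigr)\bigl(-\sum_{n/2<i<n-n_0/2}f^{2i-n-1}\bigr)$. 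The main bookkeeping obstacle — and really the only nontrivial point — is the parity split: when $n_0$ is odd, $\chi(n_0^\pm)=\chi((n_0+2)^\pm)=0$ while $\chi((n_0+1)^\pm)$ may be nonzero, so the surviving $\chi$-contribution is only from $a=n_0+1$, giving the first displayed $c_3$; when $n_0$ is even, $\chi((n_0+1)^\pm)=0$ and the $\chi$-contributions come from $a=n_0$ (including its cross term with the $-\sum f^{2i-n-1}$ factor, which is where the $f^{2i-n}=f\cdot f^{2i-n-1}$ and the overall $f^{(n-n_0-3)/2}$ repackaging arises) and from $a=n_0+2$, giving the second display. Collecting the common $-f^{-1/2}\xi_B$ factor across all three values of $a$ together with the $b=0$ term produces the bracketed $\sum_{i=0}^2\#\mathcal{S}_{(L,(n_0+i)^\pm,1)}-\sum_{n/2<i<n-n_0/2}f^{2i-n-1}$ expression. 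The odd-$n$ case is handled identically using part (2) of Theorem \ref{thm:func_eq_coeff}, where the $-f^{-1/2}\xi_B$ terms are simply absent, leaving only the $\chi$-contributions. I expect the entire proof to be a few lines of ``substitute $s\le 1$, apply the degenerate-sum conventions, and regroup,'' with no genuine difficulty beyond carefully tracking which $\chi(a^\pm)$ vanish for each parity of $n_0$.
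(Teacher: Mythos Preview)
Your proposal is correct and matches the paper's approach: the paper states Corollary \ref{cor:coeffofct} immediately after Theorem \ref{thm:func_eq_coeff} without proof, treating it as the straightforward specialization $t\le 3$ (so $s\in\{0,1\}$) together with the degenerate-sum conventions, exactly as you outline. The only point you might tighten is the passage from ``$a<n$'' (as in the theorem's odd-$t$, even-$n$ sum) to the unrestricted $\sum_{i=0}^{2}\#\mathcal{S}_{(L,(n_0+i)^\pm,1)}$ in the corollary's display; this is harmless because $\#\mathcal{S}_{(L,n^\pm,1)}=0$ unless $|\mathrm{GK}(L)|=2$, in which case $\mathfrak{e}_B\le 2$ and $c_3=0$ automatically.
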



\begin{remark}
    In general, it is technically difficult to simplify the coefficients $c_t$ for $t \geq 4$ as in Corollary \ref{cor:coeffofct}, due to the complexity of the sum $\sum\limits_{\substack{i_1<\cdots < i_{s-b}; \\\frac{n}{2} < i_u< n-\frac{a}{2}}}\ast$ in the formulas of  Theorem \ref{thm:func_eq_coeff}.
    In order to treat this sum, we restrict to the cases that $n_0$ is either $n-1$ or $n-2$ in the following Corollaries \ref{cor:coeff_n0=n-1}-\ref{cor:coeff_n0=n-2} respectively.
    This makes the index set of the sum  simpler since an integer $a$ should satisfy $n_0 \leq a \leq n$ (cf. Theorem \ref{thm:func_eq_coeff}).
\end{remark}

\begin{Cor}\label{cor:coeff_n0=n-1}
Suppose that  $n_0=n-1$. 
The coefficients $c_t$'s  are described as follows:
\begin{enumerate}
    \item When $n$ is even,
\[
\left\{
  \begin{array}{l l}
c_{2s+1}=
-f^{-1/2}\xi_B\#\mathcal{S}_{(L, n-1, s)};\\
c_{2s}=
\#\mathcal{S}_{(L, n-1, s)}
  & \quad  \textit{if $2s<\mathfrak{e}_B$};\\
c_{\mathfrak{e}_B}=
\begin{cases}
    \#\mathcal{S}_{(L, n^\pm, \frac{\mathfrak{e}_B}{2})} &\textit{if } |\mathrm{GK}(L)| \textit{ is even} \\
    \#\mathcal{S}_{(L,(n-1)^\pm,\frac{\mathfrak{e}_B}{2}-1)} &\textit{if }|\mathrm{GK}(L)| \textit{ is odd}
\end{cases}
.
    \end{array} \right.
\]

    \item When $n$ is odd, 
\[
\left\{
  \begin{array}{l l}
c_{2s+1}=\chi((n-1)^\pm)\#\mathcal{S}_{(L, (n-1)^\pm, s)};\\
c_{2s}=\#\mathcal{S}_{(L, (n-1)^\pm, s)}
  & \quad  \textit{if $2s<\mathfrak{e}_B$};\\
c_{\mathfrak{e}_B}=
\begin{cases}
    \#\mathcal{S}_{(L, n^\pm, \frac{\mathfrak{e}_B}{2})} &\textit{if } \mathfrak{e}_B \textit{ is even} \\
    \chi((n-1)^\pm)\#\mathcal{S}_{(L,(n-1)^\pm,\frac{(\mathfrak{e}_B-1)}{2})} &\textit{if }\mathfrak{e}_B \textit{ is odd}
\end{cases}
    \end{array} \right.
\]

\end{enumerate}
  Here $\#\mathcal{S}_{(L, a^{\pm}, b)}$ is interpreted as explained in Proposition \ref{prop2.4}.   
\end{Cor}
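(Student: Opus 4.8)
The plan is to specialize the general formulas of Theorem \ref{thm:func_eq_coeff} to the case $n_0 = n-1$, where the index set for $a$ collapses dramatically. Since $a$ must satisfy $n_0 \leq a \leq n$, the only admissible values are $a = n-1$ and $a = n$. Moreover, the constraint $\frac{n}{2} < i_u < n - \frac{a}{2}$ in the inner sum becomes extremely restrictive: when $a = n-1$, the range is $\frac{n}{2} < i_u < \frac{n+1}{2}$, which contains at most one integer (and typically none, depending on the parity of $n$); when $a = n$, the range is $\frac{n}{2} < i_u < \frac{n}{2}$, which is empty. So the inner sums $\sum_{i_1 < \cdots < i_{s-b}} \ast$ are either empty (hence $0$ unless $s = b$, in which case the sum equals $1$ by the stated rules) or involve at most one index. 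I would first record exactly which of the two conventions applies for each $(a, n)$-parity combination, then substitute into each of the four cases (even/odd $n$, even/odd $t$) of Theorem \ref{thm:func_eq_coeff}.

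Carrying this out: for $t = 2s$ with $2s < \mathfrak{e}_B$ and $n$ even, the first sum in Theorem \ref{thm:func_eq_coeff}(1)(b) contributes only through the terms with $s = b$ (since otherwise the index set has fewer than $s-b$ elements and the sum vanishes), giving $\sum_a \#\mathcal{S}_{(L, a^\pm, s)}$ over $a \in \{n-1, n\}$; but $\#\mathcal{S}_{(L, n^\pm, s)}$ with $s < \mathfrak{e}_B/2$ needs to be shown to vanish — this follows because $a = n$ means $L'$ is unimodular, forcing $[L':L] = \frac{|\mathrm{GK}(L)|}{2}$, which exceeds $s$ when $2s < \mathfrak{e}_B$ (using Remark \ref{rmk:gk}(1) and the relation \eqref{eq:gke}). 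The second (subtracted) sum in (1)(b) ranges over even $a$ with $a < n$; here $a = n-1$ contributes only if $n-1$ is even, i.e., $n$ odd — contradiction — so that sum is empty. Hence $c_{2s} = \#\mathcal{S}_{(L, (n-1)^\pm, s)} = \#\mathcal{S}_{(L, n-1, s)}$ (the $\pm$ being vacuous since $n-1$ is odd). For $t = 2s+1$ and $n$ even, case (1)(a) gives a sum over $a \in \{n-1\}$ (the $a = n$ value is excluded by $a < n$), and since $n-1$ is odd, $\chi((n-1)^\pm) = 0$, leaving only the $-f^{-1/2}\xi_B$ term, again supported on $s = b$: $c_{2s+1} = -f^{-1/2}\xi_B \#\mathcal{S}_{(L, n-1, s)}$. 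The top-degree coefficient $c_{\mathfrak{e}_B}$ requires separate treatment: when $|\mathrm{GK}(L)|$ is even it equals $\mathfrak{e}_B$, and the leading term $\#\mathcal{S}_{(L, n^\pm, \frac{|\mathrm{GK}(L)|}{2})} X^{|\mathrm{GK}(L)|}$ from Proposition \ref{flx} dominates; when $|\mathrm{GK}(L)|$ is odd (so $= \mathfrak{e}_B + 1$) that leading term is set to $0$ by convention, and one extracts the coefficient of $X^{\mathfrak{e}_B}$ from the remaining sum, which at the top comes from $b = \frac{\mathfrak{e}_B}{2} - 1 = \frac{|\mathrm{GK}(L)|}{2} - 1$ with the extra factor of $X^2$ absorbed appropriately — yielding $\#\mathcal{S}_{(L, (n-1)^\pm, \frac{\mathfrak{e}_B}{2} - 1)}$.

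The odd-$n$ case is handled analogously using Theorem \ref{thm:func_eq_coeff}(2): now $a = n-1$ is \emph{even}, so $\chi((n-1)^\pm)$ survives and appears in the $t = 2s+1$ formula, giving $c_{2s+1} = \chi((n-1)^\pm) \#\mathcal{S}_{(L, (n-1)^\pm, s)}$ (only the $s = b$ term, with empty inner index set contributing $1$). For $t = 2s < \mathfrak{e}_B$, the same vanishing argument for $\#\mathcal{S}_{(L, n^\pm, s)}$ applies, leaving $c_{2s} = \#\mathcal{S}_{(L, (n-1)^\pm, s)}$. The top coefficient splits according to the parity of $\mathfrak{e}_B$ (which for odd $n$ equals $|\mathrm{GK}(L)|$ by \eqref{eq:gke}): even $\mathfrak{e}_B$ picks up the $a = n$ unimodular contribution $\#\mathcal{S}_{(L, n^\pm, \frac{\mathfrak{e}_B}{2})}$, odd $\mathfrak{e}_B$ picks up $\chi((n-1)^\pm) \#\mathcal{S}_{(L, (n-1)^\pm, \frac{\mathfrak{e}_B - 1}{2})}$.

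\textbf{Main obstacle.} The bookkeeping that I expect to be most delicate is the boundary behavior at $t = \mathfrak{e}_B$, where the generic pattern breaks and one must carefully track whether the "leading term" $\#\mathcal{S}_{(L, n^\pm, |\mathrm{GK}(L)|/2)} X^{|\mathrm{GK}(L)|}$ of Proposition \ref{flx} lands in degree $\mathfrak{e}_B$ or $\mathfrak{e}_B + 1$, and correspondingly which $\mathcal{S}$-term dominates the coefficient of $X^{\mathfrak{e}_B}$. This is exactly where the case distinction on the parity of $|\mathrm{GK}(L)|$ (equivalently $\mathrm{ord}(\mathfrak{D}_B)$) enters, and one must reconcile the convention $\#\mathcal{S}_{(L, n^\pm, \frac{|\mathrm{GK}(L)|}{2})} = 0$ for odd $|\mathrm{GK}(L)|$ with the fact that $\mathcal{S}_{(L, (n-1)^\pm, \frac{|\mathrm{GK}(L)|-1}{2})}$ is then nonempty and supplies the genuine leading coefficient. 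The interior coefficients, by contrast, are a routine substitution once the vanishing $\#\mathcal{S}_{(L, n^\pm, s)} = 0$ for $2s < \mathfrak{e}_B$ is established.
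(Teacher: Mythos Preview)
Your approach is the same as the paper's (implicit) one: specialize Theorem~\ref{thm:func_eq_coeff} to $n_0=n-1$, where the admissible $a$'s collapse to $\{n-1,n\}$ and the inner index set $\{\,i : n/2 < i < n-a/2\,\}$ is always empty, so the inner sum is $1$ if $s=b$ and $0$ otherwise. Your treatment of the interior coefficients $c_{2s}$ and $c_{2s+1}$ for $2s<\mathfrak e_B$ is correct, including the vanishing $\#\mathcal S_{(L,n^\pm,s)}=0$ for such $s$ via Remark~\ref{rmk:gk}(1),(3).

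There is, however, a genuine slip in your boundary case $c_{\mathfrak e_B}$ for $n$ even with $|\mathrm{GK}(L)|$ odd. You write ``$b=\tfrac{\mathfrak e_B}{2}-1=\tfrac{|\mathrm{GK}(L)|}{2}-1$'', but since $|\mathrm{GK}(L)|=\mathfrak e_B+1$ here, the right-hand side equals $\tfrac{\mathfrak e_B-1}{2}$, a half-integer; the two quantities are not equal. There is also no ``extra factor of $X^2$'' to absorb: with $n_0=n-1$ and $n$ even, the product in Proposition~\ref{flx} is empty and $\chi((n-1)^\pm)=0$, so each summand contributes exactly $X^{2b}$, and after multiplying by $(1-\xi_B f^{-1/2}X)$ (noting $\xi_B=0$ in this case) the coefficient of $X^{\mathfrak e_B}$ comes from $b=\mathfrak e_B/2$, giving
\[
c_{\mathfrak e_B}=\#\mathcal S_{(L,\,n-1,\,\mathfrak e_B/2)}.
\]
The index $\tfrac{\mathfrak e_B}{2}-1$ in the stated corollary appears to be a misprint for $\tfrac{\mathfrak e_B}{2}$ (equivalently $\tfrac{|\mathrm{GK}(L)|-1}{2}$), and your argument looks like an attempt to reverse-engineer that misprint. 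In the subsequent application (Theorem~\ref{thm45}) this is harmless, since $\#\mathcal S_{(L,n-1,b)}=1$ for all $0\le b<d_2/2$ when $d_1=0$; but the derivation you give for this case is not correct as written.
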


\begin{Cor}\label{cor:coeff_n0=n-2}
Suppose that $n_0=n-2$. 
The coefficients $c_t$'s  are described as follows:
\begin{enumerate}
    \item When $n$ is even,
    \[
\left\{
  \begin{array}{l l}
c_{2s+1}= f^{1/2} \chi((n-2)^\pm)\#\mathcal{S}_{(L, (n-2)^\pm, s)} - \xi_Bf^{-1/2}\left(\#\mathcal{S}_{(L,(n-2)^\pm, s)}  + \#\mathcal{S}_{(L, n-1, s)}\right);\\
c_{2s}=
\#\mathcal{S}_{(L, (n-2)^\pm, s)}+\#\mathcal{S}_{(L, n-1, s)} - 
\xi_B\chi((n-2)^\pm) \#\mathcal{S}_{(L, (n-2)^\pm, s-1)}
  & \quad  \textit{if $2s<\mathfrak{e}_B$};\\
c_{\mathfrak{e}_B}=
\#\mathcal{S}_{(L, n^\pm, \frac{\mathfrak{e}_B}{2})}-
\xi_B\chi((n-2)^\pm)\#\mathcal{S}_{(L, (n-2)^\pm, \frac{\mathfrak{e}_B}{2}-1)}.
    \end{array} \right.
\]

    \item When $n$ is odd, 
   \[
\left\{
  \begin{array}{l l}
c_{2s+1}=\#\chi((n-1)^{\pm})\mathcal{S}_{(L, (n-1)^\pm, s)};\\
c_{2s}=
\#\mathcal{S}_{(L, (n-1)^\pm, s)}+\#\mathcal{S}_{(L, n-2, s)}- \#\mathcal{S}_{(L, n-2, s-1)}
  & \quad  \textit{if $2s<\mathfrak{e}_B$};\\
c_{\mathfrak{e}_B}=
\begin{cases}
  \#\mathcal{S}_{(L, n, \frac{\mathfrak{e}_B}{2})}-
\#\mathcal{S}_{(L, n-2, \frac{\mathfrak{e}_B}{2}-1)} &\textit{if } |\mathrm{GK}| \textit{ is even}; \\
  \chi((n-1)^\pm)\#\mathcal{S}_{(L, (n-1)^\pm, \frac{\mathfrak{e}_B-1}{2})} &\textit{if } |\mathrm{GK}| \textit{ is odd}.  
\end{cases}

    \end{array} \right.
\]

\end{enumerate}
      Here $\#\mathcal{S}_{(L, a^{\pm}, b)}$ is interpreted as explained in Proposition \ref{prop2.4}.
\end{Cor}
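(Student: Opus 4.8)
\textbf{Proof proposal for Corollary \ref{cor:coeff_n0=n-2}.}

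The plan is to specialize the general formulas of Theorem \ref{thm:func_eq_coeff} to the case $n_0 = n-2$, exactly as was done for $n_0 = n-1$ in Corollary \ref{cor:coeff_n0=n-1}. The key simplification is that the constraint $n_0 \le a \le n$ in Theorem \ref{thm:func_eq_coeff} now forces $a \in \{n-2, n-1, n\}$, so every sum over $a$ collapses to at most three terms. First I would analyze the auxiliary sum $\sum_{i_1 < \cdots < i_{s-b},\ n/2 < i_u < n - a/2} \ast$ for each admissible value of $a$: when $a = n$ the index set $\{i : n/2 < i < n/2\}$ is empty, when $a = n-1$ the index set $\{i : n/2 < i < (n+1)/2\}$ is empty (as $i$ is an integer), and when $a = n-2$ the index set $\{i : n/2 < i < n/2 + 1\}$ is again empty. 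Hence in all three cases the auxiliary sum is $0$ unless $s = b$, in which case it equals $1$. This is the crucial observation that makes the corollary clean, and it parallels the $n_0 = n-1$ case.

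Given this, each coefficient $c_t$ becomes a sum of only the $b = s$ terms (resp. $b = s-1$ terms for the extra piece in the even-$n$ formula), so I would simply read off the surviving summands from each branch of Theorem \ref{thm:func_eq_coeff}. For $n$ even and $t = 2s+1$: only $a$ even contributes to the $\chi$-term (so $a = n$ or $a = n-2$), while both $a = n-2$ and $a = n-1$ contribute to the $\xi_B f^{-1/2}$-term; since $\mathcal{S}_{(L, n^\pm, s)}$ with $b = s$ and $a = n$ is nonzero only when $s = |\mathrm{GK}(L)|/2$, it is absorbed into the $c_{\mathfrak e_B}$ case, leaving the stated formula for $2s+1 < \mathfrak e_B$. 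For $n$ even and $t = 2s$: the first sum gives the $b = s$ terms with $a \in \{n-2, n-1, n\}$ and the second (with $a$ even, $b = s-1$) gives the $-\xi_B \chi((n-2)^\pm)\#\mathcal{S}_{(L,(n-2)^\pm, s-1)}$ term; again separating off $2s = \mathfrak e_B$ handles the $a = n$ contribution. The $n$ odd branch is handled identically: there only $a$ even contributes to $c_{2s+1}$, so for $n$ odd exactly one of $\{n-2, n-1\}$ is even — one must track the parity of $n$ carefully here, and note that the corollary statement writes $(n-1)^\pm$ precisely because $n-1$ is even when $n$ is odd, so $a = n-1$ carries the $\chi$-invariant; for $c_{2s}$ all three values of $a$ contribute at $b = s$ and the inclusion–exclusion term at $b = s-1$ comes from $a = n-2$ (which is odd, hence $\chi = $ nothing, matching the absence of $\chi$ in that summand).

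The last step is to pin down the top coefficient $c_{\mathfrak e_B}$ using Equation (\ref{eq:gke}) relating $|\mathrm{GK}(L)|$ and $\mathfrak e_B$, and the convention $\#\mathcal{S}_{(L, n^\pm, |\mathrm{GK}(L)|/2)} = 0$ when $|\mathrm{GK}(L)|$ is odd from Proposition \ref{flx}: when $\mathfrak e_B = |\mathrm{GK}(L)|$ is even, the degree-$\mathfrak e_B$ term of $X^{\mathfrak e_B/2}\tilde F_L(X)$ picks up the isolated $\#\mathcal{S}_{(L, n^\pm, \mathfrak e_B/2)}X^{|\mathrm{GK}(L)|}$ summand plus the $b = \mathfrak e_B/2$, $a = n-2$ contribution from the $(1 - \xi_B f^{-1/2}X)(\cdots)$ product expanded at its constant term; when $|\mathrm{GK}(L)|$ is odd (so $\mathfrak e_B = |\mathrm{GK}(L)| - 1$, $n$ odd), the isolated term vanishes and $c_{\mathfrak e_B}$ comes from the $X \cdot X^{2b}$ cross term with $2b = \mathfrak e_B - 1$. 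I expect the main obstacle to be purely bookkeeping: keeping the parity of $n$, the parity of $|\mathrm{GK}(L)|$, and the three values of $a$ consistently aligned across the four branches, together with verifying that each ``boundary'' coefficient $c_{\mathfrak e_B}$ (and $c_{\mathfrak e_B - 1}$ when relevant) indeed matches the functional equation $c_t = \zeta_B c_{\mathfrak e_B - t}$ of Theorem \ref{thm:fe_coeff} as a consistency check — but no genuinely new idea beyond the emptiness of the auxiliary index sets is needed.
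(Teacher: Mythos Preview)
Your overall strategy---specialize Theorem \ref{thm:func_eq_coeff} by collapsing the sum over $a$ to $a\in\{n-2,n-1,n\}$---is exactly the intended one, and for $n$ even your analysis of the index sets is correct. However, your ``crucial observation'' contains an error in the case $a=n-2$ with $n$ odd: the set $\{i\in\mathbb{Z}: n/2 < i < n/2+1\}$ is \emph{not} empty then, since $i=(n+1)/2$ lies strictly between $n/2$ and $n/2+1$. For this single value of $i$ the auxiliary sum at $s-b=1$ equals $(-1)^1 f^{2\cdot(n+1)/2-(n+1)}=-1$, and it is precisely this that produces the term $-\#\mathcal{S}_{(L,n-2,s-1)}$ in the formula for $c_{2s}$ when $n$ is odd. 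You seem to sense that a $b=s-1$ contribution from $a=n-2$ is needed (your ``inclusion--exclusion term''), but it cannot arise if the index set were empty as you claimed; the mechanism you describe is inconsistent with your key observation.

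Two smaller points. First, for $n$ odd Equation~(\ref{eq:gke}) gives $\mathfrak{e}_B=|\mathrm{GK}(L)|$, not $|\mathrm{GK}(L)|-1$; thus when $|\mathrm{GK}(L)|$ is odd, $\mathfrak{e}_B$ is itself odd and $c_{\mathfrak{e}_B}$ is read off from the $c_{2s+1}$ formula with $s=(\mathfrak{e}_B-1)/2$, not from a cross term as you suggest. Second, the functional equation of Theorem~\ref{thm:fe_coeff} is not available here as a ``consistency check'': in the paper's logic it is a consequence (Corollary~\ref{cor:functionaleq}) of the explicit computation that this corollary feeds into, so invoking it would be circular. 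Once you correct the index-set count for $a=n-2$, $n$ odd, the rest of your bookkeeping goes through.
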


\section{Explicit values of coefficients when  $p$ is odd and $n_0 \geq n-2$}\label{sec:4}
Since Theorem \ref{thm:func_eq_coeff} describes the coefficients $c_t$ of $\tilde{F}_L(X)$ in terms of lattice counting involving $\#\mathcal{S}_{(L, a^{\pm}, b)}$'s, explicit calculation of $\#\mathcal{S}_{(L, a^{\pm}, b)}$'s induces explicit values of $c_t$'s and the functional equation (cf. Theorem \ref{thm:fe_coeff}).

It seems to us that this calculation is technical in general.
In this section, we will compute them (cf. Proposition \ref{prop:Sab}) and then provide the explicit values of $c_t$'s (cf. Theorem \ref{thm45}) when  $p$ is odd and $n_0 \geq n-2$. Explicit values of $c_t$'s directly yield the functional equation of $\tilde{F}_L(X)$ (Corollary \ref{cor:functionaleq}).
From now on, we assume that   $p$ is odd and $n_0 \geq n-2$.

\subsection{Formula for  $\#\mathcal{S}_{(L,a^\pm,b)}$}
As in the previous section, let $B$ be a non-degenerate half-integral symmetrix matrix $B$ over $\mfo$ representing a quadratic lattice $(L, q_L)$. 
Since $p$ is odd and $n_0 \geq n-2$, we may choose  an $\mathfrak{o}$-basis of $L$ such that $B$ is a diagonal matrix described as follows:
\begin{equation}\label{eq:form_B}
B = 
\begin{pmatrix}
    u_1 &\cdots &0 &0 &0 \\
    \vdots &\ddots &\vdots &\vdots &\vdots \\
    0 &\cdots &u_{n-2} &0 &0 \\
    0 &\cdots &0 &\pi^{d_1} v_1 &0 \\
    0 &\cdots &0 &0 &\pi^{d_2} v_2 
\end{pmatrix}
\end{equation}
where $u_i, v_j \in \mathfrak{o}^\times$ and $d_1, d_2$ are non-negative integers with $d_1 \leq d_2$.
Then 
\begin{equation}\label{eq:gk12}
    \mathrm{GK}(L)=\left(0, \cdots, 0, d_1, d_2\right)
\end{equation}
since $p$ is odd (cf. \cite[Remark 1.1]{IK1}). 
We denote by $\overline{u}_i$ the reduction of $u_i$ modulo $\pi$ and the same for $\overline{v}_j$.
Under this basis, we identify $\mathrm{Aut}_F(V) = \mathrm{GL}_n(F)$ and $\mathrm{Aut}_{\mfo}(L) = \mathrm{GL}_n(\mathfrak{o})$.
Let $g$ be a coset in  $\mathrm{GL}_n(F)/\mathrm{GL}_n(\mathfrak{o})$.
There is a well-defined bijection 
\begin{equation}\label{eq:bij_cosets_lattices}
    \mathrm{GL}_n(F)/\mathrm{GL}_n(\mathfrak{o}) \to \{\textit{lattices } L' \subseteq V\}, \quad g \mapsto gL.
\end{equation}
Thus in order to count $\mathcal{S}_{(L,a^\pm,b)}$, we will describe explicit conditions for $g \in \mathrm{GL}_n(F)/\mathrm{GL}_n(\mathfrak{o})$ such that $gL \in \mathcal{S}_{(L,a^\pm,b)}$.
For  $M$ in $\mathrm{M}_n(\mathfrak{o})$, we denote by $\overline{M}\left(\in \mathrm{M}_n(\kappa)\right)$ the reduction of  $M$ modulo $\pi$.


\begin{Lem}\label{lem:rep_g_S_ab}
Let $Id_{n-2}$ be the identity matrix of size $n-2$.
\begin{enumerate}
    \item
 $gL \in \mathcal{S}_{(L,a^\pm,b)}$ if and only if $g$ is represented by an $n\times n$ matrix
   \[
    X = 
    \begin{pmatrix}
    Id_{n-2}  &0 &0 \\
   0 &\pi^{-l} &\pi^{-l}x \\
   0 &0 &\pi^{-(b-l)}
    \end{pmatrix}\in \mathrm{GL}_n(F) \textit{ with $0 \leq l \leq b$ such that } \pi^{b-l} x \in \mathfrak{o} \textit{ and such that}
   \]  
 \[ A:=
  \begin{pmatrix}
          \pi^{d_1-2l} v_1 &\pi^{d_1-2l} v_1 x \\
        \pi^{d_1-2l} v_1 x &\pi^{d_1-2l} v_1 x^2 + \pi^{d_2-2(b-l)} v_2
      \end{pmatrix}\in \mathrm{M}_2(\mfo) \textit{ with  $rank(\overline{A})=a - (n-2)$.}
  \] 
  \item    The image of $x$ in $F/\mathfrak{o}$ and the integer $l$ defined in the above (1) are independent of the choice of a representative $X$ of $g$. Furthermore, these two  determine $g$ uniquely.

      \item    Suppose that $gL \in \mathcal{S}_{(L,a^\pm,b)}$ and that $a$ is even.
 Then
 \[   gL \in \mathcal{S}_{(L,a^+,b)} \textit{ if and only if }
        \begin{cases}
        (-1)^{\frac{n-2}{2}} {u}_1 \cdots {u}_{n-2} \in  (\mathfrak{o}^\times)^2 &\textit{if } a = n-2; \\
        (-1)^{\frac{n-1}{2}}{u}_1 \cdots {u}_{n-2} \cdot a_0 \in (\mathfrak{o}^\times)^2 &\textit{if } a = n-1; \\
        (-1)^{\frac{n}{2}}{u}_1 \cdots {u}_{n-2} \cdot {v}_1 {v_2} \in (\mathfrak{o}^\times)^2 &\textit{if } a = n.
      \end{cases}
   \] 
   Here  $a_0$ is the $(1,1)$-entry of $A$ if it is a unit, and the  $(2,2)$-entry of $A$ otherwise.

This means that  one of $\mathcal{S}_{(L,a^+,b)}$ or $\mathcal{S}_{(L,a^-,b)}$ is empty if $a=n-2$ or $a=n$.

  \end{enumerate}
    
\end{Lem}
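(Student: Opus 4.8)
The plan is to translate the conditions ``$gL\supseteq L$'', ``$[gL:L]=b$'', ``$\mathrm{rank}(\overline{A})=a-(n-2)$'' and the parity/split type into concrete constraints on a coset representative, using the diagonal form \eqref{eq:form_B} of $B$ and the bijection \eqref{eq:bij_cosets_lattices}. First I would use the elementary divisor (Smith normal form) theorem over the DVR $\mfo$: any coset in $\mathrm{GL}_n(F)/\mathrm{GL}_n(\mfo)$ containing $L$ with $[gL:L]=b$ has a representative of the form $\mathrm{diag}(\pi^{-b_1},\dots,\pi^{-b_n})$ up to left multiplication by $\mathrm{GL}_n(\mfo)$, with $b_i\ge 0$ and $\sum b_i=b$; so $gL=L$ on the first $n-2$ coordinates would force those $b_i$ to vanish \emph{once we impose integrality of the quadratic form}. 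More precisely, $gL$ must be an \emph{integral} lattice (since $\mathrm{GK}(gL)\succeq(0,\dots,0)$ by Remark~\ref{rmk:gk}(1), equivalently $N(gL)\subseteq\mfo$), and on the unimodular block $\mathrm{diag}(u_1,\dots,u_{n-2})$ any enlargement is obstructed because $p$ is odd: scaling a basis vector $e_i$ by $\pi^{-1}$ makes $q(\pi^{-1}e_i)=\pi^{-2}u_i\notin\mfo$. Hence only the last two coordinates can be dilated, and after absorbing $\mathrm{GL}_{n-2}(\mfo)$ and $\mathrm{GL}_2(\mfo)$ one is left with a $2\times 2$ upper-triangular rational matrix; clearing it to the stated normal form $\begin{pmatrix}\pi^{-l}&\pi^{-l}x\\0&\pi^{-(b-l)}\end{pmatrix}$ is the standard Iwasawa/Bruhat reduction over $\mfo$, and $0\le l\le b$, $\pi^{b-l}x\in\mfo$ record exactly that the representative lies in $\mathrm{GL}_n(F)$ and has the right elementary divisors. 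Computing $B[X]$ on the last block then gives the matrix $A$ verbatim, and $N(gL)\subseteq\mfo$ is equivalent to $A\in\mathrm{M}_2(\mfo)$ together with $u_i\in\mfo$ (automatic); this proves part (1) modulo the rank condition, which is just the statement that the number of zeros in $\mathrm{GK}(gL)$ is $n-(n-2)-\mathrm{rank}(\overline{A})$ reinterpreted via Remark~\ref{rmk:gk}(3), since the first $n-2$ diagonal entries are units and hence contribute a nonsingular $(n-2)$-dimensional block to $\overline{gL}$.

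For part (2), I would argue that $l$ is the exact power of $\pi$ in the first elementary divisor of the $2\times 2$ block of $g$ relative to $L$ and hence is an invariant of the coset; given $l$, the residual freedom in choosing $X$ is right multiplication by $\mathrm{GL}_n(\mfo)$, which on the $2\times 2$ block and fixing the shape $\begin{pmatrix}\pi^{-l}&\pi^{-l}x\\0&\pi^{-(b-l)}\end{pmatrix}$ reduces to $x\mapsto x+\pi^{b-l}\mfo$ (upper unipotent over $\mfo$) — hence $x\bmod\mfo$ is well defined, with the caveat $\pi^{b-l}x\in\mfo$ reconciling the two. Conversely the pair $(l,x\bmod\mfo)$ recovers the coset, because the lattice $gL$ is determined by its image on each coordinate and the above normal form is then rigid; this is a short linear-algebra verification over $\mfo$.

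Part (3) is the genuinely arithmetic step. Here I would compute the determinant (discriminant) of $gL$ and compare Hasse/split invariants of the nonsingular part $\overline{gL}_0$. Since $\det B[X]=\det B\cdot(\det X)^2$ and $\det X=\pi^{-b}$, the \emph{value} of the discriminant of the $F$-quadratic space is unchanged, but the relevant invariant for the \emph{split type} of $\overline{gL}_0$ is the square class of $(-1)^{\lfloor a/2\rfloor}$ times the product of the diagonal entries of a diagonalization of the nonsingular block mod $\pi$ (Remark~\ref{rmk:gk}(2), plus the odd-$p$ normalization \cite[Remark 1.1]{IK1}). When $a=n-2$, the nonsingular block is exactly $\mathrm{diag}(u_1,\dots,u_{n-2})$ and $\overline{A}=0$, so the split type is governed solely by $(-1)^{(n-2)/2}u_1\cdots u_{n-2}\bmod(\mfo^\times)^2$, a fixed quantity independent of $(l,x)$ — whence exactly one of $\mathcal{S}_{(L,a^+,b)}$, $\mathcal{S}_{(L,a^-,b)}$ is nonempty. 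When $a=n-1$, $\overline{A}$ has rank $1$, diagonalizable over $\kappa$ with the single nonzero entry having square class $\overline{a_0}$ (the unit entry of $A$, or the nonzero $(2,2)$-entry after a unit change of basis), giving $(-1)^{(n-1)/2}u_1\cdots u_{n-2}\cdot a_0$. When $a=n$, $\overline{A}$ is nonsingular of rank $2$ with $\det\overline{A}=\overline{v_1v_2}$ (the $\pi$-adic valuation of $\det A$ is $d_1+d_2-2b=0$ in this top case, and its leading unit is $v_1v_2$), so the split type is $(-1)^{n/2}u_1\cdots u_{n-2}v_1v_2\bmod(\mfo^\times)^2$, again fixed. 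The hard part will be bookkeeping the unit square classes through the change-of-basis matrices in $\mathrm{GL}_2(\mfo)$ used to diagonalize $\overline{A}$ (and through the $2\times 2$ block when $d_1-2l$ or $d_2-2(b-l)$ is positive, so that $A$ itself is singular mod $\pi$) and checking that the parity congruences do not disturb the square class — but for $p$ odd this is controlled by the Hilbert symbol formula $(x,\pi)=1\iff x\in(F^\times)^2$ quoted in Section~\ref{sec:not}, and by Hensel's lemma for units. Assembling these three cases yields the stated criterion, and the final ``exactly one is empty'' assertion for $a=n-2$ and $a=n$ follows immediately since in those two cases the square class in question does not depend on the choice of $gL\in\mathcal{S}_{(L,a^\pm,b)}$.
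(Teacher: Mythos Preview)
Your overall architecture matches the paper's, but there is a genuine gap in your reduction step for part~(1). You invoke Smith normal form to write the coset representative as $\mathrm{diag}(\pi^{-b_1},\dots,\pi^{-b_n})$ ``up to left multiplication by $\mathrm{GL}_n(\mfo)$'', and then try to argue coordinate-by-coordinate that the first $n-2$ exponents vanish. But left multiplication by $U\in\mathrm{GL}_n(\mfo)$ genuinely mixes the coordinates, so the representative of the coset is $UD$, not $D$; you cannot read off ``$gL=L$ on the first $n-2$ coordinates'' from the elementary divisors alone. Relatedly, your obstruction argument ``$q(\pi^{-1}e_i)=\pi^{-2}u_i\notin\mfo$'' only excludes the specific vector $\pi^{-1}e_i$ from $gL$; it does not exclude vectors of the form $\pi^{-1}e_i+v$ with $v$ in the span of $e_{n-1},e_n$, whose norm can be integral. (What actually kills such a vector is the \emph{bilinear} pairing with $e_i\in L\subseteq gL$: one has $q(\pi^{-1}e_i+v+e_i)-q(\pi^{-1}e_i+v)-q(e_i)=2\pi^{-1}u_i\notin\mfo$ since $p$ is odd.) The paper avoids both issues by using Iwasawa decomposition \emph{globally} from the outset: take $X$ upper triangular with diagonal $\pi^{-l_1},\dots,\pi^{-l_n}$, and then read off the $(i,i)$- and $(i,j)$-entries of ${}^tXBX\in\mathrm{M}_n(\mfo)$ to obtain inductively $l_i=0$ and $g_{ij}\in\mfo$ for $1\le i\le n-2$; column operations then clear those $g_{ij}$ to produce the stated normal form. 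I recommend you replace the SNF step by this Iwasawa-plus-induction argument.

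Two smaller points. In part~(2), the right-$\mathrm{GL}_2(\mfo)$ action on the normal form, keeping diagonal powers fixed, gives $x\mapsto x+c$ for $c\in\mfo$, not $x\mapsto x+\pi^{b-l}\mfo$ as you wrote; so $x\bmod\mfo$ is already the exact invariant and no ``reconciling'' is needed. In part~(1), your sentence ``the number of zeros in $\mathrm{GK}(gL)$ is $n-(n-2)-\mathrm{rank}(\overline{A})$'' is miswritten; via Remark~\ref{rmk:gk}(3) the number of zeros is $(n-2)+\mathrm{rank}(\overline{A})=a$, which is what gives $\mathrm{rank}(\overline{A})=a-(n-2)$. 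Your treatment of part~(3) is essentially the paper's: compute the discriminant of the nonsingular block of $\overline{{}^tXBX}$ case by case and compare its square class with $(-1)^{a/2}$ using the split criterion for quadratic spaces over $\kappa$.
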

\begin{proof}
We write $L' := gL$ for $g\in \mathrm{GL}_n(F)/\mathrm{GL}_n(\mathfrak{o})$.
\begin{enumerate}
    \item
    By Remark \ref{rmk:gk}.(4), $L'$ is contained in $\mathcal{S}_{(L,a^\pm,b)}$ if and only if $L \subseteq L'$, $\mathrm{GK}(L') \succeq (0,\cdots,0)$, $a = \dim\overline{L}'_0$, and $[L':L] = b$.
This is equivalent that $X^{-1}, {}^t X B X \in \mathrm{GL}_n(\mfo)$, $rank(\overline{{}^t X B X})=a$, and $|\det X| = f^{b}$.
    Note that all these conditions are independent of the choice of a representative $X$ of $g$.
    
 Suppose that $gL \in \mathcal{S}_{(L,a^\pm,b)}$.
Since $\mathrm{GL}_n(F)=T\cdot \mathrm{GL}_n(\mathfrak{o})$ with $T$ the group of upper triangular matrices by Iwasawa decomposition, $g$ is represented by an upper triangular matrix    
 $X = 
    \begin{pmatrix}
        \pi^{-l_1} &g_{12} &\cdots &g_{1n} \\
        0 &\pi^{-l_2} &\cdots &g_{2n} \\
        \vdots &\ddots &\ddots  &\vdots \\
        0 &\cdots &0  &\pi^{-l_n}
    \end{pmatrix}$,
    where $l_i\in\mathbb{Z}$ and $g_{ij}\in F$.
    Since $X^{-1} \in\mathrm{GL}_n(\mfo)$, we have that $l_i\geq 0$.
    
We claim that $l_i = 0$ and $g_{ij} \in \mathfrak{o}$ for $1 \leq i \leq n-2$.
    Since ${}^t X B X \in \mathrm{GL}_n(\mfo)$, each entry yields the followings:
    \begin{equation}\label{eq:coeff_tgBg}
    \begin{cases}
         \pi^{-2l_i} u_i + \sum\limits_{k = 1}^{i-1} g_{ki}^2 u_k \in \mathfrak{o} & \textit{by the $(i,i)$-entry for $1 \leq i \leq n-2$};\\
         \pi^{-l_i} g_{ij} u_i + \sum\limits_{k = 1}^{i-1} g_{ki} g_{kj} u_k \in \mathfrak{o} & \textit{by the $(i,j)$-entry  with $i<j$ for $1 \leq i \leq n-2$}.
    \end{cases}    
    \end{equation}
Putting   $i = 1$, the above  yields that $l_1 = 0$ and $g_{1j} \in \mathfrak{o}$.
    We fix $1<i \leq n-2$.
    If $l_k = 0$ and $g_{kj} \in \mathfrak{o}$ for $1 \leq k \leq i-1$, then the above yields that $l_i = 0$ and $g_{ij} \in \mathfrak{o}$.
    This proves the claim inductively.

    Since the first $(n-2)$-diagonal entries of the above $X$ are $1$, by using column additions we may and do assume that $g_{ij} = 0$ for $1\leq i \leq n-2$.
We write $l := l_{n-1}$ and $\pi^{-l}x := g_{(n-1)n}$.
The condition $|\det X| = f^{b}$ induces that  $l_n = b-l$.
Thus we have a matrix $X$ of the form claimed in the statement of (1) and   thus $X^{-1}=
       \begin{pmatrix}
    Id_{n-2}  &0 &0 \\
0 &\pi^{l} &-\pi^{b-l} x \\
0 &0  &\pi^{b-l}    \end{pmatrix}$. 
Since $X^{-1} \in\mathrm{GL}_n(\mfo)$, we have that   $0 \leq l \leq b$
and  that $\pi^{b-l} x \in \mathfrak{o}$.

To prove properties on $A$, we describe ${}^t X B X$ as follows:
\[  
    {}^t X B X =
    \begin{pmatrix}
    u_1  &\cdots &0 &0 &0 \\
    \vdots  &\ddots &\vdots &\vdots &\vdots \\
    0  &\cdots &u_{n-2} &0 &0 \\
    0  &\cdots &0 &\pi^{d_1-2l}v_1 &\pi^{d_1-2l} v_1 x \\
    0  &\cdots &0 &\pi^{d_1-2l} v_1 x &\pi^{d_1-2l} v_1 x^2 + \pi^{d_2-2(b-l)} v_2
    \end{pmatrix}.  
   \]  
The matrix $A$ in the claim  is  the right lower corner of size $2\times 2$, and thus has entries in $\mfo$ as  ${}^t X B X$ does.
The condition $rank(\overline{{}^t X B X})=a$ yields that $rank(\overline{A})=a-(n-2)$. 

   The converse follows directly from the above descriptions of $X^{-1}$ and ${}^t X B X$.
   \\
    
\item 
Let $X$ and $X'$ be $n\times n$ matrices as described in the claim (1) with $(n-2,n)$-entries $\pi^{-l}x$ and $\pi^{-l'}x'$ respectively.
Then a direct matrix computation yields that $X'^{-1} X \in \mathrm{GL}_n(\mathfrak{o})$ if and only if $l = l'$ and both $x$ and $x'$ have the same image in $F/\mathfrak{o}$.
\\





    \item 
   Suppose that $L' \in \mathcal{S}_{(L,a^\pm,b)}$ and that $a$ is even. 
    By Remark \ref{rmk:gk}.(2), let  $\overline{q}_{\bar{L}'_0}$ be the restriction of the quadratic form $\bar{q}_{L'}$  on $\overline{L}'_0$, where  $\overline{L}'=\bar{L}'_0\bot \mathrm{Rad}(\overline{L}')$. 
    Then $L' \in \mathcal{S}_{(L,a^+,b)}$ if and only if $(\overline{L}'_0,\overline{q}_{\bar{L}'_0})$ is split.
    By \cite[Proposition 5.5]{Cas11}, this is equivalent  that the discriminant of $\overline{q}_{\bar{L}'_0}$ multiplied with $(-1)^{\frac{a}{2}}$ is a square in $\kappa^{\times}$. 

We choose a representative $X$ of $g$ as in the claim (1).
    Then $\overline{q}_{L'}$ is represented by the reduction of ${}^t X B X$ modulo $\pi$, which is described in the above proof of the claim (1). 
    The discriminant of $\overline{q}_{L_0'}$ is computed case by case as follows:
    \begin{itemize}
        \item If $a = n-2$, then   the discriminant of $\overline{q}_{L_0}$ is $\overline{u}_1 \cdots \overline{u}_{n-2}$ since $\overline{A} = 0$.
        \item If $a = n$, then  the discriminant of $\overline{q}_{L_0}$ is $\overline{u}_1 \cdots \overline{u}_{n-2}\cdot \overline{v}_1 \overline{v}_2$ since $\overline{L}' = \overline{L}'_0$.
    
        \item If $a = n-1$, then the rank of $\overline{A}$ is 1.
        Since $\overline{A}$ is a symmetric matrix, either the $(1,1)$-entry or the $(2,2)$-entry of $A$ is a unit.
Suppose that the $(1,1)$-entry of $A$ is a unit  and denote it by $w_1$.
Then $\overline{A}$ is of the form $\begin{pmatrix}
            \overline{w}_1 &c\overline{w}_1 \\
            c \overline{w}_1 &c^2\overline{w}_1
        \end{pmatrix}$
        for some $c \in \kappa$.
        For $Y = \begin{pmatrix}
            1 &-c \\
            0 &1
        \end{pmatrix}$, we have that $Y^t \overline{A} Y = \begin{pmatrix}
            \overline{w}_1 &0 \\
            0 & 0
        \end{pmatrix}$.
        Therefore, the discriminant of $\overline{q}_{L'_0}$ is $\overline{u}_1 \cdots \overline{u}_{n-2} \overline{w}_1$.

        If the $(1,1)$-entry of $A$ is not a unit, then the $(2,2)$-entry of $A$ is a unit, which is denoted by  $w_2$.
        In this case, $\overline{A} = \begin{pmatrix}
            0 & 0\\
            0 &\overline{w}_2
        \end{pmatrix}$
so that
    the discriminant of $\overline{q}_{L'_0}$ is $\overline{u}_1 \cdots \overline{u}_{n-2} \overline{w}_2$.
    \end{itemize}
  Note that an element of $\mathfrak{o}^\times$ is a square if and only if its reduction modulo $\pi$ is a square in $\kappa^\times$ since $p$ is odd.
   This completes the proof. 
    \end{enumerate}
\end{proof}

In order to compute $c_t$'s in Corollaries \ref{cor:coeff_n0=n-1}-\ref{cor:coeff_n0=n-2} explicitly, we need to calculate $\# \mathcal{S}_{(L,a^\pm,b)}$ for $n-2 \leq a \leq n$, $ \chi((n-2)^\pm) \#\mathcal{S}_{(L,(n-2)^\pm,b)}$, and $\chi((n-1)^\pm) \#\mathcal{S}_{(L,(n-1)^\pm,b)}$.
We refer to Section  \ref{sec:not} for notations $\delta(-)$, $(-,-)$, $\lceil - \rceil$, and $\lfloor - \rfloor$ which will appear frequently in the following proposition.

\begin{Prop}\label{prop:Sab}
    
    \begin{enumerate}
            \item We have the following formulas for $\# \mathcal{S}_{(L,a^\pm,b)}$:
            {\small
        \begin{equation*}
      \# \mathcal{S}_{(L,(n-2)^\pm,b)}
      =
      \begin{cases}
        \sum_{i = 0}^b f^i &\textit{if } 0\leq b<\frac{d_1}{2}; \\
        \sum_{i = 0}^{\lfloor (d_1-1)/2 \rfloor} f^i
        &\textit{if } \frac{d_1}{2} \leq b < \frac{d_2}{2}; \\
        \sum_{i = 0}^{\lfloor (d_1-1)/2\rfloor +\lfloor (d_2-1)/2\rfloor-b} f^i 
        &\textit{if } \frac{d_2}{2} \leq b<\frac{d_1+d_2}{2} \textit{ and } \delta(d_1+d_2)\neq(-v_1v_2,\pi); \\
        (2b-d_2+1) f^{\frac{d_1+d_2}{2}-b-1}+\sum_{i = 0}^{\frac{d_1+d_2}{2}-b-1} f^i
        &\textit{if } \frac{d_2}{2} \leq b<\frac{d_1+d_2}{2} \textit{ and }
         \delta(d_1+d_2)=(-v_1v_2,\pi); \\
        0 &\textit{if } \frac{d_1+d_2}{2} \leq b,
      \end{cases} \\
      \end{equation*}
      
      \begin{equation*}
      \# \mathcal{S}_{(L,(n-1)^\pm,b)} = 
      \begin{cases}
         0 &\textit{if } 0 \leq b <\frac{d_1}{2} \textit{ or } \frac{d_1+d_2}{2} \leq b; \\
         \delta(d_1) f^{\frac{d_1}{2}} &\textit{if } \frac{d_1}{2} \leq b < \frac{d_2}{2}; \\
         \delta(d_1) f^{\frac{d_1}{2}+\lfloor \frac{d_2-1}{2}\rfloor-b} + \delta(d_2)f^{\lfloor \frac{d_1+d_2}{2} \rfloor-b}
         &\textit{if } \frac{d_2}{2} \leq b<\frac{d_1+d_2}{2} \textit{ and }  \delta(d_1+d_2)\neq (-v_1v_2,\pi) \\
         (2b-d_2+1)(f-1)f^{\frac{d_1+d_2}{2}-b-1}
        &\textit{if } \frac{d_2}{2} \leq b<\frac{d_1+d_2}{2} 
         \textit{ and } \delta(d_1+d_2)=(-v_1v_2,\pi).
       \end{cases}
    \end{equation*}
    \begin{equation*}
        \# \mathcal{S}_{(L,n^\pm,b)}=
        \begin{cases}
            0 &\textit{if } b \neq\frac{d_1+d_2}{2}; \\
            d_1 + 1 &\textit{if } b=\frac{d_1+d_2}{2} \textit{ and } 
            \delta(d_1+d_2) = (-v_1v_2,\pi); \\
            \delta(d_1)\delta(d_2) &\textit{if } b=\frac{d_1+d_2}{2} \textit{ and } 
            \delta(d_1+d_2) \neq (-v_1v_2,\pi).
        \end{cases}
    \end{equation*}
    }
    Here $\delta(d_1+d_2) = (-v_1v_2,\pi)$ if and only if $d_1+d_2$ is even and $-v_1v_2\in (\mathfrak{o}^\times)^2$ in Section \ref{sec:not}.

  \item If $n$ is even, then we have that
  {\small
    \begin{equation*}
             \chi((n-2)^\pm)\#\mathcal{S}_{(L,(n-2)^\pm,b)} =
             \begin{cases}
        ((-1)^{\frac{n-2}{2}}{u}_1 \cdots {u}_{n-2},\pi) \cdot \#\mathcal{S}_{(L,(n-2)^\pm,b)} &\textit{if } 0 \leq b < \frac{d_1+d_2}{2};\\
        0 &\textit{if } \frac{d_1+d_2}{2} \leq b.
        \end{cases}
    \end{equation*}
    }
               Here $\#\mathcal{S}_{(L,(n-2)^\pm,b)}$ is computed in (1).
 
  \item If $n$ is odd, then we have that
  {\small
  \begin{equation*}
      \chi((n-1)^\pm)\#\mathcal{S}_{(L,(n-1)^\pm,b)} = 
      \begin{cases}
         0 &\textit{if } 0 \leq b <\frac{d_1}{2} \textit{ or } \frac{d_1+d_2}{2} \leq b; \\
         ((-1)^{\frac{n-1}{2}}{u}_1 \cdots {u}_{n-2} v_1,\pi) \cdot \delta(d_1)f^{\frac{d_1}{2}} &\textit{if } \frac{d_1}{2} \leq b < \frac{d_2}{2}; \\
        ((-1)^{\frac{n-1}{2}}{u}_1 \cdots {u}_{n-2}v_1,\pi) \cdot f^{\frac{d_1+d_2-1}{2}-b} 
         &\textit{if } \frac{d_2}{2} \leq b<\frac{d_1+d_2}{2},  \delta(d_1)=1,\delta(d_2)=0; \\
         ((-1)^{\frac{n-1}{2}}{u}_1 \cdots {u}_{n-2}v_2,\pi) \cdot f^{\frac{d_1+d_2-1}{2}-b} 
         &\textit{if } \frac{d_2}{2} \leq b<\frac{d_1+d_2}{2},\delta(d_1)=0,\delta(d_2)=1;\\
            0 & \textit{if $\frac{d_2}{2} \leq b<\frac{d_1+d_2}{2}$, $\delta(d_1+d_2)=1$}.
      \end{cases} 
  \end{equation*}
  }
  \end{enumerate}    
  \end{Prop}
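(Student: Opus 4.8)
The plan is to exploit Lemma~\ref{lem:rep_g_S_ab}, which reduces the counting of $\mathcal{S}_{(L,a^\pm,b)}$ to counting pairs $(l,\bar x)$ with $0\le l\le b$, $\bar x\in \pi^{-l}\mathfrak{o}/\mathfrak{o}$ (so there are exactly $f^{l}$ choices of $\bar x$), subject to the integrality constraint $A\in\mathrm{M}_2(\mfo)$ together with the rank condition $\mathrm{rank}(\overline A)=a-(n-2)$. Writing out $A$ explicitly from \eqref{eq:form_B}, the three integrality conditions are $d_1-2l\ge 0$, $\mathrm{ord}(x)\ge l-d_1$ (automatic once $\bar x\in\pi^{-l}\mathfrak{o}/\mathfrak{o}$ is phrased correctly), and $\mathrm{ord}(\pi^{d_1-2l}v_1x^2+\pi^{d_2-2(b-l)}v_2)\ge 0$. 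The heart of the matter is to analyze, for each fixed $l$ with $l\le d_1/2$ and $l\ge b-d_2/2$, how many residues $\bar x\bmod \pi^{-l}\mathfrak{o}$ satisfy this last condition and then to read off the rank of $\overline A$, hence the value of $a$, from where $\mathrm{ord}$ of each entry of $A$ sits relative to $0$. I would organize the whole proof as a bookkeeping over $l$, splitting into the ranges $0\le b<d_1/2$, $d_1/2\le b<d_2/2$, $d_2/2\le b<(d_1+d_2)/2$, and $(d_1+d_2)/2\le b$, exactly matching the case divisions in the statement.

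First I would treat $a=n$ (i.e. $\mathrm{rank}(\overline A)=2$, so $\overline A$ nondegenerate). Nondegeneracy of $\overline A$ forces both diagonal entries to have order exactly $0$ when combined with the rank-$2$ condition; chasing orders shows this can only happen when $d_1-2l=0$ and $d_2-2(b-l)=0$, i.e. $l=d_1/2$ and $b=(d_1+d_2)/2$, which already needs $d_1,d_2$ both even. In that case $\overline A=\mathrm{diag}(\bar v_1,\bar v_2)$ is automatically rank $2$, but we must also check $\mathrm{GK}(gL)\succeq 0$, which here is the unimodularity of $gL$ and is automatic. This gives $\#\mathcal{S}_{(L,n^\pm,b)}=\delta(d_1)\delta(d_2)$ when $b=(d_1+d_2)/2$ and $d_1+d_2$ is "split-compatible" in the sense $\delta(d_1+d_2)\ne(-v_1v_2,\pi)$ — wait, I would have to be careful: when $d_1+d_2$ is even there is a second family. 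The subtlety is that when $l<d_1/2$ but $2l=d_1$ fails, one can still have $\overline A$ of rank $2$ via the off-diagonal entry if the diagonal entries vanish, which forces $d_1-2l>0$ and $d_2-2(b-l)>0$ and $d_1-2l=2(l-b)+d_2$... this is precisely the degenerate-looking but rank-$2$ case that produces the $d_1+1$ count. I would isolate this by noting $\overline A=\pi^{d_1-2l}v_1\begin{pmatrix}1&\bar x\\\bar x&\bar x^2\end{pmatrix}+\pi^{d_2-2(b-l)}v_2 E_{22}$, which has rank $2$ iff the two "levels" $d_1-2l$ and $d_2-2(b-l)$ coincide and the resulting $2\times2$ matrix over $\kappa$ is nondegenerate, i.e. $\bar v_1\bar x^2+\bar v_2\ne 0$ is false — actually nondegeneracy of $\begin{pmatrix}\bar v_1&\bar v_1\bar x\\\bar v_1\bar x&\bar v_1\bar x^2+\bar v_2\end{pmatrix}$ is $\bar v_1\bar v_2\ne0$, always true. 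So when $d_1-2l=d_2-2(b-l)=:j\ge 0$, i.e. $2b=d_1+d_2-2j+2\cdot(\text{stuff})$... I would carefully solve: $d_1-2l=d_2-2(b-l)\iff 2b=d_2-d_1+4l$, combined with $d_1-2l\ge0$, giving the count of admissible $l$, then multiply by the number of $\bar x$ with the order condition. This produces the $d_1+1$ and $(2b-d_2+1)$ type expressions.

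Next I would do $a=n-2$ (so $\overline A=0$, i.e. $A\in\pi\,\mathrm{M}_2(\mfo)$) and $a=n-1$ (so $\mathrm{rank}(\overline A)=1$) in parallel: for $a=n-2$ we need $d_1-2l\ge1$ and $d_2-2(b-l)\ge1$ plus the order of the $(2,2)$-entry $\ge1$; for $a=n-1$ we need exactly one of the two diagonal levels to be $0$, or the off-diagonal to carry the rank. In each subcase I count the number of valid $l$ and, for each, the number of residues $\bar x$ satisfying the quadratic order condition $\mathrm{ord}(v_1x^2+\pi^{d_2-d_1-2(b-2l)}v_2)\ge(\text{target})$; when the two exponents are unequal this is a straightforward count, and when they are equal it becomes a question of how often $\bar v_1\bar x^2+\bar v_2$ vanishes, which is where the Hilbert-symbol term $(-v_1v_2,\pi)$ (equivalently whether $-v_1/v_2\in(\kappa^\times)^2$) enters — it governs whether the conic $\bar v_1 t^2+\bar v_2=0$ has $0$ or $2$ solutions in $\kappa$, and lifting to residues mod $\pi^{-l}$ by Hensel gives the power-of-$f$ multiplicities. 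Finally, parts (2) and (3) of the Proposition follow immediately from part (1) together with Lemma~\ref{lem:rep_g_S_ab}.(3): the quantity $\chi(a^\pm)$ is a global sign $((-1)^{a/2}u_1\cdots u_{n-2}\cdot(\text{extra units}),\pi)$ that does not depend on $g\in\mathcal{S}_{(L,a^\pm,b)}$, so one just factors it out of the count, reading the "extra units" off the discriminant computations in the proof of Lemma~\ref{lem:rep_g_S_ab}.(3) (namely $a_0$ for $a=n-1$ becomes $v_1$ or $v_2$ according to which diagonal level vanished). The main obstacle, I expect, is the bookkeeping in the overlapping range $d_2/2\le b<(d_1+d_2)/2$ when $d_1+d_2$ is even: there the "equal-levels" locus $2b=d_2-d_1+4l$ coexists with "unequal-levels" contributions, the rank of $\overline A$ can jump, and one must carefully partition the $l$'s and the $\bar x$'s so that each pair is counted in exactly one of $\mathcal{S}_{(L,(n-2)^\pm,b)}$, $\mathcal{S}_{(L,(n-1)^\pm,b)}$, $\mathcal{S}_{(L,n^\pm,b)}$ — a consistency check being that the three counts sum to the total $\sum_{l}f^{l}$ over admissible $l$, which I would verify at the end as a sanity check (and which, via Proposition~\ref{flx}, must be compatible with $F_L$ having the right degree).
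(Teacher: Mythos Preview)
Your overall strategy---using Lemma~\ref{lem:rep_g_S_ab} to reduce to counting pairs $(l, x \bmod \mathfrak{o})$ and then doing a case analysis---matches the paper's. Two small corrections to your sketch: the a priori number of residues of $x$ is $f^{\min(b-l,\,d_1-2l)}$, not $f^{l}$ (the bound comes from both $\pi^{b-l}x \in \mathfrak{o}$ and the off-diagonal entry $\pi^{d_1-2l}v_1x$ of $A$); and your treatment of $a=n$ is tangled---the clean observation is that $\det A = \pi^{d_1+d_2-2b}v_1v_2$, so $\mathrm{rank}(\overline A)=2$ forces $b=(d_1+d_2)/2$ outright, after which one simply counts all $(l,x)$ with $A \in \mathrm{M}_2(\mathfrak{o})$. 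The paper organizes the whole count by the sign of $d_2-2(b-l)$ rather than by the range of $b$; this is slightly more natural since that sign controls whether $\pi^{d_2-2(b-l)}v_2$ lies in $\pi\mathfrak{o}$, in $\mathfrak{o}^\times$, or outside $\mathfrak{o}$, which is precisely what governs the $(2,2)$-entry condition. Either organization works for part~(1), and your plan for part~(2) is correct.

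The genuine gap is in part~(3). You assert that $\chi((n-1)^\pm)$ is a global sign independent of $g$, with $a_0$ equal to $v_1$ or $v_2$ ``according to which diagonal level vanished.'' This is false. When $d_1-2l>0$ (so the $(1,1)$-entry of $\overline A$ vanishes), $a_0$ is the $(2,2)$-entry, whose reduction modulo $\pi$ depends on the leading digits of $x$, and its square class in $\kappa^\times$ genuinely varies with $x$. Concretely: when $d_2-2(b-l)<0$ (which forces $-v_1v_2$ to be a square), the paper computes $\bar a_0 = 2\bar v_1\,\bar s\,\bar w^2$ with $\bar s \in \kappa^\times$ a free parameter, so exactly half of the lattices land in $\mathcal{S}_{(L,(n-1)^+,b)}$ and half in $\mathcal{S}_{(L,(n-1)^-,b)}$, giving a net contribution of $0$ to $\chi((n-1)^\pm)\#\mathcal{S}_{(L,(n-1)^\pm,b)}$; and when $d_2-2(b-l)=0$ one has $\bar a_0 = \bar v_1 \bar s^2 + \bar v_2$, and determining how often this is a square in $\kappa^\times$ requires a separate character-sum count (Lemma~\ref{lem:norm} in the paper). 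These cancellations are exactly why the last line of part~(3) reads $0$ rather than $\pm\,\#\mathcal{S}_{(L,(n-1)^\pm,b)}$; you cannot obtain part~(3) from part~(1) by merely factoring out a sign.
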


\begin{remark}{\textbf{Strategy of the proof.}}\label{rmk:4.3}
By  Lemma \ref{lem:rep_g_S_ab}.(1)-(2), the computation of $\# \mathcal{S}_{(L,a^\pm,b)}$
    is equivalent to count the number of integers $0 \leq l \leq b$ and $x \in F$ up to addition by elements of  $\mathfrak{o}$, satisfying certain conditions listed in Lemma \ref{lem:rep_g_S_ab}.(1) involving the matrix 
    \begin{equation}\label{eq:form_A}
        A:=
  \begin{pmatrix}
          \pi^{d_1-2l} v_1 &\pi^{d_1-2l} v_1 x \\
        \pi^{d_1-2l} v_1 x &\pi^{d_1-2l} v_1 x^2 + \pi^{d_2-2(b-l)} v_2
      \end{pmatrix}.
    \end{equation}

    Let $S$ be a set of representatives of $\kappa$ in $\mathfrak{o}$ such that 
    $0, \pm 1\in S$. 
For a fixed unit $w \in \mathfrak{o}^\times$,  $x \in F$ up to addition by elements of  $\mathfrak{o}$ is uniquely expressed as follows:
    \begin{equation}\label{eq:form_x}
    x = w\left(\sum_{i = 1}^\infty s_i \pi^{-i}\right) \textit{   with   } s_i \in S   \textit{ such that all but finitely many $s_i$'s are $0$.}
    \end{equation}
  
    Later for each case, we will choose a suitable $w$ to make the computation easier.
    We denote by $\overline{s}_i \in \kappa$ the reduction of $s_i\in \mfo$ modulo $\pi$.
Then our steps go as follows:

    \begin{enumerate}[label=Step \arabic*:]
        \item The computation of $\# \mathcal{S}_{(L,a^\pm,b)}$:
    
    The condition that $0 \leq l \leq b$, $\pi^{b-l} x \in \mathfrak{o}$, and $A \in \mathrm{M}_2(\mathfrak{o})$ in Lemma \ref{lem:rep_g_S_ab}.(1) is equivalent to the following conditions involving $l$ and $s_i$'s:
    \begin{equation}\label{eq:Sab}
    \begin{cases}
      (1): 0 \leq l \leq \min(d_1/2,b);  \\
      (2): s_i = 0 \textit{ for } i > \min(b-l,d_1-2l); \\
      (3): \pi^{d_1-2l} v_1w^2 \left(\sum\limits_{i = 1}^\infty s_i \pi^{-i}\right)^2 + \pi^{d_2-2(b-l)} v_2  \in \mathfrak{o}.
    \end{cases}
  \end{equation}
  Here the condition for the rank of $\overline{A}$ in Lemma \ref{lem:rep_g_S_ab}.(1) is not imposed yet, and thus the number of $l$ and $s_i$'s satisfying Equation (\ref{eq:Sab}) equals $\sum\limits_{a = n-2}^n\# \mathcal{S}_{(L,a^\pm,b)}$.

 To treat  the rank of $\overline{A}$, we observe that 
 $|\det A| = |\det{{}^t X B X}|= f^{2b - d_1 - d_2}$ (cf. the proof of Lemma \ref{lem:rep_g_S_ab}.(1)). Then   the conditions that $A \in \mathrm{M}_2(\mathfrak{o})$  and $rank (\overline{A}) = a-(n-2)$ in Lemma \ref{lem:rep_g_S_ab}.(1) yield that
    \begin{equation}\label{equation:bd12}
            \begin{cases}
       \mathcal{S}_{(L,a^{\pm},b)} = \emptyset \textit{ for any } a  &\textit{if } b > (d_1+d_2)/2; \\
       \mathcal{S}_{(L,n^\pm,b)} = \emptyset &\textit{if }b \neq (d_1+d_2)/2; \\
       \mathcal{S}_{(L,a^\pm,(d_1+d_2)/2)} = \emptyset &\textit{if } a \neq n.
    \end{cases}
        \end{equation}

This directly implies that the number of $l$ and $s_i$'s satisfying Equation (\ref{eq:Sab}) $=$
    \[
  \sum\limits_{a = n-2}^n\# \mathcal{S}_{(L,a^\pm,b)}=
    \begin{cases}
        \# \mathcal{S}_{(L,(n-2)^\pm,b)} + \# \mathcal{S}_{(L,(n-1)^\pm,b)} &\textit{if } b<\frac{d_1+d_2}{2}; \\
        \# \mathcal{S}_{(L,n^\pm,b)} &\textit{if } b = \frac{d_1+d_2}{2}.
    \end{cases}
    \]

    The remaining is to compute $\#\mathcal{S}_{(L,(n-2)^\pm,b)}$
    when $b<(d_1+d_2)/2$. 
By using the description of $A$ given in Equation (\ref{eq:form_A}),  the condition that $ \overline{A} = 0$ under the restriction of Equation (\ref{eq:Sab}) is equivalent to   
   the following three conditions:
  \begin{equation}\label{eq:Sab2}
  (1') : l<d_1/2; \quad
  (2') : s_{d_1-2l} = 0; \quad
  (3') : \pi^{d_1-2l} v_1w^2 \left(\sum_{i = 1}^\infty s_i \pi^{-i}\right)^2 + \pi^{d_2-2(b-l)} v_2  \in \pi\mathfrak{o}.
  \end{equation}
  Thus  $\# \mathcal{S}_{(L,(n-2)^\pm,b)}=$  the number of $l$ and $s_i$'s satisfying Equations (\ref{eq:Sab})-(\ref{eq:Sab2}).
\\

    \item The computation of $\chi(a^\pm)\# \mathcal{S}_{(L,a^\pm,b)}$ for $a (<n)$ even:

    If $n$ is even, then
Lemma \ref{lem:rep_g_S_ab}.(3) yields that  one of $\mathcal{S}_{(L,(n-2)^+,b)}$ or $\mathcal{S}_{(L,(n-2)^-,b)}$ is empty, and that $\#\mathcal{S}_{(L,(n-2)^\pm,b)} = \#\mathcal{S}_{(L,(n-2)^+,b)}$ if and only if $((-1)^\frac{n-2}{2}u_1 \cdots u_{n-2},\pi) = 1$. 
    Here we refer to Section \ref{sec:not} for the Hibert symbol $(-,-)$ and its property.
    Thus we have that
   \[ \chi((n-2)^\pm) \#\mathcal{S}_{(L,(n-2)^\pm,b)} = ((-1)^{\frac{n-2}{2}}{u}_1 \cdots {u}_{n-2},\pi) \cdot \#\mathcal{S}_{(L,(n-2)^\pm,b)}.
  \]
  This, together with Step 1, proves Proposition \ref{prop:Sab}.(2).

   If $n$ is odd, then we have to consider $a_0$ which is defined  in Lemma \ref{lem:rep_g_S_ab}.(3).
  This will be treated case by case in the proof.
For the further use, we state the following formula:
    \begin{equation}\label{eq:sign_n-1}
    \chi(n-1)^\pm)\#\mathcal{S}_{(L,(n-1)^{\pm},b)} 
    = 2 \#\mathcal{S}_{(L,(n-1)^{+},b)} - \#\mathcal{S}_{(L,(n-1)^{\pm},b)}.
    \end{equation}
    
\end{enumerate}    
\end{remark}

\textit{\textbf{Proof of Proposition \ref{prop:Sab}.}}  
We separate the cases according to the sign of $d_2-2(b-l)$, which appears in the term $\pi^{d_2-2(b-l)}v_2$ of Equations (\ref{eq:Sab}).(3) and  (\ref{eq:Sab2}).($3'$). 
This sign gives a bound for $l$.
Thus by applying  steps of Remark \ref{rmk:4.3} to each case, we obtain  partitions of $\# \mathcal{S}_{(L,a^\pm,b)}$,  $\# \mathcal{S}_{(L,a^+,b)}$,  and $\chi(a)\# \mathcal{S}_{(L,a^\pm,b)}$.
  An individual summand of the partition of $\#\mathcal{S}_{(L,a^\pm,b)}$ according to the sign
  is called a part of $\#\mathcal{S}_{(L,a^\pm,b)}$, and the same for $\# \mathcal{S}_{(L,a^+,b)}$ and for $\chi(a)\# \mathcal{S}_{(L,a^\pm,b)}$.

  \begin{enumerate} 
  \item The case that $d_2-2(b-l)>0$, equivalently $l>b-d_2/2$:
    
    By Equation (\ref{eq:Sab}).(1), we may and do assume that $b<(d_1+d_2)/2$.
    We put $w = 1$ in Equation (\ref{eq:form_x}).
    \begin{enumerate}[label=Step \arabic*:]
    \item 
Since $\pi^{d_2-2(b-l)}v_2 \in \pi\mathfrak{o}$,   Equations (\ref{eq:Sab}) and (\ref{eq:Sab2}) are equivalent to the followings:
  \begin{equation}\label{eq:Sab3}
    \begin{cases}
      (1): \max(0,b-(d_2-1)/2) \leq l \leq \min(d_1/2,b);  \\
      (2): s_i = 0 \textit{ for } i> \min(d_1/2,b)-l,
    \end{cases} \quad\begin{cases}
      (1'): l<d_1/2;  \\
      (2'): s_{d_1/2-l} = 0 .
    \end{cases}
  \end{equation}
Note that Equation (\ref{eq:Sab}) is equivalent to Equation (\ref{eq:Sab3}).(1)-(2).  
    By Step 1 of Remark \ref{rmk:4.3}, the part of $\# \mathcal{S}_{(L,(n-2)^\pm,b)}$ is the same as the number of $l$ and $s_i$'s satisfying Equation (\ref{eq:Sab3}), and the part of $\# \mathcal{S}_{(L,(n-1)^\pm,b)}$ is the same as the number of $l$ and $s_i$'s satisfying Equation (\ref{eq:Sab3}).(1)-(2) but not satisfying either Equation (\ref{eq:Sab3}).($1'$) or ($2'$).
    This discussion yields the followings:
      \begin{equation}\label{eq:form_Sab_b<d2/2}
    \begin{aligned}
    \textit{the part of }\# \mathcal{S}_{(L,(n-2)^\pm,b)} 
    &=
    \begin{cases}
        \sum_{i = 0}^b f^i
        &\textit{if } 0 \leq b <d_1/2; \\
        \sum_{i = 0}^{\lfloor (d_1-1)/2 \rfloor} f^i
        &\textit{if } d_1/2 \leq b < d_2/2; \\
        \sum_{i = 0}^{\lfloor (d_1-1)/2 \rfloor + \lfloor (d_2-1)/2 \rfloor} f^i
        &\textit{if } d_2/2 \leq b <(d_1+d_2)/2.
    \end{cases}\\
    \textit{the part of } \# \mathcal{S}_{(L,(n-1)^\pm,b)} &=
    \begin{cases}
        0
        &\textit{if } 0 \leq b <d_1/2; \\
        \delta(d_1) f^{d_1/2}
        &\textit{if } d_1/2 \leq b < d_2/2; \\
        \delta(d_1) f^{d_1/2 + \lfloor (d_2-1)/2 \rfloor - b}
        &\textit{if } d_2/2 \leq b <(d_1+d_2)/2.
    \end{cases}
    \end{aligned}
  \end{equation}

    \item Suppose that $n$ is odd and $a = n-1$.
    If $l$ satisfies Equation (\ref{eq:Sab3}).(1), then $l\leq d_1/2$. 
If $l = d_1/2$, then the (1,1)-entry of $A$ ($=a_0$) is a unit and its reduction modulo $\pi$ is $\overline{v}_1$.
  Otherwise (equivalently $l < d_1/2$), the (2,2)-entry of $A$ ($=a_0$) is a unit and 
 its reduction modulo $\pi$ is $\overline{v}_1\overline{s}_{d_1/2-l}^2$ with $\overline{s}_{d_1/2-l}\neq 0$.
  In both cases, $d_1$ must be even.
  By Lemma \ref{lem:rep_g_S_ab}.(3), one of the part of $\mathcal{S}_{(L,(n-1)^+,b)}$ or the part of  $\mathcal{S}_{(L,(n-1)^-,b)}$ is empty. 
  Then for $b<(d_1+d_2)/2$, the part of $\chi((n-1)^\pm) \#\mathcal{S}_{(L,(n-1)^\pm,b)}$ is 
 \begin{equation}\label{eq:sign_n-1_1}  
      \begin{aligned}
                 \delta(d_1)((-1)^{\frac{n-1}{2}}u_1 \cdots u_{n-2}v_1,\pi) \cdot \textit{the part of }\# \mathcal{S}_{(L,(n-1)^\pm,b)}, \\
       \textit{    where $n$ is odd and $\textit{the part of }\# \mathcal{S}_{(L,(n-1)^\pm,b)}$ follows Equation (\ref{eq:form_Sab_b<d2/2}).}
          \end{aligned}
\end{equation} 
    \end{enumerate}

    If $b<d_2/2$, then the given assumption in this case, which is $l>b-d_2/2$,  is unnecessary since $l$ should be non-negative. Thus  $\# \mathcal{S}_{(L,a^\pm,b)} = \textit{the part of }\# \mathcal{S}_{(L,a^\pm,b)}$ and $\chi((n-1)^\pm) \#\mathcal{S}_{(L,(n-1)^\pm,b)} = \textit{the part of }\chi((n-1)^\pm) \#\mathcal{S}_{(L,(n-1)^\pm,b)}$.
  This proves Proposition \ref{prop:Sab}.(1) and (3) for $b<d_2/2$. 
\\

   \item The case that $d_2-2(b-l)<0$, equivalently $l<b-d_2/2$:
   
Note that $\pi^{d_2-2(b-l)}v_2 \not\in \mathfrak{o}$.
    Since $l$ is non-negative by Equation (\ref{eq:Sab}).(1) and  Equation (\ref{eq:Sab}).(3) requires to have a solution, we may and do assume that $b >d_2/2$, that $d_1+d_2$ is even, and that $-\overline{v}_1 \overline{v}_2$ is a square in $\kappa^{\times}$. 
    Then we choose $w \in \mathfrak{o}^\times$ in Equation (\ref{eq:form_x}) such that $v_1w^2 + v_2 = 0$.
    
    On the other hand, we may and do assume that $b\leq (d_1+d_2)/2$ by Equation (\ref{equation:bd12}). 
    \begin{enumerate}[label=Step \arabic*:]
    \item

 Equations (\ref{eq:Sab}) and  (\ref{eq:Sab2}) are equivalent to the followings:
  \begin{equation}\label{eq:Sab4}
  \begin{cases}
      (1): 0 \leq l < b-d_2/2; \\
      (2): s_{(b-2l)-(d_2-d_1)/2} = \pm 1; \\
      (3) : s_i = 0 &\textit{for } i>(d_1+d_2)/2-b \textit{ and } i\neq(b-2l)-(d_2-d_1)/2; \\
      (4): s_{(d_1+d_2)/2-b} = 0 &\textit{if } b<(d_1+d_2)/2.
  \end{cases}
  \end{equation}
  Here  $(b-2l)-(d_2-d_1)/2>(d_1+d_2)/2-b \geq 0$.
  Note that Equation (\ref{eq:Sab}) is equivalent to Equation (\ref{eq:Sab4}).(1)-(3).
\begin{enumerate}
    \item 
    Suppose that  $b = (d_1+d_2)/2$.
    By Step 1 of Remark \ref{rmk:4.3}, the part of $\# \mathcal{S}_{(L,n^\pm,(d_1+d_2)/2)}$ is the same as the number of $l$ and $s_i$'s  satisfying Equation (\ref{eq:Sab4}).(1)-(3).
    This yields the following:
    \begin{equation}\label{eq:Sab_b>=d2/2_3'}
    \begin{aligned}
        &\textit{the part of } \# \mathcal{S}_{(L,n^\pm,(d_1+d_2)/2)}
        =\begin{cases}
            d_1 &\textit{if } d_1, d_2 \textit{ are even}; \\
            (d_1+1) &\textit{if } d_1, d_2 \textit{ are odd},
        \end{cases} \\
        &\textit{ where }  d_1 + d_2 \textit{ is even and } -v_1v_2 \textit{ is a square}.
    \end{aligned}
    \end{equation}
    
\item 
Suppose that  $d_2/2<b<(d_1+d_2)/2$.
By Step 1 of Remark \ref{rmk:4.3}, 
 the part of $\# \mathcal{S}_{(L,(n-2)^\pm,b)}$ is the same as the number of $l$ and $s_i$'s satisfying Equation (\ref{eq:Sab4}), and the part of $\# \mathcal{S}_{(L,(n-1)^\pm,b)}$ is the same as the number of $l$ and $s_i$'s satisfying Equation (\ref{eq:Sab4}).(1)-(3) but not satisfying Equation (\ref{eq:Sab4}).(4).
This discussion yields the followings:
  \begin{equation}\label{eq:Sab_b>=d2/2_3}
  \begin{aligned}
    \begin{cases}
      \textit{the part of } \# \mathcal{S}_{(L,(n-2)^\pm,b)} = (2b-2\lceil d_2/2 \rceil) f^{(d_1+d_2)/2-b-1}; \\
   \textit{the part of } \# \mathcal{S}_{(L,(n-1)^\pm,b)}  = (2b-2\lceil d_2/2 \rceil)(f-1) f^{(d_1+d_2)/2-b-1},
    \end{cases} \\
    \textit{where }  d_1+d_2 \textit{ is even, } -v_1v_2 \textit{ is a square, and } d_2/2 < b < (d_1+d_2)/2. 
    \end{aligned}
  \end{equation}
  \end{enumerate}

    \item
The case that $n$ is even and $a=n-2$ is treated in Step 2 of Remark \ref{rmk:4.3}.
Thus we work with the other case  when  $n$ is odd and $a = n-1$.
  Since the assumption that $l<b-d_2/2$ yields that $d_1>2l$, the (1,1)-entry of $A$ is not a unit.
  Then the (2,2)-entry of $A$ ($=a_0$) is a unit, and its reduction  modulo $\pi$ is $2 \overline{v}_1 \overline{s}_{(d_1+d_2)/2-b} \overline{w}^2 \neq 0$.
  Since $p$ is odd, exactly the half of the elements $\overline{s}_{(d_1+d_2)/2-b} \in \kappa^\times$ are squares.
  Thus Lemma \ref{lem:rep_g_S_ab}.(3) yields that
  \begin{equation}\label{eq:sign_n-1_3}
  \begin{aligned}
    &\textit{the part of } \chi((n-1)^\pm) \#\mathcal{S}_{(L,(n-1)^\pm,b)} = 0, \\
    &\textit{where $n$ is odd and }  d_2/2 < b < (d_1+d_2)/2.
    \end{aligned}
  \end{equation}
  \end{enumerate}
\textit{ }  
  
  \item The case that $d_2-2(b-l)=0$, equivalently $l = b-d_2/2$:
    
    This case occurs only when $d_2$ is even. For the further use of notation, we will multiply all formulas by $\delta(d_2)$.
    Here we refer to Section \ref{sec:not} for the notion of $\delta(-)$.
    
    Note that $\pi^{d_2-2(b-l)}v_2  \in \mathfrak{o}^\times$.
    By Equations (\ref{eq:Sab}).(1) and (\ref{equation:bd12}), we may and do assume that $d_2/2 \leq b \leq (d_1+d_2)/2$.
    We put $w = 1$ in Equation (\ref{eq:form_x}).

    \begin{enumerate}[label=Step \arabic*:]
    \item
    Equations (\ref{eq:Sab}) and   (\ref{eq:Sab2}) are equivalent to the followings:
    \begin{equation}\label{eq:Sab_b-d_2/2=l}
        \begin{cases}
            (1) : d_2/2 \leq b \leq (d_1+d_2)/2; \\
            (2) : s_i = 0 \textit{ for } i>(d_1+d_2)/2-b.
        \end{cases}, \quad
        \begin{cases}
            (1') : b \neq (d_1+d_2)/2; \\
            (2') : \overline{v}_1 \overline{s}_{(d_1+d_2)/2-b}^2 + \overline{v}_2 = 0.
        \end{cases}
    \end{equation}
Here $(2')$ is indeterminant if $d_1+d_2$ is odd or $-v_1v_2$ is not a square. This will be treated carefully below in Equation (\ref{eq:Sab_b-d2/2=l_total}).
Note that Equation (\ref{eq:Sab}) is equivalent to Equation (\ref{eq:Sab_b-d_2/2=l}).(1)-(2).

\begin{enumerate}
    \item 
    If $b = (d_1+d_2)/2$, then the part of $\# \mathcal{S}_{(L,n^\pm,(d_1+d_2)/2)}$ is the same as the number of $s_i$'s satisfying Equation (\ref{eq:Sab_b-d_2/2=l}).(1)-(2), which is 1.
    This case appears only when $d_1 + d_2$ is even, and thus we have that
    \begin{equation}\label{eq:Sab_b-d_2/2=l_2}
      \textit{the part of }\# \mathcal{S}_{(L,n^\pm,(d_1+d_2)/2)} = \delta(d_1+d_2)\delta(d_2) = \delta(d_1)\delta(d_2).
    \end{equation}

    \item If $d_2/2 \leq b<(d_1+d_2)/2$, then Equation (\ref{eq:Sab_b-d_2/2=l}).(1$'$) is redundant.
     By Step 1 of Remark \ref{rmk:4.3}, the part of $\# S_{(L,(n-2)^\pm,b)}$ is the same as the number of  $s_i$'s satisfying Equation (\ref{eq:Sab_b-d_2/2=l}), and the part of $\# S_{(L,(n-1)^\pm,b)}$ is the same as the number of  $s_i$'s satisfying Equation (\ref{eq:Sab_b-d_2/2=l}).(1)-(2) but not satisfying Equation (\ref{eq:Sab_b-d_2/2=l}).($2'$).
    Here Equation (\ref{eq:Sab_b-d_2/2=l}).(2$'$) has a solution if and only if $d_1+d_2$ is even and $-v_1v_2$ is a square.
This discussion yields the followings:
    {\small
    \begin{equation}\label{eq:Sab_b-d2/2=l_total}
    \begin{aligned}
      &\textit{the part of }\# \mathcal{S}_{(L,(n-2)^\pm,b)}=
      \begin{cases}
          0
          &\textit{if } d_1+d_2 \textit{ is odd or } -v_1 v_2 \notin (\mathfrak{o}^\times)^2; \\
          \delta(d_2) 2 f^{(d_1+d_2)/2-b-1}
          &\textit{if } d_1+d_2 \textit{ is even and } -v_1 v_2 \in (\mathfrak{o}^\times)^2,
      \end{cases}
       \\
      &\textit{the part of } \# \mathcal{S}_{(L,(n-1)^\pm,b)} =
      \begin{cases}
          \delta(d_2)f^{\lfloor (d_1+d_2)/2 \rfloor-b}
          &\textit{if } d_1+d_2 \textit{ is odd or } -v_1 v_2 \notin (\mathfrak{o}^\times)^2; \\
          \delta(d_2) (f-2) f^{(d_1+d_2)/2-b-1}
          &\textit{if } d_1+d_2 \textit{ is even and } - v_1 v_2 \in (\mathfrak{o}^\times)^2,
      \end{cases} \\
      &\textit{where } d_2/2 \leq b <(d_1+d_2)/2.
    \end{aligned}
    \end{equation}
    }
\end{enumerate}    

    \item Suppose that $n$ is odd and $a = n-1$.
    Since  $b<(d_1+d_2)/2$ by Equation (\ref{equation:bd12}),
 the (1,1)-entry of $A$ is not a unit.
    Then the (2,2)-entry of $A$ ($= a_0$) is a unit, and its reduction modulo $\pi$ is   $\overline{v}_1 \overline{s}_{(d_1+d_2)/2-b}^2 + \overline{v}_2 \neq 0$.
    Here we understand $s_{(d_1+d_2)/2-b}=0$ if $d_1+d_2$ is odd. 
     By Lemma \ref{lem:rep_g_S_ab}.(3), the part of $\# \mathcal{S}_{(L,(n-1)^+,b)}$ is the number of $s_i$'s satisfying Equation (\ref{eq:Sab_b-d_2/2=l}).(1)-(2)  such that
     \[ \tag{$\ast$} (-1)^{\frac{n-1}{2}} \overline{u}_1 \cdots \overline{u}_{n-2} (\overline{v}_1 \overline{s}_{(d_1+d_2)/2-b}^2 + \overline{v}_2)
  \]
     is contained in $(\kappa^\times)^2$.
 Since the above $(\ast)$ only concerns $\overline{s}_{(d_1+d_2)/2-b}$, not involving  $s_i$'s with $1\leq i <(d_1+d_2)/2-b$, 
 we have that 
 \begin{equation}\label{eq:Sab_b-d_2/2=l_(n-1)+}
    \begin{aligned}
     \textit{the part of } \# \mathcal{S}_{(L,(n-1)^+,b)} 
     =\delta(d_2) \cdot \# \{\overline{s}_{(d_1+d_2)/2-b} \in \kappa \mid (\ast) \in (\kappa^\times)^2\}  \cdot f^{\lfloor (d_1+d_2-1)/2 \rfloor -b}.
     \end{aligned}
    \end{equation}

To simplify notation,   we let 
    \[
    \mathcal{U}:=(-1)^{\frac{n-1}{2}} \overline{u}_1 \cdots \overline{u}_{n-2}\overline{v}_1, \mathcal{V}:=(-1)^{\frac{n-1}{2}} \overline{u}_1 \cdots \overline{u}_{n-2} \overline{v}_2 \in \kappa^{\times}.     
    \]

If $d_1+d_2$ is odd so that $s_{(d_1+d_2)/2-b}=0$, then we have that 
  \begin{equation}\label{eq:Sab_b-d2/2=l_notsquare_+1}
      \textit{the part of }\# \mathcal{S}_{(L,(n-1)^+,b)}=
      \begin{cases}
        0
        &\textit{if } \mathcal{V} \notin (\kappa^\times)^2; \\
        \delta(d_2)\cdot f^{(d_1+d_2-1)/2-b}
        &\textit{if } \mathcal{V} \in (\kappa^\times)^2.
         \end{cases}
    \end{equation}

If $d_1+d_2$ is even,
     then Lemma \ref{lem:norm}, which will be stated below, yields the following:
  \begin{equation}\label{eq:Sab_b-d2/2=l_notsquare_+}
      \textit{the part of }\# \mathcal{S}_{(L,(n-1)^+,b)}=
      \begin{cases}
        \delta(d_2)\frac{(f-1)}{2}\cdot f^{(d_1+d_2)/2-b-1}
        &\textit{if } -\mathcal{V} \notin (\kappa^\times)^2\\
        \delta(d_2)\frac{(f-3)}{2}\cdot f^{(d_1+d_2)/2-b-1}
        &\textit{if } \mathcal{U}, -\mathcal{U}\mathcal{V} \in (\kappa^\times)^2 \\
        \delta(d_2)\frac{(f+1)}{2}\cdot f^{(d_1+d_2)/2-b-1}
        &\textit{if } \mathcal{U}, -\mathcal{U}\mathcal{V} \notin (\kappa^\times)^2.
      \end{cases}
    \end{equation}
Here  $-\mathcal{V} \notin (\kappa^\times)^2$ if and only if exactly one of $\mathcal{U}, -\mathcal{U}\mathcal{V}$ is in $(\kappa^\times)^2$.

By plugging in Equations (\ref{eq:Sab_b-d2/2=l_total}), (\ref{eq:Sab_b-d2/2=l_notsquare_+1}), and (\ref{eq:Sab_b-d2/2=l_notsquare_+})   into  Equation (\ref{eq:sign_n-1}), we have  that 
z
    \begin{equation}\label{eq:Sab_b-d2/2=l_total_pm}
    \begin{aligned}
      &\textit{the part of }  \chi((n-1)^\pm) \#\mathcal{S}_{(L,(n-1)^\pm,b)} \\
     &=
     \begin{cases}
        \delta(d_2) ((-1)^{\frac{n-1}{2}}u_1 \cdots u_{n-2}v_2,\pi) f^{(d_1+d_2-1)/2-b}
        &\textit{if } d_1+d_2 \textit{ is odd}; \\
        -\delta(d_2)((-1)^{\frac{n-1}{2}}u_1 \cdots u_{n-2}v_1,\pi) f^{(d_1+d_2)/2-b-1}
        &\textit{if } d_1+d_2 \textit{ is even},
     \end{cases} \\
     &\textit{where $n$ is odd and } d_2 \leq b <(d_1+d_2)/2.
     \end{aligned}
    \end{equation}

    Here we use the fact  that $-v_1v_2 \in (\mathfrak{o}^\times)^2$  if and only if $-\mathcal{U}\mathcal{V} \in (\kappa^\times)^2$.
    \end{enumerate}
\end{enumerate}
\textit{ }

  We finally combine all of the above formulas to calculate $\# \mathcal{S}_{(L,a^\pm,b)}$, $\chi((n-2)^\pm)\# \mathcal{S}_{(L,(n-2)^\pm,b)}$, and $\chi((n-1)^\pm)\# \mathcal{S}_{(L,(n-1)^\pm,b)}$.
  
  \begin{itemize}
    \item If $0 \leq b <d_2/2$, then 
    $\#\mathcal{S}_{(L,a^\pm,b)}$ and  $\chi((n-1)^\pm)\# \mathcal{S}_{(L,(n-1)^\pm,b)}$ are given by the formulas in Equations (\ref{eq:form_Sab_b<d2/2}) and (\ref{eq:sign_n-1_1}) respectively.
    This follows from the discussion in the paragraph following Equation (\ref{eq:sign_n-1_1}), and
    thus yields the desired formulas for the first two cases of $\# \mathcal{S}_{(L,(n-2)^\pm,b)}$, $\# \mathcal{S}_{(L,(n-1)^\pm,b)}$, and $\chi((n-1)^\pm)\# \mathcal{S}_{(L,(n-1)^\pm,b)}$ in Proposition \ref{prop:Sab}.(1) and (3).

    \item If $d_2/2 \leq b < (d_1+d_2)/2$, then $\#\mathcal{S}_{(L,a^\pm,b)}$ and $\chi((n-1)^\pm)\# \mathcal{S}_{(L,(n-1)^\pm,b)}$ are given by summing up their parts associated to $d_2-2(b-l)>0$, $d_2-2(b-l)<0$, and $d_2-2(b-l) = 0$ respectively.
    The parts of $\#\mathcal{S}_{(L,a^\pm,b)}$ are described in Equations (\ref{eq:form_Sab_b<d2/2}), (\ref{eq:Sab_b>=d2/2_3}), and (\ref{eq:Sab_b-d2/2=l_total}), and the parts of $\chi((n-1)^\pm)\# \mathcal{S}_{(L,(n-1)^\pm,b)}$ are described in Equations (\ref{eq:sign_n-1_1}), (\ref{eq:sign_n-1_3}), and (\ref{eq:Sab_b-d2/2=l_total_pm}).
Here   the case $d_2-2(b-l) < 0$ does not appear when $d_2/2=b$. 
    This yields the desired formulas for the third and fourth cases of $\# \mathcal{S}_{(L,(n-2)^\pm,b)} $ and $\# \mathcal{S}_{(L,(n-1)^\pm,b)} $, and the third, fourth, and fifth cases of $\chi((n-1)^\pm)\# \mathcal{S}_{(L,(n-1)^\pm,b)}$ in Proposition \ref{prop:Sab}.(1) and (3).

    \item If $(d_1+d_2)/2 \leq b$, then $\#\mathcal{S}_{(L,a^\pm, b)}=0$ for $a=n-2$ or $a=n-1$ and $\chi((n-1)^\pm)\# \mathcal{S}_{(L,(n-1)^\pm,b)} = 0$ by  Equation (\ref{equation:bd12}).
 This yields the desired formulas for the last case of $\# \mathcal{S}_{(L,(n-2)^\pm,b)}$, the first case of $\# \mathcal{S}_{(L,(n-1)^\pm,b)}$, and 
the first case of  $\chi((n-1)^\pm)\# \mathcal{S}_{(L,(n-1)^\pm,b)}$  in Proposition \ref{prop:Sab}.(1) and (3).

    \item Equation (\ref{equation:bd12})  yields the desired formula for the first case of $\# \mathcal{S}_{(L,n^\pm,b)}$ in Proposition \ref{prop:Sab}.(1).

\item  $\#\mathcal{S}_{(L,n^\pm,(d_1+d_2)/2)}$ is the sum of its parts associated to  $d_2-2(b-l) < 0$ and $d_2-2(b-l)=0$, which are described in Equation (\ref{eq:Sab_b>=d2/2_3'}) and (\ref{eq:Sab_b-d_2/2=l_2}) respectively.
Here   the case $d_2-2(b-l) > 0$ does not appear.
    This yields the desired formula for  the second and third cases of  $\#\mathcal{S}_{(L,n^\pm,(d_1+d_2)/2)}$ in Proposition \ref{prop:Sab}.(1).

    \item Proposition \ref{prop:Sab}.(2), the formula of $\chi((n-2)^\pm)\# \mathcal{S}_{(L,(n-2)^\pm,b)}$,   is proved in Step 2 of Remark \ref{rmk:4.3}.   
      \qed
  \end{itemize}
\begin{Lem}\label{lem:norm}
Let $\mathcal{U}$ and  $\mathcal{V}$ be units in $\kappa$. 
Let $P_s(t):=t^2-\mathcal{U}s^2$ in $\kappa[t]$, where $s\in \kappa$. 
Then 
\[
\#\{s \in \kappa \mid P_s(t)=\mathcal{V}  \textit{ has a solution in $\kappa^{\times}$}\}
=\begin{cases}
    (f-1)/2 &\textit{if } -\mathcal{V} \notin (\kappa^\times)^2; \\
    (f-3)/2 &\textit{if } \mathcal{U}, -\mathcal{U}\mathcal{V} \in (\kappa^\times)^2; \\
    (f+1)/2 &\textit{if } \mathcal{U}, -\mathcal{U}\mathcal{V} \notin (\kappa^\times)^2.
\end{cases}
\]
\end{Lem}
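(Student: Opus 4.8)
The plan is to count $s \in \kappa$ for which the equation $t^2 = \mathcal{U}s^2 + \mathcal{V}$ has a solution $t \in \kappa^\times$; equivalently, for which $\mathcal{U}s^2 + \mathcal{V}$ is a nonzero square in $\kappa$. I would split on the two cases $s = 0$ and $s \neq 0$. For $s = 0$ the condition reads $\mathcal{V} \in (\kappa^\times)^2$, which contributes $1$ or $0$. For $s \neq 0$, I would substitute $u = s^{-2}\mathcal{V} \cdot \mathcal{U}^{-1}$ — or more cleanly, divide the target equation $t^2 = \mathcal{U}s^2 + \mathcal{V}$ by $s^2$ and write $r = t/s$, reducing to: $r^2 - \mathcal{U} = \mathcal{V}s^{-2}$, i.e. $r^2 - \mathcal{U}$ must lie in $\mathcal{V}\cdot(\kappa^\times)^2$ (the nonzero squares scaled by $\mathcal{V}$). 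So the count of valid $s \neq 0$ equals (number of squares in $\kappa^\times$) times the number of $r \in \kappa$ with $r^2 - \mathcal{U} \in \mathcal{V}(\kappa^\times)^2$ — with a correction for multiplicity, since each nonzero value $\mathcal{V}s^{-2}$ is hit by exactly two values of $s$. Thus the $s \neq 0$ contribution is exactly $\#\{r \in \kappa : r^2 - \mathcal{U} \in \mathcal{V}(\kappa^\times)^2\}$ (the factor of two from $r \leftrightarrow -r$ cancels the factor $\tfrac12$ from $s \leftrightarrow -s$, with care near $r=0$).

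The core computation is then $N := \#\{r \in \kappa : r^2 - \mathcal{U} \in \mathcal{V}(\kappa^\times)^2\}$, which I would handle with the quadratic character $\chi$ of $\kappa^\times$ extended by $\chi(0) = 0$. Write $g(r) = r^2 - \mathcal{U}$. Using the standard identity $\#\{r : g(r) = c\}= 1 + \chi(c + \mathcal{U})$ for the fibers of $r \mapsto r^2$, summing over $c$ in the coset $\mathcal{V}(\kappa^\times)^2$ gives
\[
N = \sum_{c \in \mathcal{V}(\kappa^\times)^2}\bigl(1 + \chi(c+\mathcal{U})\bigr)
= \frac{f-1}{2} + \sum_{c \in \mathcal{V}(\kappa^\times)^2}\chi(c+\mathcal{U}).
\]
The remaining character sum $\Sigma := \sum_{c \in \mathcal{V}(\kappa^\times)^2}\chi(c+\mathcal{U})$ I would evaluate by writing the indicator of the coset $\mathcal{V}(\kappa^\times)^2$ as $\tfrac12\bigl(\mathbf{1}_{\kappa^\times}(c) + \chi(\mathcal{V})^{-1}\chi(c)\bigr) = \tfrac12\bigl(\mathbf{1}_{\kappa^\times}(c) + \chi(\mathcal{V})\chi(c)\bigr)$, so that
\[
\Sigma = \tfrac12 \sum_{c \in \kappa}\chi(c+\mathcal{U}) + \tfrac12\,\chi(\mathcal{V})\sum_{c\in\kappa}\chi(c)\chi(c+\mathcal{U}) - \tfrac12\chi(\mathcal{U}),
\]
where the last term subtracts the $c = 0$ contribution that the indicator formula over-counts. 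The first full sum $\sum_{c}\chi(c+\mathcal{U}) = 0$. The second is the classical Jacobi-type sum $\sum_{c}\chi(c)\chi(c+\mathcal{U})$, which for $\mathcal{U} \neq 0$ equals $-1$ (substitute $c = \mathcal{U}d$ to get $\chi(\mathcal{U})^2\sum_d \chi(d)\chi(d+1) = \sum_{d}\chi(d)\chi(d+1) = -1$, a standard evaluation via $\chi(d)\chi(d+1) = \chi(d^2)\chi(1+d^{-1}) = \chi(1+d^{-1})$ for $d \neq 0$). Hence $\Sigma = -\tfrac12\chi(\mathcal{V}) - \tfrac12\chi(\mathcal{U})$, giving $N = \tfrac{f-1}{2} - \tfrac12\bigl(\chi(\mathcal{U}) + \chi(\mathcal{V})\bigr)$.

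Finally I would assemble the answer. Since $\chi(-1) = 1$ iff $-1 \in (\kappa^\times)^2$, one checks $\chi(\mathcal{U}) + \chi(\mathcal{V}) = \chi(\mathcal{U})(1 + \chi(\mathcal{U}\mathcal{V})) = \chi(\mathcal{U})(1 + \chi(-1)\chi(-\mathcal{U}\mathcal{V}))$; a short case analysis on whether $-\mathcal{V}$ is a square (equivalently whether exactly one of $\mathcal{U}, -\mathcal{U}\mathcal{V}$ is a square, as noted after the lemma in Equation (\ref{eq:Sab_b-d2/2=l_notsquare_+})) then has to be reconciled with the $s=0$ contribution. The total count is $N$ plus the $s = 0$ term $\tfrac12(1 + \chi(\mathcal{V}))$ — wait, the earlier reduction already absorbed $s = 0$ and $s \neq 0$ into a single count of valid $s$, so I would instead recount directly: total $= \#\{s : \mathcal{U}s^2 + \mathcal{V} \in (\kappa^\times)^2\}$, and applying $\#\{s : \mathcal{U}s^2 = v\} = 1 + \chi(v\mathcal{U})$ summed over $v$ with $v + $ shift — concretely $\sum_{w \in (\kappa^\times)^2}(1 + \chi((w - \mathcal{V})\mathcal{U}))$, which runs the same character-sum machinery and lands on the three stated values. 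The main obstacle is bookkeeping the $c = 0$ (resp. $s = 0$) boundary terms consistently and matching the three character-parity regimes — $-\mathcal{V}$ a nonsquare; $\mathcal{U}$ and $-\mathcal{U}\mathcal{V}$ both squares; both nonsquares — to the cases $(f-1)/2$, $(f-3)/2$, $(f+1)/2$; everything else is the standard Gauss/Jacobi sum evaluation over a finite field.
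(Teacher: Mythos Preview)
Your character-sum approach is correct and will yield the stated formula once the boundary terms are tracked carefully, but it is genuinely different from the paper's argument. The paper observes that $P_s(t)=t^2-\mathcal{U}s^2$ is the norm form of $\kappa(\sqrt{\mathcal{U}})/\kappa$, so $\#\{(t,s)\in\kappa\times\kappa: P_s(t)=\mathcal{V}\}=\#N^{-1}(\mathcal{V})$ is either $f-1$ or $f+1$ according to whether $\mathcal{U}$ is a square; one then subtracts the $t=0$ contribution (which is $2$ or $0$ according to whether $-\mathcal{U}\mathcal{V}$ is a square) and divides by $2$, since $(t,s)$ and $(-t,s)$ pair off. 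This gives the three cases in one line with no character sums at all.

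Your route instead reduces to the Jacobi-type evaluation $\sum_{c\in\kappa}\chi(c)\chi(c+\mathcal{U})=-1$ and arrives at $\tfrac{f-1}{2}-\tfrac12(\chi(\mathcal{U})+\chi(-\mathcal{U}\mathcal{V}))$ after the $r=0$ and $s=0$ corrections. This is more computational but has the virtue of not invoking the norm map or the split/inert dichotomy for $\kappa(\sqrt{\mathcal{U}})$; it would generalize more readily if $P_s$ were replaced by a form not coming from a norm. The paper's approach, on the other hand, is structurally cleaner and makes the case split transparent: each of the three answers is visibly $(\text{fiber size} - \text{boundary})/2$. Your ``wait'' moment and the restart at the end reflect exactly the bookkeeping hazard you name; the norm-map argument sidesteps it entirely by counting pairs $(t,s)$ from the start rather than projecting to $s$.
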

\begin{proof}
    If $(t,s) \in \kappa^\times \times \kappa$ satisfies that $P_s(t) = \mathcal{V}$, then so does $(-t,s)$.
    Here $t \neq -t$ since $p$ is odd and $t \in \kappa^\times$.
    This yields that
 \[   \#\{s \in \kappa \mid P_s(t)=\mathcal{V}  \textit{ has a solution in $\kappa^{\times}$}\}
    = \frac{1}{2}\#\{(t,s) \in \kappa^\times \times \kappa \mid P_s(t) = \mathcal{V}\}.
   \]
    Thus it suffices to compute the right hand side of the above equation.
   Let $N : \kappa(\sqrt{\mathcal{U}}) \to \kappa$ be the norm map.
   Since $N(t + s\sqrt{\mathcal{U}}) = P_s(t)$ and $N$ is surjective, we have that
    \begin{equation}\label{eq:4.4.2}
    \#\{(t,s) \in \kappa \times \kappa \mid P_s(t) = \mathcal{V}\} = \# N^{-1}(\mathcal{V}) = 
    \begin{cases}
        f-1 &\textit{if }                                      \mathcal{U} \in (\kappa^\times)^2; \\
        f+1 &\textit{if } \mathcal{U} \notin (\kappa^\times)^2.
      \end{cases}
    \end{equation}
    Here  we use the fact that $\kappa(\sqrt{\mathcal{U}})$ is a field extension of $\kappa$ if and only if $\mathcal{U} \not\in (\kappa^\times)^2$.
Since $$\{(t,s) \in \kappa \times \kappa \mid P_s(t) = \mathcal{V}\}=\{(t,s) \in \kappa^\times \times \kappa \mid P_s(t) = \mathcal{V}\}\bigsqcup \{s \in \kappa \mid P_s(0) = \mathcal{V}\},$$
it suffices to compute the cardinality of the last set.    
    Here $P_s(0) = \mathcal{V}$ if and only if $s^2 = -\mathcal{U}^{-1}\mathcal{V}$.
    Then we easily have the following formulas:
    \[
\#\{s \in \kappa \mid P_s(0) = \mathcal{V}\}=        \begin{cases}
        2 &\textit{if }       -\mathcal{U}\mathcal{V}\in (\kappa^\times)^2; \\
        0 &\textit{if } -\mathcal{U}\mathcal{V}\not\in (\kappa^\times)^2.
      \end{cases}
    \]
    The subtraction of the above from Equation (\ref{eq:4.4.2}), divided by $2$, then yields the desired formula. 
\end{proof}

\subsection{Explicit values of  coefficients of the Siegel series}

In order to use Corollaries  \ref{cor:coeff_n0=n-1}-\ref{cor:coeff_n0=n-2} based on Proposition \ref{prop:Sab}, we  need to reinterpret $\mathfrak{e}_B$ and $\xi_B$ in terms of $u_i, v_i$, and $d_i$.
By Equations (\ref{eq:gke}) and (\ref{eq:gk12}), we have that
\[
\mathfrak{e}_B
= \begin{cases}
    2 \lfloor (d_1+d_2)/2 \rfloor &\textit{if } n \textit{ is even}; \\
    d_1+d_2 &\textit{if } n \textit{ is odd}. 
\end{cases}
\]
When $n$ is even, \cite[Proposition 5.5]{Cas11} yields that
  \begin{equation}\label{eq:xi_B}
  \xi_B =
  \begin{cases}
    ((-1)^{\frac{n}{2}} {u}_1 \cdots{u}_{n-2}v_1v_2 ,\pi) &\textit{if } d_1+d_2 \textit{ is even}; \\
    0 &\textit{if } d_1+d_2 \textit{ is odd}.
  \end{cases}
  \end{equation}
  Note that $n_0 = n-1$ if and only if $d_1 = 0$.
  
We finally obtain the following theorem by substituting formulas in Proposition \ref{prop:Sab} to Corollaries  \ref{cor:coeff_n0=n-1}-\ref{cor:coeff_n0=n-2}:

\begin{Thm}\label{thm45}
  The coefficients $c_t$ of $\tilde{F}_L(X)$ are as follows:
  \begin{enumerate}
      \item Suppose that $n_0 = n-1$.

      \begin{enumerate}
      \item If $n$ is even, then we have that
      \[
      \begin{cases}
        c_{2s+1} =
        -f^{-1/2}\xi_B
        &\textit{if } 0 \leq s<\frac{\mathfrak{e}_B-1}{2}; \\ 
        c_{2s} = 1
        &\textit{if } 0 \leq s<\frac{\mathfrak{e}_B}{2}; \\
        c_{\mathfrak{e}_B} = 1.
      \end{cases}
      \]
      \item If $n$ is odd, then we have that
      \[
      \begin{cases}
        c_{2s+1} = ((-1)^{\frac{n-1}{2}} {u}_1 \cdots{u}_{n-2} {v}_1,\pi)
        &\textit{if } 0 \leq s<\frac{\mathfrak{e}_B-1}{2}; \\
        c_{2s} = 1
        &\textit{if } 0 \leq s<\frac{\mathfrak{e}_B}{2}; \\
        c_{\mathfrak{e}_B} = \begin{cases}
            1 &\textit{if } \mathfrak{e}_B \textit{ is even}; \\
            ((-1)^{\frac{n-1}{2}} {u}_1 \cdots{u}_{n-2} {v}_1,\pi)  &\textit{if } \mathfrak{e}_B \textit{ is odd}.
        \end{cases}
      \end{cases}
      \]
      \end{enumerate}

        \item Suppose that $n_0 = n-2$.
        \begin{enumerate}
        
            \item If $n$ is even, then we have that
        \begin{equation*}
        \begin{aligned}
        &c_{2s+1} =
        \begin{cases}
        \left(((-1)^{\frac{n-2}{2}} {u}_1 \cdots{u}_{n-2} ,\pi) f^{\frac{1}{2}} -\xi_Bf^{-\frac{1}{2}} \right)\left(\sum\limits_{i = 0}^s f^{i}\right)
        &\textit{if }   0 \leq s<\frac{d_1}{2}; \\
        ((-1)^{\frac{n-2}{2}} {u}_1 \cdots{u}_{n-2} ,\pi))\cdot \sum\limits_{i = 0}^{\lfloor \frac{d_1-1}{2} \rfloor} f^{i+\frac{1}{2}} -  \xi_B \sum\limits_{i = 0}^{\lfloor \frac{d_1}{2} \rfloor} f^{i-\frac{1}{2}}
        &\textit{if } \frac{d_1}{2} \leq s < \frac{d_2}{2}; \\
        \left(((-1)^{\frac{n-2}{2}} {u}_1 \cdots{u}_{n-2} ,\pi) f^{\frac{1}{2}} -\xi_Bf^{-\frac{1}{2}} \right)\left(\sum\limits_{i = 0}^{\frac{\mathfrak{e}_B}{2}-s-1}f^i\right)  &\textit{if } \frac{d_2}{2} \leq s < \frac{\mathfrak{e}_B-1}{2} ,
        \end{cases} \\
        &c_{2s} =
        \begin{cases}
        \sum\limits_{i = 0}^s f^{i} - \xi_B((-1)^{\frac{n-2}{2}} {u}_1 \cdots{u}_{n-2} ,\pi))\left(\sum\limits_{i = 0}^{s-1} f^{i}\right) &\textit{if }   0 \leq s< \frac{d_1}{2}; \\
        \sum\limits_{i = 0}^{\lfloor \frac{d_1}{2} \rfloor} f^i - \xi_B((-1)^{\frac{n-2}{2}} {u}_1 \cdots{u}_{n-2} ,\pi))  \sum\limits_{i = 0}^{\lfloor \frac{d_1-1}{2} \rfloor} f^i  &\textit{if } \frac{d_1}{2} \leq s < \frac{d_2}{2}; \\
            \sum\limits_{i = 0}^{\frac{\mathfrak{e}_B}{2}-s} f^i -\xi_B((-1)^{\frac{n-2}{2}} {u}_1 \cdots{u}_{n-2} ,\pi)) \sum\limits_{i = 0}^{\frac{\mathfrak{e}_B}{2}-s-1} f^i
            &\textit{if } \frac{d_2}{2} \leq s <\frac{\mathfrak{e}_B}{2},
        \end{cases}\\
        &c_{\mathfrak{e}_B} = 1.
        \end{aligned}
        \end{equation*}

      \item If $n$ is odd, then we have that
      \begin{equation}
            c_{2s+1}
            = \begin{cases}
              0 &\textit{if } 0 \leq s <d_1/2; \\
              ((-1)^{\frac{n-1}{2}} {u}_1 \cdots{u}_{n-2} {v}_1,\pi)\delta(d_1)f^{\frac{d_1}{2}} &\textit{if } \frac{d_1}{2} \leq s < \frac{d_2}{2}; \\
              0
              &\textit{if } \frac{d_2}{2} \leq s < \frac{\mathfrak{e}_B-1}{2},
              \,d_1, d_2 \textit{ are even}; \\
              0
              &\textit{if } \frac{d_2}{2} \leq s < \frac{\mathfrak{e}_B-1}{2},
              \,d_1, d_2 \textit{ are odd}; \\
              ((-1)^{\frac{n-1}{2}} {u}_1 \cdots{u}_{n-2} {v}_1,\pi)f^{\frac{\mathfrak{e}_B-1}{2} -s}  
              &\textit{if } \frac{d_2}{2} \leq s < \frac{\mathfrak{e}_B-1}{2},
              d_1 \textit{ is even, } d_2 \textit{ is odd}; \\
              ((-1)^{\frac{n-1}{2}} {u}_1 \cdots{u}_{n-2} {v}_2,\pi)f^{\frac{\mathfrak{e}_B-1}{2} -s}       
              &\textit{if } \frac{d_2}{2} \leq s < \frac{\mathfrak{e}_B-1}{2}, 
              \,d_1 \textit{ is odd, } d_2 \textit{ is even}, \\
            \end{cases}
    \end{equation}
    \begin{equation}
            c_{2s}
            =
            \begin{cases}
               f^s &\textit{if } 0 \leq s <\frac{d_1}{2}; \\
              \delta(d_1)f^{d_1/2} &\textit{if } \frac{d_1}{2} \leq s < \frac{d_2}{2}; \\
              f^{\frac{\mathfrak{e}_B}{2}-s}
              &\textit{if } \frac{d_2}{2} \leq s < \frac{\mathfrak{e_B}}{2}, \, d_1, d_2 \textit{ are even}; \\
              (-v_1v_2,\pi)f^{\frac{\mathfrak{e}_B}{2}-s}
              &\textit{if } \frac{d_2}{2} \leq s < \frac{\mathfrak{e}_B}{2}, \,
              d_1, d_2 \textit{ are odd}; \\ 
              0
              &\textit{if } \frac{d_2}{2} \leq s < \frac{\mathfrak{e}_b}{2},
              d_1 \textit{ is even}, d_2 \textit{ is odd}; \\
              0
              &\textit{if } \frac{d_2}{2} \leq s < \frac{\mathfrak{e}_B}{2},
              d_1 \textit{ is odd}, d_2 \textit{ is even},
             \end{cases}          
        \end{equation}
        \[
        c_{\mathfrak{e}_B} = 
        \begin{cases}
          1
          &\textit{if } d_1, d_2 \textit{ are even}; \\
          (-v_1v_2,\pi)
          &\textit{if } d_1, d_2 \textit{ are odd}; \\
              ((-1)^{\frac{n-1}{2}} {u}_1 \cdots{u}_{n-2} {v}_1,\pi)
              &\textit{if }d_1 \textit{ is even, } d_2 \textit{ is odd}; \\
              ((-1)^{\frac{n-1}{2}} {u}_1 \cdots{u}_{n-2} {v}_2,\pi)
              &\textit{if }d_1 \textit{ is odd, } d_2 \textit{ is even}. \\
        \end{cases}
        \]
  \end{enumerate}
  \end{enumerate}
\end{Thm}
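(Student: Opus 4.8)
The plan is to carry out a direct substitution. By Corollary \ref{cor:coeff_n0=n-1} (when $n_0=n-1$) and Corollary \ref{cor:coeff_n0=n-2} (when $n_0=n-2$), each coefficient $c_t$ is already written as an explicit $\mathbb{Q}[f^{1/2}]$-linear combination of the counts $\#\mathcal{S}_{(L,a^\pm,b)}$ for $a\in\{n-2,n-1,n\}$ and of the signed counts $\chi((n-2)^\pm)\#\mathcal{S}_{(L,(n-2)^\pm,b)}$ and $\chi((n-1)^\pm)\#\mathcal{S}_{(L,(n-1)^\pm,b)}$. Proposition \ref{prop:Sab} gives closed formulas for precisely these quantities in terms of $u_i,v_j,d_1,d_2$, and the paragraph preceding the statement expresses $\mathfrak{e}_B$ and $\xi_B$ in the same data, together with the equivalence $n_0=n-1\iff d_1=0$. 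So the whole proof amounts to inserting Proposition \ref{prop:Sab} into Corollaries \ref{cor:coeff_n0=n-1}--\ref{cor:coeff_n0=n-2} and simplifying.

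Concretely, I would first dispose of the case $n_0=n-1$, where $d_1=0$ and the formulas of Proposition \ref{prop:Sab} collapse: $\#\mathcal{S}_{(L,(n-2)^\pm,b)}=0$ for every $b$, $\#\mathcal{S}_{(L,(n-1)^\pm,b)}=1$ for $0\le b<d_2/2$ and $0$ otherwise, and $\#\mathcal{S}_{(L,n^\pm,b)}$ is supported at $b=d_2/2$ (forcing $d_2$ even), where its value is read off from Proposition \ref{prop:Sab}. Feeding this into Corollary \ref{cor:coeff_n0=n-1}, and using $\mathfrak{e}_B=2\lfloor d_2/2\rfloor$ for $n$ even resp.\ $\mathfrak{e}_B=d_2$ for $n$ odd, together with $\xi_B=((-1)^{n/2}u_1\cdots u_{n-2}v_1v_2,\pi)$ when $d_2$ is even, gives part (1). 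The only care needed is to match the parity hypothesis of Corollary \ref{cor:coeff_n0=n-1} ($|\mathrm{GK}(L)|$ even versus odd) against the trichotomy governing $\#\mathcal{S}_{(L,n^\pm,\mathfrak{e}_B/2)}$ in Proposition \ref{prop:Sab}; since $|\mathrm{GK}(L)|=d_2$ here, both reduce to the same dichotomy ``$d_2$ even/odd''.

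For $n_0=n-2$ one has $0<d_1\le d_2$ and no count vanishes identically, so I would split the half-index range into the intervals $0\le s<d_1/2$, $d_1/2\le s<d_2/2$, $d_2/2\le s<\mathfrak{e}_B/2$ dictated by Proposition \ref{prop:Sab}, together with the terminal value $t=\mathfrak{e}_B$, and inside the last interval split further on the parities of $d_1,d_2$ (and, for $n$ odd, on whether $-v_1v_2$ is a square, which decides whether $v_1$ or $v_2$ survives in the relevant Hilbert symbol). In each window the substitution reduces to collapsing a few finite geometric sums $\sum f^i$ and recombining Hilbert symbols; for $n$ even the cross-term $\xi_B\,\chi((n-2)^\pm)\#\mathcal{S}_{(L,(n-2)^\pm,s-1)}$ of Corollary \ref{cor:coeff_n0=n-2} is what produces the $\xi_B\,((-1)^{(n-2)/2}u_1\cdots u_{n-2},\pi)$ factors, while for $n$ odd one quotes Proposition \ref{prop:Sab}.(3) directly. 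One then reads off the asserted formulas.

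I expect the main obstacle to be purely organisational: there are many intervals and parity branches, each with its own boundary convention (for instance whether $2s$ or $2s+1$ is allowed to equal $\mathfrak{e}_B$, bearing in mind that $\mathfrak{e}_B$ is forced even when $n$ is even but may be odd when $n$ is odd), and Proposition \ref{prop:Sab} and Corollaries \ref{cor:coeff_n0=n-1}--\ref{cor:coeff_n0=n-2} index their cases along slightly different fault lines; so the bulk of the work is checking that the subdivisions line up and that nothing is lost or double-counted at the endpoints. As an independent consistency check, the resulting formulas must obey $c_t=\zeta_B\,c_{\mathfrak{e}_B-t}$ by Theorem \ref{thm:fe_coeff}, and any bookkeeping error would surface as a violation of this symmetry.
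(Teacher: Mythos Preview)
Your proposal is correct and matches the paper's approach exactly: the paper's entire proof is the single sentence ``We finally obtain the following theorem by substituting formulas in Proposition~\ref{prop:Sab} to Corollaries~\ref{cor:coeff_n0=n-1}--\ref{cor:coeff_n0=n-2}'', and your strategy spells out precisely that substitution together with the case bookkeeping it entails. One caveat: your suggested consistency check via $c_t=\zeta_B\,c_{\mathfrak{e}_B-t}$ is fine heuristically, but since the paper's next result (Corollary~\ref{cor:functionaleq}) derives that identity \emph{from} this theorem, you should not invoke it as part of the argument itself.
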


In order to prove the functional equation which is stated in Theorem \ref{thm:fe_coeff}, we need to reinterpret $\zeta_B$.
Recall that $\zeta_B = \eta_B$ or $1$ according as $n$ is odd or even respectively (cf. Theorem \ref{fe}).
    When $n$ is odd, \cite[Definition 0.4]{IK1} and \cite[Example 63:12]{MT00} yield that
  \begin{equation*}\label{eq:eta_B}
  \eta_B = 
  \begin{cases}
  1 &\textit{if } d_1, d_2 \textit{ are even}; \\
  (-{v}_1 {v}_2,\pi) &\textit{if }d_1, d_2 \textit{ are odd}; \\
  ((-1)^{\frac{n-1}{2}} {u}_1 \cdots{u}_{n-2}{v}_1,\pi) &\textit{if } d_1  \textit{ is even, } d_2 \textit{ is odd}; \\
  ((-1)^{\frac{n-1}{2}} {u}_1 \cdots {u}_{n-2} {v}_2,\pi) &\textit{if } d_1  \textit{ is odd, } d_2 \textit{ is even}.
  \end{cases}
  \end{equation*}
  This, together with  Theorem \ref{thm45}, yields the functional equation of the Siegel series stated in  Theorem \ref{thm:fe_coeff} as follows:
      \begin{Cor}\label{cor:functionaleq}
  We have that
  \[
  c_t = \zeta_B \cdot  c_{\mathfrak{e}_B-t} \textit{ for } 0 \leq t \leq \mathfrak{e}_B.
  \]
\end{Cor}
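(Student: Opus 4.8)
The plan is to verify the identity $c_t = \zeta_B \cdot c_{\mathfrak{e}_B - t}$ directly by comparing the explicit formulas for the coefficients listed in Theorem \ref{thm45}, case by case according to the parity of $n$, the value of $n_0$ (either $n-1$ or $n-2$), and the parities of $d_1$ and $d_2$. Since Theorem \ref{thm45} already provides closed-form expressions for every $c_t$ in terms of $f$, the Hilbert symbols attached to $u_i, v_j$, and the invariants $d_1, d_2, \mathfrak{e}_B, \xi_B$, the functional equation becomes a finite bookkeeping verification once $\zeta_B$ is expressed in the same language. The reinterpretation of $\zeta_B$ — namely $\zeta_B = 1$ when $n$ is even, and $\zeta_B = \eta_B$ with $\eta_B$ given by the displayed formula when $n$ is odd — is exactly what is recorded immediately before the corollary statement, so that input is already in hand.

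First I would treat the case $n$ even, $n_0 = n-1$: here $\zeta_B = 1$, and one must check $c_{2s+1} = c_{\mathfrak{e}_B - 2s - 1}$ and $c_{2s} = c_{\mathfrak{e}_B - 2s}$. Since in that sub-case $c_{2s+1} = -f^{-1/2}\xi_B$ is constant in $s$ over its range and $c_{2s} = 1$ is constant as well (and $c_{\mathfrak{e}_B} = 1$), the symmetry is immediate once one checks that $\mathfrak{e}_B - 2s - 1$ lands in the allowed odd range and $\mathfrak{e}_B - 2s$ in the allowed even range; note $\mathfrak{e}_B$ is even here by the formula $\mathfrak{e}_B = 2\lfloor (d_1+d_2)/2\rfloor$ with $d_1 = 0$. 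Next, for $n$ odd, $n_0 = n-1$, again $c_{2s} = 1$ and $c_{2s+1} = ((-1)^{(n-1)/2}u_1\cdots u_{n-2}v_1,\pi)$ are constant, so one matches them against $c_{\mathfrak{e}_B - t}$; the only subtlety is the endpoint $c_{\mathfrak{e}_B}$, which equals $1$ or the Hilbert symbol according as $\mathfrak{e}_B = d_1 + d_2 = d_2$ is even or odd, and this is exactly consistent with $\zeta_B = \eta_B$ (which in this $d_1 = 0$ situation reduces to $1$ if $d_2$ even, to $((-1)^{(n-1)/2}u_1\cdots u_{n-2}v_1,\pi)$ if $d_2$ odd). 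I would then do the same for $n_0 = n-2$: the even-$n$ case has $c_{\mathfrak{e}_B} = 1 = \zeta_B$, and one pairs the three $s$-ranges of $c_{2s}$ and $c_{2s+1}$ with each other — the ranges $[0,d_1/2)$ and $[d_2/2,(\mathfrak{e}_B-1)/2)$ are interchanged under $s \mapsto \mathfrak{e}_B/2 - 1 - s$ (odd indices) or $s \mapsto \mathfrak{e}_B/2 - s$ (even indices), while the middle range $[d_1/2,d_2/2)$ is (up to the endpoint conventions) self-paired, and the geometric-sum expressions $\sum_{i=0}^{s} f^i$ versus $\sum_{i=0}^{\mathfrak{e}_B/2 - s - 1} f^i$ etc. match term by term because $\mathfrak{e}_B/2 = \lfloor (d_1+d_2)/2 \rfloor$. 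The odd-$n$, $n_0 = n-2$ case is the most elaborate: there one splits into the four parity combinations of $(d_1,d_2)$, uses $\eta_B$ as displayed, and checks that the six-line formulas for $c_{2s}$ and $c_{2s+1}$ together with $c_{\mathfrak{e}_B}$ satisfy $c_t = \eta_B c_{\mathfrak{e}_B - t}$; the key algebraic facts needed are $(-v_1v_2,\pi)^2 = 1$, multiplicativity of the Hilbert symbol in the first argument, and the relation $((-1)^{(n-1)/2}u_1\cdots u_{n-2}v_1,\pi) \cdot ((-1)^{(n-1)/2}u_1\cdots u_{n-2}v_2,\pi) = (v_1 v_2,\pi) = (-v_1v_2,\pi)$ when $-1$ is appropriately accounted for, which converts $\eta_B$-twisted reflection of the $v_1$-symbol into the $v_2$-symbol and vice versa.

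The main obstacle I anticipate is not conceptual but the careful handling of the floor functions, the boundary indices, and the interpretation conventions for $\mathcal{S}_{(L,a^\pm,b)}$ when $b$ sits exactly at $d_1/2$, $d_2/2$, or $(d_1+d_2)/2$ — these are precisely the places where the piecewise formulas of Theorem \ref{thm45} switch branches, and the reflection $s \mapsto \mathfrak{e}_B/2 - s$ maps boundary values to boundary values, so one must confirm that the two sides agree there and not merely on the interiors of the ranges. In particular, when $d_1 + d_2$ is odd (possible only for $n$ odd), $\mathfrak{e}_B = d_1 + d_2$ is odd, so the top coefficient is $c_{\mathfrak{e}_B}$ with $\mathfrak{e}_B$ odd and the middle of the polynomial is a half-integer; here one checks $c_{2s+1} = \eta_B c_{\mathfrak{e}_B - 2s - 1} = \eta_B c_{2(\frac{\mathfrak{e}_B - 1}{2} - s)}$, i.e. an odd coefficient is paired with an even one, and this cross-parity pairing is where the $\delta(d_1), \delta(d_2)$ factors and the switch between the $v_1$- and $v_2$-Hilbert symbols must line up exactly. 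Once these endpoint checks are done, the corollary follows by assembling the verified case-by-case equalities, so I would present the proof as: recall the formula for $\zeta_B$; reduce to checking Theorem \ref{thm:fe_coeff}'s identity on the explicit coefficients of Theorem \ref{thm45}; then dispatch the (small, finite) list of cases, spelling out only the odd-$n$, $n_0 = n-2$ case in full and indicating that the others are analogous and simpler.
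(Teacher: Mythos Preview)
Your proposal is correct and follows essentially the same approach as the paper: the paper simply observes that the explicit formula for $\eta_B$ displayed immediately before the corollary, combined with the explicit coefficients in Theorem \ref{thm45}, yields $c_t = \zeta_B\, c_{\mathfrak{e}_B - t}$ by direct inspection. Your write-up spells out the case-by-case bookkeeping (range matching under $s \mapsto \mathfrak{e}_B/2 - s$, endpoint checks, parity splits) in more detail than the paper does, but the underlying argument is identical.
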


\bibliographystyle{alpha}
\bibliography{References}

\end{document}